\newtheorem{Ass}{Assumption}
\newtheorem{Thm}{Theorem}
\newtheorem{Lem}{Lemma}
\newtheorem{Cor}{Corollary}
\newtheorem{Fact}{Fact}
\newtheorem{Rem}{Remark}
\newcommand{\norm}[1]{\Vert{#1}\Vert}
\newcommand{\abs}[1]{\vert{#1}\vert}
\newcommand{\mb}[1]{{\mathbf{#1} }}
\newcommand{\mbb}[1]{{\mathbb{#1} }}
\newcommand{\mc}[1]{{\mathcal{#1} }}
\newcommand{\bs}[1]{{\boldsymbol{#1}}}
\newcommand*{\tran}{^{\mkern-1.5mu\mathsf{T}}}
\newcommand*{\defeq}{\overset{\Delta}{=}}
\icmltitlerunning{On the Linear Speedup Analysis of Communication Efficient Momentum SGD for Distributed Non-Convex Optimization}
\begin{document}
\twocolumn[
\icmltitle{On the Linear Speedup Analysis of Communication Efficient Momentum SGD for Distributed Non-Convex Optimization}
 
\icmlsetsymbol{equal}{*}           
\begin{icmlauthorlist}
\icmlauthor{Hao Yu}{alibaba}
\icmlauthor{Rong Jin}{alibaba}
\icmlauthor{Sen Yang}{alibaba}
\end{icmlauthorlist}

\icmlaffiliation{alibaba}{Machine Intelligence Technology Lab, Alibaba Group (U.S.) Inc., Bellevue, WA}
\icmlcorrespondingauthor{Hao Yu}{eeyuhao@gmail.com}
\icmlkeywords{Distributed Stochastic Optimization}

\vskip 0.3in
]

\printAffiliationsAndNotice{}

\begin{abstract}
Recent developments on large-scale distributed machine learning applications, e.g., deep neural networks, benefit enormously from the advances in distributed non-convex optimization techniques, e.g., distributed Stochastic Gradient Descent (SGD).  A series of recent works study the linear speedup property of distributed SGD variants with reduced communication. The linear speedup property enable us to scale out the computing capability by adding more computing nodes into our system. The reduced communication complexity is desirable since communication overhead is often the performance bottleneck in distributed systems. Recently, momentum methods are more and more widely adopted in training machine learning models and can often converge faster and generalize better.  For example, many practitioners use distributed SGD with momentum to train deep neural networks with big data. However,  it remains unclear whether any distributed momentum SGD possesses the same linear speedup property as distributed SGD and has reduced communication complexity.  This paper fills the gap by considering a distributed communication efficient momentum SGD method and proving its linear speedup property. 
\end{abstract}

\section{Introduction}
Consider distributed non-convex optimization scenarios where $N$ workers jointly 
solve the following consensus optimization problem:
{\small
\begin{align}
\min_{ \mb{x}\in \mbb{R}^{m}}  \quad f(\mb{x}) =\frac{1}{N}\sum_{i=1}^N \underbrace{\mathbb{E}_{\xi_i \sim \mc{D}_i} [F_i(\mb{x}; \xi_i)]}_{\defeq f_i(\mb{x})} \label{eq:sto-opt}
\end{align}
}%
where $f_i(\mb{x}) \overset{\Delta}{=} \mathbb{E}_{\xi_i \sim \mc{D}_i} [F_i(\mb{x}; \xi_i)]$ are smooth non-convex functions with possibly different distributions $\mc{D}_i$.   Problem \eqref{eq:sto-opt} is particularly important in deep learning since it captures data parallelism for training deep neural networks. In deep learning with data parallelism, each $F_i(\cdot; \cdot) = F(\cdot; \cdot) $ represents the common deep neural network to be jointly trained and each $\mc{D}_i$ represents the distribution of the local data set accessed by worker $i$.  The scenario where each $\mc{D}_i$ are different also captures the {\bf federated learning} setting recently proposed in \cite{McMahan17AISTATS} where mobile clients with private local training data and slow intermittent network connections cooperatively train a high-quality centralized model.

Stochastic Gradient Descent (SGD) (and its momentum variants) have been the dominating methodology for solving machine learning problem. For large-scale distributed machine learning problems, such as training deep neural networks, a parallel version of SGD, known as parallel mini-batch SGD, is widely adopted \cite{Dean12NIPS, Dekel12JMLR, Li14NIPS}. With $N$ parallel workers, parallel mini-batch SGD can solve problem \eqref{eq:sto-opt} with $O(1/\sqrt{NT})$ convergence, which is $N$ times faster\footnote{If an algorithm has $O(1/\sqrt{T})$ convergence, then it takes $1/\epsilon^2$ iterations to attain an $O(\epsilon)$ accurate solution. Similarly, if another algorithm has $O(1/\sqrt{NT})$ convergence, then it takes $1/(N\epsilon^2)$ iterations, which is $N$ times smaller than $1/\epsilon^2$, to attain an $O(\epsilon)$ accurate solution. In this sense, the second algorithm is $N$ times faster than the first one.} than the $O(1/\sqrt{T})$ convergence attained by SGD over a single node \cite{GhadimiLan13SIOPT, Lian15NIPS}. Obviously, such linear speedup with respect to (w.r.t.) number of workers is desired in distributed training as it implies perfect computation scalability by increasing the number of used workers. However, such linear speedup is difficult to harvest in practice because the classical parallel mini-batch SGD requires all workers to synchronize local gradients or models at {\bf every} iteration such that inter-node communication cost becomes the performance bottleneck. To eliminate potential communication bottlenecks, such as high latency or low bandwidths, in computing infrastructures, many distributed SGD variants have been recently proposed. For example, \cite{Lian17NIPS, Jiang17NIPS,Assran18ArXiv} consider decentralized parallel SGD where global gradient aggregations used in the classical parallel mini-batch SGD are replaced with local aggregations between neighbors. To reduce the communication cost at each iteration, compression or sparsification based parallel SGD are considered in \cite{Seide14Interspeech,Strom15Interspeech,Aji17EMNLP, Wen17NIPS, Alistarh17NIPS}. Recently,  \cite{Yu18ArXivAAAI, WangJoshi18ArXiv, Jiang18NIPS} prove that certain parallel SGD variants that strategically skip communication rounds can achieve the fast $O(1/\sqrt{NT})$ convergence with significantly less communication rounds.  See Table \ref{table:sgd-communication} for a summary on recent works studying distributed SGD with $O(1/\sqrt{NT})$ convergence and reduced communication complexity.

It is worth noting that existing convergence analyses on distributed methods for solving problem \eqref{eq:sto-opt} focus on parallel SGD {\bf without momentum}.  In practice, momentum SGD is more commonly used to train deep neural networks since it often can converge faster and generalize better \cite{Krizhevsky12NIPS, Sutskever13ICML,Yan18IJCAI}.  For example, momentum SGD is suggested for training ResNet for image classification tasks to obtain the best test accuracy \cite{He16CVPR}. See Figure \ref{fig:1} for the comparison of test accuracy between training with momentum and training without momentum.\footnote{As observed in Figure \ref{fig:1}, the final test accuracy of momentum SGD is roughly $2.5\%$ better than that of vanilla SGD (without momentum) over CIFAR10. In practice, people usually use momentum SGD to train ResNet in both single GPU and multiple GPU scenarios. Some practitioners conjecture that it is possible to avoid the performance degradation of vanilla SGD if its hyper-parameters are well tuned. However, even if this conjecture is true, the hyper-parameter tuning can be extremely time-consuming. } In fact, while previous works \cite{Lian17NIPS, Stich18ArXiv, Yu18ArXivAAAI, Jiang18NIPS} prove that distributed vanilla SGD (without momentum) can train deep neural networks with $O(1/\sqrt{NT})$ convergence using significantly fewer communications rounds, most of their experiments use momentum SGD rather than vanilla SGD (to achieve the targeted test accuracy). In addition, previous empirical works \cite{Chen16ICASSP, Lin18ArXiv} on distributed training for deep neural networks explicitly observe that momentum is necessary to improve the convergence and test accuracy. In this perspective, there is a huge gap between the current practices, i.e., using momentum SGD rather than vanilla SGD in distributed training for deep neural networks, and existing theoretical analyses such as \cite{Yu18ArXivAAAI,  WangJoshi18ArXiv, Jiang18NIPS} studying the convergence rate and communication complexity of SGD  without momentum. Momentum methods are originally proposed by Polyak in \cite{Polyak64} to minimize deterministic strongly convex quadratic functions.  Its convergence rate for deterministic convex optimization, which is not necessarily strongly convex, is established in \cite{Ghadimi14ArXiv}. For non-convex optimization, the convergence for deterministic non-convex optimization is proven in \cite{Zavriev93} and the $O(1/\sqrt{T})$ convergence rate for stochastic non-convex optimization is recently shown in \cite{Yan18IJCAI}. However, to our best knowledge, it remains as an open question: ``{\it Whether any distributed momentum SGD can achieve the same  $O(1/\sqrt{NT})$ convergence, i.e., linear speedup w.r.t. number of workers, with reduced communication complexity as SGD (without momentum) methods in \cite{Yu18ArXivAAAI,  WangJoshi18ArXiv, Jiang18NIPS} ?}"

\vspace{-1.em}
\begin{figure}[h!] 
\centering
\includegraphics[width=0.42\textwidth]{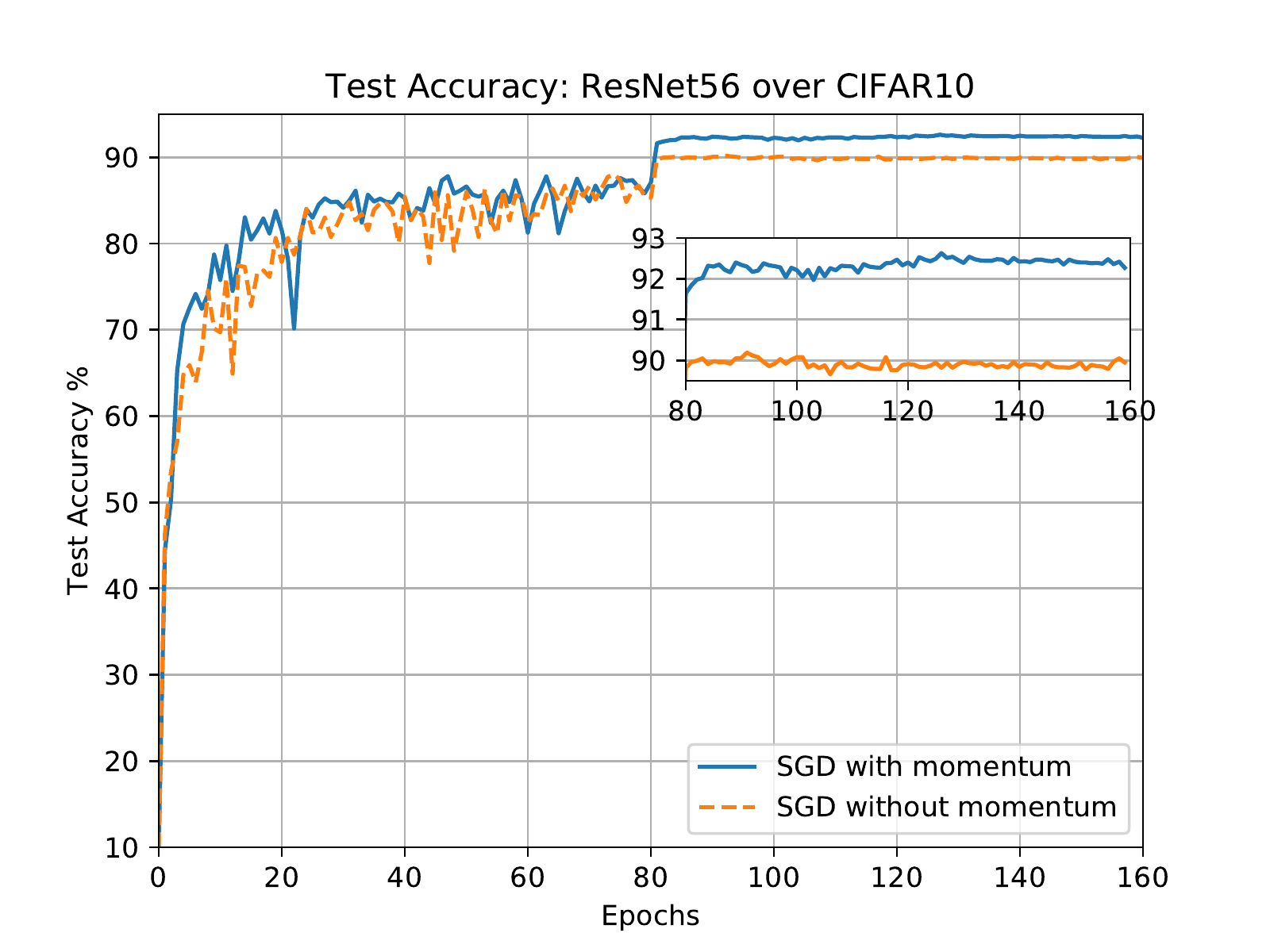}
\caption{Test accuracy performance of ResNet56 over CIFAR10 when trained with SGD with and without momentum on a single GPU using hyper-parameters suggested in \cite{He16CVPR}.}\label{fig:1}
\end{figure}

\begin{table*}[th]
	\caption{Number of communication rounds involved in $T$ iterations of existing distributed SGD {\bf without momentum} with $O(1/\sqrt{NT})$ convergence for {\bf non-convex} stochastic optimization \eqref{eq:sto-opt}. }
	\label{table:sgd-communication}
	%\vskip -0.5in
	\begin{center}
		\begin{small}
			\begin{sc}
				\begin{tabular}{lccc}
					\toprule
					Reference & {\small Identical $f_i(\mb{x})$, i.e.,$\kappa=0$} &  {\small Non-identical $f_i(\mb{x})$, i.e., $\kappa\neq0$}  & {\small Extra Assumptions}\\
					\midrule
					\cite{Lian17NIPS}    & $O(T)$ & $O(T)$& No \\
					\cite{Yu18ArXivAAAI} & $O(N^{3/4}T^{3/4})$ & $O(N^{3/4}T^{3/4})$ & Bounded Gradients\\
					\cite{Jiang18NIPS}    & $O(N^{5/2}T^{1/2})$ &  $O(N^{5/4}T^{3/4})$ & No \\
					\cite{WangJoshi18ArXiv}    &  $O(N^{3/2} T^{1/2})$& Not Applicable&  No       \\
					This Paper &  $O(N^{3/2} T^{1/2})$&$O(N^{3/4}T^{3/4})$ &  No       \\
					\bottomrule
				\end{tabular}
			\end{sc}
		\end{small}
	\end{center}
\end{table*}

{\bf Our Contributions}: In this paper, we considers parallel restarted SGD with momentum (described in Algorithm \ref{alg:prsgd-momentum}), which can be viewed as the momentum extension of parallel restarted SGD, also referred as local SGD, considered in \cite{Stich18ArXiv, Yu18ArXivAAAI}. Such a method is also faithful to the common practice ``model averaging with momentum" used by practitioners for training deep neural networks in \cite{Chen16ICASSP, McMahan17AISTATS, Su18ArXiv, Lin18ArXiv}.  We show that parallel restarted SGD with momentum can solve problem \eqref{eq:sto-opt} with $O(1/\sqrt{NT})$ convergence, i.e., achieving linear speedup w.r.t. number of workers. Moreover,  to achieve the fast $O(1/\sqrt{NT})$ convergence,  $T$ iterations of our algorithm  only requires $O(N^{3/2}T^{1/2})$ communication rounds when all workers can access identical data sets; or $O(N^{3/4}T^{3/4})$ communication rounds when workers access non-identical data sets. To our knowledge, this is the first time that a distributed momentum SGD method for non-convex stochastic optimization is proven to possess the same linear speedup property (with communication reduction) as distributed SGD (without momentum) in \cite{Lian17NIPS, Yu18ArXivAAAI,  WangJoshi18ArXiv, Jiang18NIPS}.

Recall that momentum SGD degrades to vanilla SGD when its momentum coefficient is set $0$. Algorithm \ref{alg:prsgd-momentum} with $\beta = 0$ degrades to the parallel SGD methods (without momentum) considered in \cite{Yu18ArXivAAAI,  WangJoshi18ArXiv, Jiang18NIPS}. Our communication complexity results for parallel SGD without momentum cases also improve the state-of-the-art.  As shown in Table \ref{table:sgd-communication},  the number of required communication rounds shown in this paper is the fewest in both identical training data set case and non-identical data set case.  In particular, this paper relaxes the bounded gradient moment assumption used in \cite{Yu18ArXivAAAI} to a milder bounded variance assumption and reduces the communication complexity for the case with identical training data. Our analysis applies to the distributed training scenarios where workers access non-identical training sets, e.g., training with sharded data or federated learning, that can not be handled in \cite{WangJoshi18ArXiv}.

This paper further considers momentum SGD with decentralized communication and proves that it possess the same linear speedup property as its vanilla SGD (without momentum) counterpart considered in \cite{Lian17NIPS}.

\section{Parallel Restarted SGD with Momentum}

\subsection{Preliminary}

Following the convention in (distributed) stochastic optimization, we assume each worker can independently evaluate unbiased stochastic gradients $\nabla F_i(\mb{x}_i; \xi_i), \xi_i \sim \mathcal{D}_i$ using its own local variable $\mb{x}_i$. Throughout this paper, we have the following standard assumption:  
\begin{Ass}\label{ass:basic} Problem \eqref{eq:sto-opt} satisfies the following:
\vspace{-0.5em}
\begin{enumerate}
\item {\bf Smoothness:} Each function $f_i(\mb{x})$ in problem \eqref{eq:sto-opt} is smooth with modulus $L$.
\vspace{-0.5em}
\item {\bf Bounded variances:} There exist $\sigma >0$ and $\kappa > 0$ such that 
\begin{align}
\mathbb{E}_{\xi \sim \mc{D}_i} [\norm{\nabla F_i(\mb{x};\xi) - \nabla f_i(\mb{x})}^2 ] &\leq \sigma^2, \forall i, \forall \mb{x}\\
\frac{1}{N} \sum_{i=1}^N \norm{\nabla f_i(\mb{x}) - \nabla f(\mb{x})}^2 &\leq \kappa^2, \forall \mb{x}
\end{align}
\end{enumerate}
\end{Ass}
\vspace{-0.3em}
Note that $\sigma^2$ in Assumption \ref{ass:basic} quantifies the variance of stochastic gradients at local worker, and $\kappa^2$ quantifies the deviations between each worker's local objective function $f_i(\mb{x})$. For distributed training of neural networks, if all workers can access the same data set, then $\kappa =0$. Assumption \ref{ass:basic} was previously used in \cite{Lian17NIPS, Wen17NIPS, Alistarh17NIPS, Jiang18NIPS}. The bounded variance assumption in Assumption \ref{ass:basic} is milder than the bounded second moment assumption used in \cite{Stich18ArXiv, Yu18ArXivAAAI}.

Recall that if function $f(\cdot)$ is smooth with modulus $L$, then we have the key property that $f(\mb{x}) \leq f(\mb{y}) + \langle \nabla f(\mb{y}), \mb{x} - \mb{y}\rangle + \frac{L}{2} \norm{\mb{y}-\mb{x}}^2, \forall \mb{x}, \mb{y}$ where $\langle\cdot, \cdot \rangle$ represents the inner product of two vectors. Another two useful properties implied by Assumption \ref{ass:basic} are summarized in the next two lemmas.

\begin{Lem}\label{lm:expected-gradient}
	Consider problem \eqref{eq:sto-opt} under Assumption \ref{ass:basic}.  Let $\mb{g}_i$ be mutually independent unbiased stochastic gradients sampled at points $\mb{x}_i$ such that $\mbb{E}[\mb{g}_i] = \nabla f_i(\mb{x}_i), \forall i\in\{1,2,\ldots,N\}$. Then, we have 
	\begin{align}
	\mbb{E}[\norm{\frac{1}{N}\sum_{i=1}^N \mb{g}_i}^2] \leq  \frac{1}{N}\sigma^{2}  +  \mathbb{E} [ \Vert \frac{1}{N} \sum_{i=1}^{N} \nabla f_{i}(\mathbf{x}_{i})\Vert^{2}] \nonumber 
	\end{align}	
\end{Lem}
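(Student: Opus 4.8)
The plan is to run a bias--variance decomposition of the averaged stochastic gradient. First I would write each stochastic gradient as its mean plus a zero-mean perturbation, $\mb{g}_i = \nabla f_i(\mb{x}_i) + \mb{n}_i$ with $\mb{n}_i \defeq \mb{g}_i - \nabla f_i(\mb{x}_i)$, so that by unbiasedness $\mathbb{E}[\mb{n}_i \mid \mb{x}_i] = \mb{0}$, and by the bounded-variance part of Assumption \ref{ass:basic}, $\mathbb{E}[\norm{\mb{n}_i}^2] \leq \sigma^2$ for every $i$. Averaging then gives $\frac{1}{N}\sum_{i=1}^N \mb{g}_i = \frac{1}{N}\sum_{i=1}^N \nabla f_i(\mb{x}_i) + \frac{1}{N}\sum_{i=1}^N \mb{n}_i$.

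Next I would expand the squared norm of the average:
\begin{align}
\norm{\frac{1}{N}\sum_{i=1}^N \mb{g}_i}^2 = \norm{\frac{1}{N}\sum_{i=1}^N \nabla f_i(\mb{x}_i)}^2 + \frac{2}{N^2}\Big\langle \sum_{i=1}^N \nabla f_i(\mb{x}_i), \sum_{i=1}^N \mb{n}_i \Big\rangle + \norm{\frac{1}{N}\sum_{i=1}^N \mb{n}_i}^2 \nonumber
\end{align}
and take expectations term by term, conditioning on the $\sigma$-algebra generated by the points $\mb{x}_1,\ldots,\mb{x}_N$ (i.e., on the past randomness up to the current sampling step). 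The cross term vanishes: conditionally on the $\mb{x}_i$, the vector $\sum_i \nabla f_i(\mb{x}_i)$ is measurable while each $\mb{n}_i$ has zero conditional mean, so the conditional expectation of the inner product is $0$, hence so is its unconditional expectation.

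For the last term I would use that the $\mb{g}_i$ (hence the $\mb{n}_i$) are mutually independent with zero conditional mean: writing $\mathbb{E}[\norm{\frac{1}{N}\sum_i \mb{n}_i}^2] = \frac{1}{N^2}\sum_{i,j}\mathbb{E}[\langle \mb{n}_i, \mb{n}_j\rangle]$, every off-diagonal term ($i \neq j$) is zero because independence and zero mean factorize the inner product, while every diagonal term is at most $\sigma^2$; this leaves $\frac{1}{N^2}\cdot N\sigma^2 = \sigma^2/N$. Adding the three contributions gives exactly the claimed bound. The only delicate point — and thus the "main obstacle," though a mild one — is fixing the conditioning carefully enough that the cross term genuinely drops and the noise cross-terms decouple; once the right $\sigma$-algebra is identified, the rest is routine algebra.
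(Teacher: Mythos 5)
Your proposal is correct and follows essentially the same route as the paper: the paper packages your three-term expansion as the identity $\mathbb{E}[\Vert \mathbf{Z}\Vert^2] = \mathbb{E}[\Vert \mathbf{Z}-\mathbb{E}[\mathbf{Z}]\Vert^2] + \Vert\mathbb{E}[\mathbf{Z}]\Vert^2$ (applied conditionally on the $\mb{x}_i$, which is exactly the conditioning point you flag) followed by the decoupling of independent zero-mean noise vectors and the bound $\mathbb{E}[\Vert\mb{g}_i-\nabla f_i(\mb{x}_i)\Vert^2]\leq\sigma^2$. No substantive difference.
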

\begin{proof}
	See Supplement \ref{sec:pf-lm-expected-gradient}
\end{proof}
Since $\mbb{E}[\mb{g}_i] = \nabla f_i(\mb{x}_i)$, we have $\mbb{E}[\frac{1}{N}\sum_{i=1}^N \mb{g}_i] = \frac{1}{N} \sum_{i=1}^{N} \nabla f_{i}(\mathbf{x}_{i})$. The implication of Lemma \ref{lm:expected-gradient} is:  while $\mb{g}_i$ are sampled at different points $\mb{x}_i$, the variance of $\frac{1}{N}\sum_{i=1}^N \mb{g}_i$, which is equal to $\mbb{E}[\norm{\frac{1}{N}\sum_{i=1}^N \mb{g}_i - \frac{1}{N} \sum_{i=1}^{N} \nabla f_{i}(\mathbf{x}_{i})}^2] = \mbb{E}[\norm{\frac{1}{N}\sum_{i=1}^N \mb{g}_i}^2] -  \mathbb{E} [ \Vert \frac{1}{N} \sum_{i=1}^{N} \nabla f_{i}(\mathbf{x}_{i})\Vert^{2}]$, scales down by $N$ times when $N$ workers sample gradients independently (even though at different $\mb{x}_i$). Note that if each $\mb{g}_i$ are independently sampled by $N$ workers at the same point $\mb{x}_i \equiv \mb{x}, \forall i$ (as in the classical parallel mini-batch SGD), then  Lemma \ref{lm:expected-gradient} reduces to $\mbb{E}[\norm{\frac{1}{N}\sum_{i=1}^N \mb{g}_i - \nabla f(\mb{x})}^2] \leq \frac{\sigma^2}{N}$, which is the fundamental reason why the classical parallel mini-batch SGD can converges $N$ times faster with $N$ workers. However, to reduce the communication overhead associated with synchronization between workers, we shall consider distributed algorithms with fewer synchronization rounds such that $\mb{x}_i$ at different workers can possibly deviate from each other.  In this case, huge deviations across $\mb{x}_i$ can cause that $\mb{g}_i$ sampled at $\mb{x}_i$ provides unreliable or even contradicting gradient knowledge and hence slow down the convergence. Thus, to let $N$ workers jointly solve problem \eqref{eq:sto-opt} with fast convergence, the distributed algorithm needs to enforce certain concentration of $\mb{g}_i$. The following useful lemma relates the concentration of $\nabla f_i(\mb{x})$ with deviations across all $\mb{x}_i$.

\begin{Lem}\label{lm:from-smooth-f}
Consider problem \eqref{eq:sto-opt} under Assumption \ref{ass:basic}.  For any $N$ points $\mb{x}_i, i\in\{1,2,\ldots,N\}$, if we define $\bar{\mb{x}} = \frac{1}{N}\sum_{i=1}^{N}\mb{x}_i$ ,then we have 
	\begin{align}
	&\frac{1}{N}\sum_{i=1}^N\norm{\nabla f_i(\mb{x}_i) - \frac{1}{N}\sum_{j=1}^N \nabla f_j(\mb{x}_j)}^2 \nonumber\\
	\leq&  \frac{6L^2}{N}\sum_{i=1}^{N}\norm{\mb{x}_i - \bar{\mb{x}}}^2 + \frac{3}{N}\sum_{i=1}^{N}\norm{\nabla f_i(\bar{\mb{x}}) - \nabla f(\bar{\mb{x}})}^2 \nonumber
	\end{align} 
\end{Lem}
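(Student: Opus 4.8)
The plan is to pivot everything through the average iterate $\bar{\mathbf{x}}$, using the fact that $\nabla f(\bar{\mathbf{x}})=\frac1N\sum_{j=1}^N\nabla f_j(\bar{\mathbf{x}})$ is precisely the mean of the ``centered'' local gradients. First I would decompose, for each fixed $i$,
\[
\nabla f_i(\mathbf{x}_i)-\frac1N\sum_{j=1}^N\nabla f_j(\mathbf{x}_j) = (a)+(b)+(c),
\]
where $(a)=\nabla f_i(\mathbf{x}_i)-\nabla f_i(\bar{\mathbf{x}})$, $(b)=\nabla f_i(\bar{\mathbf{x}})-\nabla f(\bar{\mathbf{x}})$, and $(c)=\nabla f(\bar{\mathbf{x}})-\frac1N\sum_{j=1}^N\nabla f_j(\mathbf{x}_j)$; this identity holds exactly because $\nabla f(\bar{\mathbf{x}})=\frac1N\sum_j\nabla f_j(\bar{\mathbf{x}})$. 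Applying the elementary bound $\norm{\mathbf u+\mathbf v+\mathbf w}^2\le 3\norm{\mathbf u}^2+3\norm{\mathbf v}^2+3\norm{\mathbf w}^2$ then reduces the task to controlling the three pieces separately.

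Term $(a)$: since each $f_i$ is $L$-smooth, its gradient is $L$-Lipschitz, so $\norm{(a)}^2\le L^2\norm{\mathbf{x}_i-\bar{\mathbf{x}}}^2$. Term $(c)$: rewrite $(c)=\frac1N\sum_{j=1}^N\big(\nabla f_j(\bar{\mathbf{x}})-\nabla f_j(\mathbf{x}_j)\big)$, then use convexity of $\norm{\cdot}^2$ (Jensen) followed by $L$-Lipschitzness of each $\nabla f_j$ to get $\norm{(c)}^2\le\frac1N\sum_{j=1}^N\norm{\nabla f_j(\bar{\mathbf{x}})-\nabla f_j(\mathbf{x}_j)}^2\le\frac{L^2}{N}\sum_{j=1}^N\norm{\mathbf{x}_j-\bar{\mathbf{x}}}^2$. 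Term $(b)$ is left untouched: it is exactly the deviation quantity appearing on the right-hand side of the claim.

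Finally I would average the resulting bound on $\norm{\nabla f_i(\mathbf{x}_i)-\frac1N\sum_j\nabla f_j(\mathbf{x}_j)}^2$ over $i=1,\dots,N$. The $(a)$ contribution becomes $\frac{3L^2}{N}\sum_{i=1}^N\norm{\mathbf{x}_i-\bar{\mathbf{x}}}^2$; the $(c)$ contribution is independent of $i$, so its average is again $\frac{3L^2}{N}\sum_{j=1}^N\norm{\mathbf{x}_j-\bar{\mathbf{x}}}^2$, and the two add up to $\frac{6L^2}{N}\sum_{i=1}^N\norm{\mathbf{x}_i-\bar{\mathbf{x}}}^2$; the $(b)$ contribution averages to $\frac3N\sum_{i=1}^N\norm{\nabla f_i(\bar{\mathbf{x}})-\nabla f(\bar{\mathbf{x}})}^2$. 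Combining gives the stated inequality. There is no genuine analytic difficulty here; the only point requiring care is the bookkeeping — choosing $\nabla f(\bar{\mathbf{x}})$ (rather than $\frac1N\sum_j\nabla f_j(\mathbf{x}_j)$ itself) as the pivot, and noticing that the aggregate correction term $(c)$ is itself governed by the same deviation sum $\sum_i\norm{\mathbf{x}_i-\bar{\mathbf{x}}}^2$, which is what forces the constant $6$ in front of $L^2$. (A slightly sharper constant is available by first invoking $\frac1N\sum_i\norm{\mathbf a_i-\bar{\mathbf a}}^2\le\frac1N\sum_i\norm{\mathbf a_i-\mathbf b}^2$ with $\mathbf b=\nabla f(\bar{\mathbf{x}})$ and only a two-term split, but the above is already enough for the subsequent analysis.)
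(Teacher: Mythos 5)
Your proposal is correct and follows essentially the same route as the paper's own proof: the identical three-term decomposition through $\bar{\mb{x}}$, the bound $\norm{\mb{a}_1+\mb{a}_2+\mb{a}_3}^2\leq 3\norm{\mb{a}_1}^2+3\norm{\mb{a}_2}^2+3\norm{\mb{a}_3}^2$, smoothness for the first term, and Jensen plus smoothness for the third. No gaps; the bookkeeping and the resulting constants match the paper exactly.
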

\begin{proof}
	See Supplement \ref{sec:pf-lm-from-smooth-f}
\end{proof}

\subsection{Parallel Restarted SGD with Momentum}

\subsubsection{Algorithm \ref{alg:prsgd-momentum} with Polyak's Momentum}

\begin{algorithm}[tb]
	\caption{Parallel Restarted SGD with momentum (PR-SGD-Momentum)}\label{alg:prsgd-momentum}
	\begin{algorithmic}[1]
		\STATE {\bf Input:}  Initialize local solutions $\mb{x}_i^{(0)} = \hat{\mb{x}} \in \mathbb{R}^m$ and momentum buffers $\mb{u}_i^{(0)} = \mb{0}, \forall i\in\{1,2,\ldots,N\}$. Set learning rate $\gamma > 0$, momentum coefficient $\beta\in[0,1)$,  node synchronization interval $I>0$ and number of iterations $T$
		\FOR{$t=1, 2\ldots,T-1$}
		\STATE  Each worker $i$ samples its stochastic gradient $\mb{g}_i^{(t-1)} = \nabla F_i(\mb{x}_i^{(t-1)}; \xi_i^{(t-1)})$ with $\xi_i^{(t-1)}\sim \mc{D}_i$. 
		\STATE Each worker $i$ in parallel updates its local momentum buffer and solution via 
		\begin{align}
		\text{Option I:}\begin{cases}\mb{u}_{i}^{(t)} = \beta \mb{u}_{i}^{(t-1)} + \mb{g}_i^{(t-1)} \\
		\mb{x}_{i}^{(t)} =  \mathbf{x}_{i}^{(t-1)} - \gamma \mb{u}_{i}^{(t)} 
		\end{cases}\forall i \label{eq:prsgd-polyak}.
		\end{align} 
		Or
		\vskip-15pt
		\begin{align}
		\text{Option II:}\begin{cases}\mb{u}_{i}^{(t)} = \beta \mb{u}_{i}^{(t-1)} + \mb{g}_i^{(t-1)}\\
		\mb{v}_{i}^{(t)} = \beta \mb{u}_{i}^{(t)} + \mb{g}_i^{(t-1)}\\
		\mb{x}_{i}^{(t)} =  \mathbf{x}_{i}^{(t-1)} - \gamma \mb{v}_{i}^{(t)}
		\end{cases}\forall i \label{eq:prsgd-nesterov}.
		\end{align} 
		
		\IF {$t$\text{~is a multiple of~}$I$, i.e., $t~\text{mod}~I = 0$,} 
		\STATE Each worker resets its momentum buffer and local solution as the node averages via
		\begin{align}
		\begin{cases}
		\mb{u}_i^{(t)} = \hat{\mb{u}}\defeq \frac{1}{N}\sum_{j=1}^{N} \mb{u}_{j}^{(t)}\\
		\mb{x}_{i}^{(t)} = \hat{\mb{x}} \defeq \frac{1}{N}\sum_{j=1}^{N} \mb{x}_{j}^{(t)}\end{cases}\forall i \label{eq:prsgd-restart}
		\end{align}
		\ENDIF
		\ENDFOR
	\end{algorithmic}
\end{algorithm}

Consider applying the distributed algorithm described in Algorithm \ref{alg:prsgd-momentum} to solve problem \eqref{eq:sto-opt} with $N$ workers.  In the literature, there are two momentum methods for SGD, i.e.,  Polyak's momentum method (also known as the heavy ball method) and Nesterov's momentum method.  Algorithm \ref{alg:prsgd-momentum} can use either of them for local worker updates by providing two options: ``Option I" is Polyak's momentum; ``Option II" is  Nesterov's momentum.\footnote{The current description in \eqref{eq:prsgd-polyak} and \eqref{eq:prsgd-nesterov} is identical to the default implementations of momentum methods in PyTorch's SGD optimizer. Polyak's and Nesterov's momentum have many other equivalent representations. These equivalent representations will be further discussed later.}  We also note that the update steps described in \eqref{eq:prsgd-polyak} or \eqref{eq:prsgd-nesterov} can be locally performed at each worker in parallel.

The synchronization step \eqref{eq:prsgd-restart} can be interpreted as restarting momentum SGD with {\bf new} initial point $\hat{\mb{x}}$ and momentum buffer $\hat{\mb{u}}$, i.e., resetting  $\mb{u}_i^{(t)}=\hat{\mb{u}}$ and $\mb{x}_i^{(t)} = \hat{\mb{x}}$.  In this perspective, we call Algorithm \ref{alg:prsgd-momentum} ``parallel restarted SGD with momentum".  Note that if we choose $\beta = 0$ in Algorithm \ref{alg:prsgd-momentum}, then it degrades to the ``parallel restarted SGD", also known as ``local SGD" or ``SGD with periodic averaging", in \cite{Stich18ArXiv, Yu18ArXivAAAI, Lin18ArXiv, WangJoshi18ArXiv, Zhou18IJCAI, Jiang18NIPS}.  

Since inter-node communication is only needed by Algorithm \ref{alg:prsgd-momentum} to calculate global averages $\hat{\mb{x}}$ and $\hat{\mb{u}}$ in \eqref{eq:prsgd-restart} and happens only once every $I$ iterations, the total number of inter-communication rounds involved in $T$ iterations of Algorithm \ref{alg:prsgd-momentum}  is given by $T/I$. If we use {\bf all-reduce} operations to compute the global averages, the per-round communication is relatively cheap and does not involve a centralized parameter server \cite{Goyal17ArXiv}.  

Algorithm \ref{alg:prsgd-momentum} uses $\{\mb{x}_i^{(t)}\}_{t\geq 0}$ to denote local solution sequences generated at each worker $i$. For each iteration $t$, we define 
\begin{align}
\bar{\mb{x}}^{(t)} \defeq \frac{1}{N}\sum_{i=1}^N \mb{x}_{i}^{(t)} \label{eq:def-x-bar}
\end{align}
as the averages of local solutions $\mb{x}_i^{(t)}$ across all $N$ nodes. Our performance analysis will be performed regarding the aggregated version $\bar{\mb{x}}^{(t)}$ defined in \eqref{eq:def-x-bar} so that ``consensus" is no longer our concern for problem \eqref{eq:sto-opt}.  Performing convergence analysis for the node-average version $\bar{\mb{x}}^{(t)}$ has been an important technique used in previous works on distributed consensus optimization \cite{Lian17NIPS, Mania17SIOPT, Stich18ArXiv, Yu18ArXivAAAI, Jiang18NIPS}.

\subsection{Performance Analysis}
This subsection analyzes the convergence rates of Algorithm \ref{alg:prsgd-momentum} with Polyak's and Nesterov's momentum, respectively.

We first consider Algorithm \ref{alg:prsgd-momentum} with Option I given by \eqref{eq:prsgd-polyak}, i.e., Polyak's momentum.  Polyak's momentum SGD, also known as the heavy ball method, is the classical momentum SGD used for training deep neural networks and often provides fast convergence and good generalization \cite{Krizhevsky12NIPS, Sutskever13ICML,Yan18IJCAI}. If we let $\mb{x}_i^{(-1)} = \mb{x}_i^{(0)}$, then \eqref{eq:prsgd-polyak} in Algorithm \ref{alg:prsgd-momentum} can be equivalently written as the following single variable version
\begin{align}
\mb{x}_i^{(t)} = \mb{x}_i^{(t-1)} - \gamma \mb{g}_i^{(t-1)} + \beta [\mb{x}_i^{(t-1)} - \mb{x}_i^{(t-2)}] \label{eq:polyak-1-variable-version}
\end{align}
where the last term $\beta [\mb{x}_i^{(t-1)} - \mb{x}_i^{(t-2)}]$ is often called Polyak's momentum term. 

It's interesting to note that if we define an auxiliary sequence 
\begin{align}
\bar{\mb{u}}^{(t)} \defeq \frac{1}{N} \sum_{i=1}^{N} \mb{u}_{i}^{(t)} \label{eq:def-u-bar}
\end{align}
which is the node average sequence of local buffer variables $\mb{u}_i^{(t)}$ from Algorithm \ref{alg:prsgd-momentum} with Option I, then we have 
\begin{align}
\begin{cases}
\bar{\mb{u}}^{(t)} &= \beta \bar{\mb{u}}^{(t-1)} + \frac{1}{N}\sum_{i=1}^N \mb{g}_i^{(t-1)} \\
\bar{\mb{x}}^{(t)}  &= \bar{\mb{x}}^{(t-1)}  - \gamma  \bar{\mb{u}}^{(t)}
\end{cases} \label{eq:bar-polyak}, \forall t\geq 1
\end{align}
where $\bar{\mb{x}}^{(t)}$ is defined in  \eqref{eq:def-x-bar}.

An important observation is that if $\frac{1}{N}\sum_{i=1}^N \mb{g}_i^{(t-1)}$ in \eqref{eq:bar-polyak} is replaced with $\nabla f(\bar{\mb{x}}^{(t-1)})$, then \eqref{eq:bar-polyak} is exactly a standard {\bf single-node} SGD momentum method with momentum buffer $\bar{\mb{u}}^{(t)}$ and solution $\bar{\mb{x}}^{(t)}$. That is, under Algorithm \ref{alg:prsgd-momentum},  $N$ workers essentially jointly update $\bar{\mb{u}}^{(t)}$ and $\bar{\mb{x}}^{(t)}$ with momentum SGD using an {\bf inaccurate} stochastic gradient $\frac{1}{N}\sum_{i=1}^N \mb{g}_i^{(t-1)}$.  However, since \eqref{eq:prsgd-restart} periodically (every $I$ iterations) synchronizes all local variables $\mb{u}_i^{(t)}$ and $\mb{x}_i^{(t)}$, our intuition is by synchronizing frequently enough the inaccuracy of the used gradient counterpart in \eqref{eq:bar-polyak} shall not damage the convergence too much. The next theorem summarizes the convergence rate of Algorithm \ref{alg:prsgd-momentum} and characterizes the effect of synchronization interval $I$ in its convergence.

\begin{Rem}
Our Algorithm \ref{alg:prsgd-momentum} is different from a common heuristic model averaging strategy for momentum SGD suggested in \cite{WangJoshi18ArXiv2} and in Microsoft's CNTK framework \cite{CNTK}. The strategy in \cite{CNTK,WangJoshi18ArXiv2} let each worker run local momentum SGD in parallel, and periodically reset momentum buffer variables to zero and average local models.  However, there is no convergence analysis for this strategy. In contrast, this paper shall provide rigorous convergence analysis for our Algorithm \ref{alg:prsgd-momentum}. Our experiment in Supplement \ref{sec:naive-ma} furthers shows that Algorithm \ref{alg:prsgd-momentum} has faster convergence than the strategy in \cite{CNTK,WangJoshi18ArXiv2} and yields better test accuracy when used to train ResNet for CIFAR10.
\end{Rem} 
 
\begin{Thm} \label{thm:polyak-rate}
	Consider problem \eqref{eq:sto-opt} under Assumption \ref{ass:basic}. If we choose $\gamma \leq \frac{(1-\beta)^2}{(1+\beta)L}$ and $I \leq \frac{1-\beta}{6L\gamma}$ in Algorithm \ref{alg:prsgd-momentum} with {\bf Option I}, then for all $T \geq 1$, we have
	{\scriptsize
	\begin{align}
	&\frac{1}{T} \sum_{t=0}^{T-1}\mbb{E}[\norm{\nabla f(\bar{\mb{x}}^{(t)})}^2] \nonumber\\
	\leq &\frac{2(1-\beta)}{\gamma T} \big(f(\bar{\mb{x}}^{(0)}) - f^\ast \big) +  \frac{L\gamma }{(1-\beta)^2}  \frac{\sigma^2}{N}+  \frac{4 L^2\gamma^2 I \sigma^2}{(1-\beta)^2} +  \frac{9L^2\gamma^2I^2 \kappa^2}{(1-\beta)^2} \nonumber\\
	=& O(\frac{1}{\gamma T})  + O(\frac{\gamma}{N} \sigma^2) + O(\gamma^2 I \sigma^2)+ O(\gamma^2 I^2 \kappa^2) \nonumber
	\end{align}
	}%
	where $\{\bar{\mathbf{x}}^{(t)}\}_{t\geq 0}$ is the sequence defined in \eqref{eq:def-x-bar}; and $\sigma$ and $\kappa$ are constants defined in Assumption \ref{ass:basic}; and $f^\ast$ is the minimum value of problem \eqref{eq:sto-opt}.
\end{Thm}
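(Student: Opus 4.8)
The plan is to work with the node-average sequence $\bar{\mb{x}}^{(t)}$, fold Polyak's momentum into a virtual iterate obeying a plain stochastic-gradient recursion, apply the $L$-smoothness descent inequality to that virtual iterate, and then separately control the two features that distinguish the distributed algorithm from single-node momentum SGD: the reduced variance of the averaged stochastic gradient (controlled by Lemma~\ref{lm:expected-gradient}) and the consensus error $\tfrac1N\sum_i\mbb{E}\norm{\mb{x}_i^{(t)}-\bar{\mb{x}}^{(t)}}^2$ caused by averaging only once every $I$ steps (controlled through Lemma~\ref{lm:from-smooth-f} together with the restart structure of Algorithm~\ref{alg:prsgd-momentum}). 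Introduce $\mb{z}^{(t)}\defeq\bar{\mb{x}}^{(t)}-\tfrac{\beta\gamma}{1-\beta}\bar{\mb{u}}^{(t)}$. A short calculation from \eqref{eq:bar-polyak} gives $\mb{z}^{(t)}=\mb{z}^{(t-1)}-\tfrac{\gamma}{1-\beta}\cdot\tfrac1N\sum_{i=1}^N\mb{g}_i^{(t-1)}$ for all $t\ge1$ and $\mb{z}^{(0)}=\bar{\mb{x}}^{(0)}$ (since $\bar{\mb{u}}^{(0)}=\mb{0}$), i.e.\ $\mb{z}^{(t)}$ is an SGD iterate with effective step size $\gamma/(1-\beta)$ driven by the inexact stochastic gradient $\tfrac1N\sum_i\mb{g}_i^{(t-1)}$, whose conditional mean is $\tfrac1N\sum_i\nabla f_i(\mb{x}_i^{(t-1)})$. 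Applying the descent inequality implied by $L$-smoothness to $f(\mb{z}^{(t+1)})$, taking conditional expectations, and invoking Lemma~\ref{lm:expected-gradient} for $\mbb{E}\norm{\tfrac1N\sum_i\mb{g}_i^{(t)}}^2$ gives
\[
\mbb{E}[f(\mb{z}^{(t+1)})]\le\mbb{E}[f(\mb{z}^{(t)})]-\tfrac{\gamma}{1-\beta}\mbb{E}\big[\inprod{\nabla f(\mb{z}^{(t)}),\ \tfrac1N\sum_i\nabla f_i(\mb{x}_i^{(t)})}\big]+\tfrac{L\gamma^2}{2(1-\beta)^2}\Big(\tfrac{\sigma^2}{N}+\mbb{E}\big[\norm{\tfrac1N\sum_i\nabla f_i(\mb{x}_i^{(t)})}^2\big]\Big).
\]

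Next, the inner product is decomposed by writing both $\nabla f(\mb{z}^{(t)})$ and $\tfrac1N\sum_i\nabla f_i(\mb{x}_i^{(t)})$ as $\nabla f(\bar{\mb{x}}^{(t)})$ plus a perturbation and applying Young's inequality; this isolates a negative term proportional to $\norm{\nabla f(\bar{\mb{x}}^{(t)})}^2$ while the perturbations are bounded, using $L$-smoothness, by $\norm{\mb{z}^{(t)}-\bar{\mb{x}}^{(t)}}^2=\tfrac{\beta^2\gamma^2}{(1-\beta)^2}\norm{\bar{\mb{u}}^{(t)}}^2$ and, using Lemma~\ref{lm:from-smooth-f}, by $\tfrac{6L^2}{N}\sum_i\norm{\mb{x}_i^{(t)}-\bar{\mb{x}}^{(t)}}^2+3\kappa^2$ (the term $\mbb{E}\norm{\tfrac1N\sum_i\nabla f_i(\mb{x}_i^{(t)})}^2$ is split the same way). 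Two running sums then need their own estimates. For $\sum_{t}\mbb{E}\norm{\bar{\mb{u}}^{(t)}}^2$, unrolling $\bar{\mb{u}}^{(t)}=\sum_{s\le t}\beta^{t-s}\tfrac1N\sum_i\mb{g}_i^{(s-1)}$, the inequality $\norm{\sum_s\beta^{t-s}\mb{a}_s}^2\le\tfrac1{1-\beta}\sum_s\beta^{t-s}\norm{\mb{a}_s}^2$, a geometric-series sum, and a second use of Lemma~\ref{lm:expected-gradient} bound it by $\tfrac1{(1-\beta)^2}\sum_t\big(\tfrac{\sigma^2}{N}+\mbb{E}\norm{\tfrac1N\sum_i\nabla f_i(\mb{x}_i^{(t)})}^2\big)$, which in turn folds into $\sum_t\norm{\nabla f(\bar{\mb{x}}^{(t)})}^2$, consensus error, and $\kappa^2$ contributions.

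The crux is the consensus error. Using that a synchronization at time $t_0$ (the last multiple of $I$ with $t_0\le t$, so $t-t_0\le I$) resets every $\mb{x}_i^{(t_0)}$ and $\mb{u}_i^{(t_0)}$ to a common value, one gets $\mb{x}_i^{(t)}-\bar{\mb{x}}^{(t)}=-\gamma\sum_{\tau=t_0+1}^{t}(\mb{u}_i^{(\tau)}-\bar{\mb{u}}^{(\tau)})=-\gamma\sum_{k=t_0}^{t-1}w_k\big(\mb{g}_i^{(k)}-\tfrac1N\sum_j\mb{g}_j^{(k)}\big)$, with weights $0\le w_k=\sum_{j=0}^{t-1-k}\beta^j\le\tfrac1{1-\beta}$. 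Splitting $\mb{g}_i^{(k)}-\tfrac1N\sum_j\mb{g}_j^{(k)}$ into a conditionally zero-mean noise part --- whose increments are mutually uncorrelated across $k$, so its squared-norm expectation telescopes into $O\big(\tfrac{\gamma^2 I\sigma^2}{(1-\beta)^2}\big)$ --- and the deterministic part $\nabla f_i(\mb{x}_i^{(k)})-\tfrac1N\sum_j\nabla f_j(\mb{x}_j^{(k)})$ --- which, by Cauchy--Schwarz over the block of length $\le I$ and Lemma~\ref{lm:from-smooth-f}, contributes $O\big(\tfrac{\gamma^2 I^2\kappa^2}{(1-\beta)^2}\big)$ plus an $O\big(\tfrac{\gamma^2 I^2 L^2}{(1-\beta)^2}\big)$ multiple of the consensus errors within the block --- yields a self-referential inequality for $\tfrac1N\sum_i\mbb{E}\norm{\mb{x}_i^{(t)}-\bar{\mb{x}}^{(t)}}^2$. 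The condition $I\le\tfrac{1-\beta}{6L\gamma}$ is precisely what makes that feedback coefficient small enough to resolve the recursion, leaving $\tfrac1N\sum_i\mbb{E}\norm{\mb{x}_i^{(t)}-\bar{\mb{x}}^{(t)}}^2=O\big(\tfrac{\gamma^2 I\sigma^2}{(1-\beta)^2}+\tfrac{\gamma^2 I^2\kappa^2}{(1-\beta)^2}\big)$.

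Finally, telescope the descent inequality over $t=0,\dots,T-1$, substitute the two estimates, and use the step-size hypotheses: $\gamma\le\tfrac{(1-\beta)^2}{(1+\beta)L}$ keeps the coefficient of $\tfrac1T\sum_t\mbb{E}\norm{\nabla f(\bar{\mb{x}}^{(t)})}^2$ positive and of order $\tfrac{\gamma}{1-\beta}$ after all such terms (including those produced by the $\sum_t\mbb{E}\norm{\bar{\mb{u}}^{(t)}}^2$ bound and by the consensus recursion) are moved to the left, and it keeps the $\tfrac{L\gamma^2}{2(1-\beta)^2}\cdot\tfrac{\sigma^2}{N}$ noise term of order $\tfrac{\gamma}{N}\sigma^2$ after normalization. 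Using $f(\mb{z}^{(0)})=f(\bar{\mb{x}}^{(0)})$ and $f(\mb{z}^{(T)})\ge f^\ast$ and dividing by $T$ then produces the four displayed terms. I expect the consensus-error estimate --- getting the $\gamma^2 I\sigma^2$ versus $\gamma^2 I^2\kappa^2$ split right in the presence of the momentum buffer and closing its self-referential inequality under exactly the stated choice of $I$ --- to be the main obstacle; the remainder is the standard heavy-ball-via-auxiliary-sequence bookkeeping.
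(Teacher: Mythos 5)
Your proposal is correct and follows essentially the same route as the paper: your virtual iterate $\mb{z}^{(t)}=\bar{\mb{x}}^{(t)}-\tfrac{\beta\gamma}{1-\beta}\bar{\mb{u}}^{(t)}$ is exactly the paper's auxiliary sequence $\bar{\mb{z}}^{(t)}$ (Lemmas \ref{lm:z-diff}--\ref{lm:z-x-diff}), and your consensus-error recursion with the noise/deterministic split and the self-referential inequality closed by $I\le\tfrac{1-\beta}{6L\gamma}$ is precisely Lemma \ref{lm:diff-avg-per-node}. The only cosmetic difference is that in the main descent step the paper bounds $\Vert\nabla f(\bar{\mb{x}}^{(t)})-\tfrac1N\sum_i\nabla f_i(\mb{x}_i^{(t)})\Vert^2$ directly by Jensen and smoothness rather than via Lemma \ref{lm:from-smooth-f}, which only affects constants.
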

\vspace{-0.5em}
\begin{proof}
See Supplement \ref{sec:pf-thm-polyak-rate}.
\end{proof}
\vspace{-0.5em}
The next corollary summarizes that Algorithm \ref{alg:prsgd-momentum} with Option I using $N$ workers can solve problem \eqref{eq:sto-opt} with the fast $O(\frac{1}{\sqrt{NT}})$ convergence, i.e., achieving the linear speedup (w.r.t. number of workers).  Note that $O(\frac{1}{\sqrt{NT}})$ dominates $O(\frac{N}{T})$ in Corollary \ref{cor:prsgd-momentum-I-equal-1} when $T$ is sufficiently large.

\begin{Cor}[Linear Speedup]\label{cor:prsgd-momentum-I-equal-1}
Consider problem \eqref{eq:sto-opt} under Assumption \ref{ass:basic}. If we choose $\gamma = \frac{\sqrt{N}}{\sqrt{T}}$ and $I = 1$ in Algorithm \ref{alg:prsgd-momentum}  with {\bf Option I}, then for any $T \geq \frac{36L^2N}{(1-\beta)^2}$, we have 
\begin{align}
\frac{1}{T} \sum_{t=0}^{T-1}\mbb{E}[\norm{\nabla f(\bar{\mb{x}}^{(t)})}^2]  = O(\frac{1}{\sqrt{NT}}) + O(\frac{N}{T}).
\end{align}
\end{Cor}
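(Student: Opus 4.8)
The plan is to obtain Corollary~\ref{cor:prsgd-momentum-I-equal-1} as an immediate consequence of Theorem~\ref{thm:polyak-rate}: there is no new analysis to do, only a check that the prescribed parameters are admissible in the theorem and a bookkeeping of which of the four error terms decays at which rate after substituting $\gamma = \sqrt{N}/\sqrt{T}$ and $I = 1$.

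First I would verify that $\gamma = \sqrt{N}/\sqrt{T}$ and $I=1$ satisfy the hypotheses of Theorem~\ref{thm:polyak-rate}. The interval condition $I \le \tfrac{1-\beta}{6L\gamma}$, specialized to $I=1$, reads $\gamma \le \tfrac{1-\beta}{6L}$, i.e. $\sqrt{N/T} \le \tfrac{1-\beta}{6L}$, which is exactly equivalent to the stated hypothesis $T \ge \tfrac{36 L^2 N}{(1-\beta)^2}$. The step-size condition $\gamma \le \tfrac{(1-\beta)^2}{(1+\beta)L}$ is checked the same way and likewise reduces to a lower bound on $T$ of the form (constant depending only on $L$ and $\beta$) times $N$, so it holds for $T$ in the stated range (if one wants to avoid any subtlety for $\beta$ near $1$, one may simply state the threshold as $T$ larger than the maximum of the two constant multiples of $N$). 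Hence Theorem~\ref{thm:polyak-rate} applies with these parameters.

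Next I would substitute $\gamma = \sqrt{N}/\sqrt{T}$ and $I=1$ into the four-term bound of Theorem~\ref{thm:polyak-rate}. Using $\gamma T = \sqrt{NT}$ and $\gamma/N = 1/\sqrt{NT}$, the first term $\tfrac{2(1-\beta)}{\gamma T}\big(f(\bar{\mb{x}}^{(0)}) - f^\ast\big)$ and the second term $\tfrac{L\gamma}{(1-\beta)^2}\tfrac{\sigma^2}{N}$ are both $O\!\big(\tfrac{1}{\sqrt{NT}}\big)$; using $\gamma^2 I = \gamma^2 I^2 = N/T$, the third term $\tfrac{4L^2\gamma^2 I \sigma^2}{(1-\beta)^2}$ and the fourth term $\tfrac{9L^2\gamma^2 I^2 \kappa^2}{(1-\beta)^2}$ are both $O\!\big(\tfrac{N}{T}\big)$. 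Summing the four contributions yields $\tfrac{1}{T}\sum_{t=0}^{T-1}\mbb{E}[\norm{\nabla f(\bar{\mb{x}}^{(t)})}^2] = O(1/\sqrt{NT}) + O(N/T)$, which is the claim.

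There is no genuine obstacle here; the only point worth emphasizing is the design rationale behind $\gamma \asymp \sqrt{N/T}$: it is precisely the value that equalizes the optimization-error term of order $1/(\gamma T)$ against the reduced-variance term of order $\gamma \sigma^2/N$, pinning both at $1/\sqrt{NT}$, while the two ``consensus-discrepancy'' terms produced by skipping synchronization (here with $I=1$) sit at the lower order $N/T$ for $T$ large. The hypothesis $T \ge 36 L^2 N/(1-\beta)^2$ is exactly the minimal requirement that makes this step size admissible in Theorem~\ref{thm:polyak-rate} when $I=1$, and one checks that $1/\sqrt{NT}$ dominates $N/T$ once $T \ge N^3$.
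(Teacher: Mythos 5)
Your proposal is correct and follows exactly the paper's route: Corollary~\ref{cor:prsgd-momentum-I-equal-1} is proved there simply by observing that $\gamma=\sqrt{N}/\sqrt{T}$ and $I=1$ satisfy the hypotheses of Theorem~\ref{thm:polyak-rate} and then substituting into its four-term bound, which is precisely what you do. Your parenthetical remark that for $\beta$ near $1$ the threshold should really be the maximum of the two constant multiples of $N$ (since $T \geq \tfrac{36L^2N}{(1-\beta)^2}$ alone does not imply $\gamma \leq \tfrac{(1-\beta)^2}{(1+\beta)L}$ when $\beta>5/7$) is a legitimate refinement that the paper itself glosses over.
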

\begin{proof}
This corollary follows because the selection of $\gamma$ and $I$ satisfies the conditions in Theorem \ref{alg:prsgd-momentum}.   
\end{proof}

\vspace{-1em}
The next corollary summarizes that the desired linear speedup can be attained with communication skipping, i.e., using $I > 1$ in Algorithm \ref{alg:prsgd-momentum}. In particular, to achieve the linear speedup, $T$ iterations in Algorithm \ref{alg:prsgd-momentum} with Option I only require $O(N^{3/2}T^{1/2})$ communication rounds when $\kappa = 0$, i.e., workers access the common training data set; or only require $O(N^{3/4}T^{3/4})$ communication rounds when $\kappa \neq 0$, i.e., workers access non-identical training data sets.

\begin{Cor}[Linear Speedup with Reduced Communication]\label{cor:prsgd-momentum-rate}
Consider problem \eqref{eq:sto-opt} under Assumption \ref{ass:basic}. 
\vspace{-1em}
\begin{itemize}[leftmargin=10pt]
\item {\bf Case $\kappa = 0$}:  If we choose $\gamma = \frac{\sqrt{N}}{\sqrt{T}}$ and $I \leq \frac{1-\beta}{6L}\frac{\sqrt{T}}{N^{3/2}}  = O(\frac{T^{1/2}}{N^{3/2}})$ in Algorithm \ref{alg:prsgd-momentum}  with {\bf Option I}, then for any $T \geq \frac{(1+\beta)^2L^2}{(1-\beta)^4} N$, we have 
\begin{align}
\frac{1}{T} \sum_{t=0}^{T-1}\mbb{E}[\norm{\nabla f(\bar{\mb{x}}^{(t)})}^2]  = O(\frac{1}{\sqrt{NT}}).
\end{align}
By using $I = O(\frac{T^{1/2}}{N^{3/2}})$, the total number of inter-node communication rounds involved in Algorithm \ref{alg:prsgd-momentum} is $O(N^{3/2}T^{1/2})$. 
\item {\bf Case $\kappa \neq 0$}:  If we choose $\gamma = \frac{\sqrt{N}}{\sqrt{T}}$ and $I \leq \frac{1-\beta}{6L} \frac{T^{1/4}}{N^{3/4}} = O(\frac{T^{1/4}}{N^{3/4}})$ in Algorithm \ref{alg:prsgd-momentum} with {\bf Option I}, then for any $T \geq \frac{(1+\beta)^2L^2}{(1-\beta)^4} N$, we have
\begin{align}
\frac{1}{T} \sum_{t=0}^{T-1}\mbb{E}[\norm{\nabla f(\bar{\mb{x}}^{(t)})}^2]  = O(\frac{1}{\sqrt{NT}}).
\end{align}
By using $I = O(\frac{T^{1/4}}{N^{3/4}})$, the total number of inter-node communication rounds involved in Algorithm \ref{alg:prsgd-momentum} is $O(N^{3/4}T^{3/4})$. 
\end{itemize}
\end{Cor}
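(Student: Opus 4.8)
The plan is to derive this corollary directly from Theorem~\ref{thm:polyak-rate}: it suffices to (i) check that the prescribed choices of $\gamma$ and $I$ satisfy the hypotheses of that theorem, and (ii) substitute these choices into its bound and estimate each of the four terms $O(\tfrac{1}{\gamma T}) + O(\tfrac{\gamma}{N}\sigma^2) + O(\gamma^2 I\sigma^2) + O(\gamma^2 I^2\kappa^2)$, then count communication rounds.

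For (i), with $\gamma = \sqrt{N}/\sqrt{T}$ the condition $\gamma \le \frac{(1-\beta)^2}{(1+\beta)L}$ is, after squaring, exactly $T \ge \frac{(1+\beta)^2 L^2}{(1-\beta)^4}N$, i.e.\ the lower bound on $T$ assumed in the statement. The condition $I \le \frac{1-\beta}{6L\gamma} = \frac{1-\beta}{6L}\sqrt{T/N}$ holds in the case $\kappa = 0$ because $\sqrt{T}/N^{3/2} \le \sqrt{T/N}$ for every $N \ge 1$, and in the case $\kappa \ne 0$ because $T^{1/4}/N^{3/4} \le \sqrt{T/N}$ whenever $TN \ge 1$; hence Theorem~\ref{thm:polyak-rate} applies with these parameters.

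For (ii), substituting $\gamma = \sqrt{N/T}$ immediately gives $O(\tfrac{1}{\gamma T}) = O(\tfrac{1}{\sqrt{NT}})$ and $O(\tfrac{\gamma}{N}\sigma^2) = O(\tfrac{\sigma^2}{\sqrt{NT}})$. When $\kappa = 0$ the last term disappears and the third one becomes $\gamma^2 I\sigma^2 = \tfrac{N}{T}I\sigma^2 = O\big(\tfrac{N}{T}\cdot\tfrac{\sqrt{T}}{N^{3/2}}\sigma^2\big) = O(\tfrac{\sigma^2}{\sqrt{NT}})$, so the whole bound is $O(1/\sqrt{NT})$. When $\kappa \ne 0$, using $I^2 = O(T^{1/2}/N^{3/2})$ the fourth term becomes $\gamma^2 I^2\kappa^2 = O\big(\tfrac{N}{T}\cdot\tfrac{\sqrt{T}}{N^{3/2}}\kappa^2\big) = O(\tfrac{\kappa^2}{\sqrt{NT}})$, and since $T^{1/4}/N^{3/4} \le T^{1/2}/N^{3/2}$ once $T \ge N^3$, in that (linear-speedup) regime the third term is again $O(\tfrac{\sigma^2}{\sqrt{NT}})$; summing, $\frac{1}{T}\sum_{t=0}^{T-1}\mbb{E}[\norm{\nabla f(\bar{\mb{x}}^{(t)})}^2] = O(1/\sqrt{NT})$. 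Finally, since the only inter-node communication is the averaging step~\eqref{eq:prsgd-restart}, invoked once every $I$ iterations, the number of rounds in $T$ iterations is $T/I$; taking $I$ equal up to constants to its allowed upper bound gives $T/I = O(N^{3/2}T^{1/2})$ when $\kappa = 0$ and $T/I = O(N^{3/4}T^{3/4})$ when $\kappa \ne 0$.

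The computation is entirely elementary once Theorem~\ref{thm:polyak-rate} is in hand, so there is no serious obstacle; the only delicate point is the $\gamma^2 I\sigma^2$ term in the case $\kappa \ne 0$, which equals $O(N^{1/4}\sigma^2/T^{3/4})$ and is therefore dominated by $O(1/\sqrt{NT})$ only in the regime $T = \Omega(N^3)$ — i.e.\ exactly the large-$T$ regime in which $O(1/\sqrt{NT})$ is the meaningful rate anyway, mirroring the way the residual $O(N/T)$ term is absorbed in Corollary~\ref{cor:prsgd-momentum-I-equal-1}.
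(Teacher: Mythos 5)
Your proposal is correct and follows the same route as the paper, whose entire proof is the one-line observation that the chosen $\gamma$ and $I$ satisfy the hypotheses of Theorem \ref{thm:polyak-rate}; you simply carry out the verification of those hypotheses and the substitution into the resulting bound explicitly. Your remark that the $\gamma^2 I \sigma^2$ term in the $\kappa \neq 0$ case is dominated by $O(1/\sqrt{NT})$ only in the regime $T = \Omega(N^3)$ is a genuine subtlety that the paper's one-sentence proof silently glosses over.
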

\vspace{-1em}
\begin{proof}
This corollary follows  simply because the selection of $\gamma$ and $I$ satisfies the conditions in Theorem \ref{alg:prsgd-momentum}.  
\end{proof}

\begin{Rem}
Following the convention in non-convex optimization,  the convergence rate in Corollary \ref{cor:prsgd-momentum-rate} is measured by the expected squared gradient norm used in \cite{GhadimiLan13SIOPT, Lian17NIPS, Yu18ArXivAAAI, Jiang18NIPS}.  To attain the average $\frac{1}{T} \sum_{t=0}^{T-1}\mbb{E}[\norm{\nabla f(\bar{\mb{x}}^{(t)})}^2]$ in expectation, one neat strategy suggested in \cite{GhadimiLan13SIOPT} is to generate a random iteration count $R$ uniformly from $\{0,1,\ldots,T-1\}$, terminate Algorithm \ref{alg:prsgd-momentum} at iteration $R$ and output $\bar{\mb{x}}^{(R)}$ as the solution.
\end{Rem}
\begin{Rem}
Note that Theorem \ref{thm:polyak-rate} and Corollary \ref{cor:prsgd-momentum-rate} hold for any $0\leq \beta < 1$. Recall that Algorithm \ref{alg:prsgd-momentum} with $\beta=0$ degrades to parallel restarted SGD (without momentum). For parallel SGD (without momentum), the communication complexity implied by Corollary \ref{cor:prsgd-momentum-rate} improves the state-of-the-art as summarized in Table \ref{table:sgd-communication}.
\end{Rem}

\subsubsection{Algorithm \ref{alg:prsgd-momentum} with Nesterov's Momentum}

Now consider using Nesterov's momentum described in Option II in Algorithm \ref{alg:prsgd-momentum}.  Option II given by \eqref{eq:prsgd-nesterov} introduces extra auxiliary variables $\bar{\mb{v}}_i^{(t)}$ and uses them in the update of local solutions $\mb{x}_i^{(t)}$. It is not difficult\footnote{The derivation on the equivalence is in Supplement \ref{sec:two-equivalent-nesterov}. } to show that \eqref{eq:prsgd-nesterov} yields the same solution sequences $\{\mb{x}_{i}^{(t)}\}_{t\geq0}$ as 
\begin{align}
\begin{cases}
\mb{y}_{i}^{(t)} = \mb{x}_{i}^{(t-1)} - \gamma \mb{g}_i^{(t-1)}\\
\mb{x}_{i}^{(t)} =  \mathbf{y}_{i}^{(t)} +\beta [\mb{y}_{i}^{(t)} - \mb{y}_{i}^{(t-1)}]
\end{cases}\forall i \label{eq:nesterov-common-version}
\end{align}
with $\mathbf{y}_{i}^{(0)} \defeq \mb{0}, \forall i$. The only difference between \eqref{eq:prsgd-nesterov} and \eqref{eq:nesterov-common-version} is the adoption of different cache variables.

Equation \eqref{eq:nesterov-common-version} is more widely used to describe SGD with Nesterov's momentum in the literature \cite{book_ConvexOpt_Nesterov}.  However, by writing Nesterov's momentum SGD as \eqref{eq:prsgd-nesterov}, an important observation is that momentum buffer variables $\mb{u}_i^{(t)}$ in Polyak's and Nesterov's momentum methods evolve according to the same dynamic (with stochastic gradients sampled at different points).  By adapting the convergence rate analysis for Polyak's momentum, Theorem \ref{thm:nesterov-rate} (formally proven in Supplement \ref{sec:pf-thm-nesterov-rate}) summarizes the convergence for Nesterov's momentum.

\begin{Thm} \label{thm:nesterov-rate}
	Consider problem \eqref{eq:sto-opt} under Assumption \ref{ass:basic}. If we choose $\gamma \leq \frac{(1-\beta)^2}{L(1+\beta^3)}$ and $I \leq \frac{1-\beta}{6L\gamma}$ in Algorithm \ref{alg:prsgd-momentum} with {\bf Option II}, then for all $T > 0$, we have 
	{\scriptsize
	\begin{align}
	&\frac{1}{T} \sum_{t=0}^{T-1}\mbb{E}[\norm{\nabla f(\bar{\mb{x}}^{(t)})}^2] \nonumber\\
	\leq &\frac{2(1-\beta)}{\gamma T} \big(f(\bar{\mb{x}}^{(0)}) - f^\ast \big) +  \frac{L\gamma }{(1-\beta)^2}  \frac{\sigma^2}{N}+  \frac{4 L^2\gamma^2 I \sigma^2}{(1-\beta)^2} +  \frac{9L^2\gamma^2I^2 \kappa^2}{(1-\beta)^2} \nonumber\\
	=& O(\frac{1}{\gamma T})  + O(\frac{\gamma}{N} \sigma^2) + O(\gamma^2 I \sigma^2)+ O(\gamma^2 I^2 \kappa^2) \nonumber
	\end{align}
	}%
	where $\{\bar{\mathbf{x}}^{(t)}\}_{t\geq 0}$ is the sequence defined in \eqref{eq:def-x-bar}; and $\sigma$ and $\kappa$ are constants defined in Assumption \ref{ass:basic}; and $f^\ast$ is the minimum value of problem \eqref{eq:sto-opt}.
\end{Thm}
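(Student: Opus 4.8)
The plan is to reduce Option~II to the analysis already carried out for Option~I in Theorem~\ref{thm:polyak-rate}, exploiting the observation made above that the momentum buffers $\mb{u}_i^{(t)}$ in \eqref{eq:prsgd-nesterov} obey the very same recursion $\mb{u}_i^{(t)}=\beta\mb{u}_i^{(t-1)}+\mb{g}_i^{(t-1)}$ as in \eqref{eq:prsgd-polyak}. First I would pass to the node averages $\bar{\mb{x}}^{(t)}$, $\bar{\mb{u}}^{(t)}$ and $\bar{\mb{g}}^{(t)}\defeq\frac1N\sum_i\mb{g}_i^{(t)}$. Since averaging commutes with the linear local updates and the restart step \eqref{eq:prsgd-restart} leaves node averages unchanged, one gets for all $t\ge1$ that $\bar{\mb{u}}^{(t)}=\beta\bar{\mb{u}}^{(t-1)}+\bar{\mb{g}}^{(t-1)}$ and $\bar{\mb{x}}^{(t)}=\bar{\mb{x}}^{(t-1)}-\gamma\big[(1+\beta)\bar{\mb{u}}^{(t)}-\beta\bar{\mb{u}}^{(t-1)}\big]$, i.e. the averages run single-node Nesterov momentum driven by the \emph{inexact} gradient $\bar{\mb{g}}^{(t-1)}$ that is sampled at the drifted local iterates $\mb{x}_i^{(t-1)}$ rather than at $\bar{\mb{x}}^{(t-1)}$.

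The second step removes the momentum term through an auxiliary sequence. Where Polyak's method uses $\bar{\mb{x}}^{(t)}-\frac{\gamma\beta}{1-\beta}\bar{\mb{u}}^{(t)}$, for Nesterov's method I would instead set $\mb{z}^{(t)}\defeq\bar{\mb{x}}^{(t)}-\frac{\gamma\beta^2}{1-\beta}\bar{\mb{u}}^{(t)}$ — replacing that exponent is essentially the only structural change. A direct computation from the two averaged recursions gives the clean identity $\mb{z}^{(t)}=\mb{z}^{(t-1)}-\frac{\gamma}{1-\beta}\bar{\mb{g}}^{(t-1)}$ for all $t\ge1$ (it also holds across a restart, since a restart changes no average), and $\mb{z}^{(0)}=\bar{\mb{x}}^{(0)}$ because $\bar{\mb{u}}^{(0)}=\mb{0}$. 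Thus $\{\mb{z}^{(t)}\}$ is a plain SGD trajectory with effective step $\frac{\gamma}{1-\beta}$. I would then apply the $L$-smoothness descent inequality to $f$ along $\mb{z}^{(t)}$, take conditional expectations (using $\mbb{E}[\bar{\mb{g}}^{(t)}]=\frac1N\sum_i\nabla f_i(\mb{x}_i^{(t)})$ and Lemma~\ref{lm:expected-gradient} to bound $\mbb{E}\norm{\bar{\mb{g}}^{(t)}}^2$ by $\frac{\sigma^2}{N}+\mbb{E}\norm{\frac1N\sum_i\nabla f_i(\mb{x}_i^{(t)})}^2$), and split the cross term with $2\langle a,b\rangle=\norm{a}^2+\norm{b}^2-\norm{a-b}^2$ into: a genuine descent term in $\norm{\nabla f(\bar{\mb{x}}^{(t)})}^2$; a term in $\norm{\frac1N\sum_i\nabla f_i(\mb{x}_i^{(t)})}^2$ that is cancelled by the same quantity coming out of $\frac{L}{2}\norm{\mb{z}^{(t+1)}-\mb{z}^{(t)}}^2$ once $\gamma$ is small; a ``consensus'' error $\le\frac{\gamma L^2}{2(1-\beta)N}\sum_i\norm{\mb{x}_i^{(t)}-\bar{\mb{x}}^{(t)}}^2$ (from $L$-smoothness of each $f_i$); and a ``momentum-lag'' error governed by $\norm{\bar{\mb{x}}^{(t)}-\mb{z}^{(t)}}=\frac{\gamma\beta^2}{1-\beta}\norm{\bar{\mb{u}}^{(t)}}$, bounded via the geometric-sum form of $\bar{\mb{u}}^{(t)}$ and Lemma~\ref{lm:expected-gradient}. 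Handling this last term by Young's inequality is where the step-size restriction $\gamma\le\frac{(1-\beta)^2}{L(1+\beta^3)}$ enters — the exponent $3$, versus $1$ for Polyak, coming from the $\beta^2$ in $\mb{z}^{(t)}$ together with the extra $(1+\beta)$ factor carried by Nesterov's $\bar{\mb{v}}^{(t)}$.

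The remaining and most delicate ingredient is the disagreement bound $\frac1N\sum_i\mbb{E}\norm{\mb{x}_i^{(t)}-\bar{\mb{x}}^{(t)}}^2$. Let $t_0\le t$ be the last restart time, so $t-t_0\le I$ and $\mb{u}_i^{(t_0)}=\bar{\mb{u}}^{(t_0)}$, $\mb{x}_i^{(t_0)}=\bar{\mb{x}}^{(t_0)}$. Unrolling the shared buffer recursion the common $\beta^{\,t-t_0}\bar{\mb{u}}^{(t_0)}$ cancels and $\mb{u}_i^{(t)}-\bar{\mb{u}}^{(t)}=\sum_{s=t_0}^{t-1}\beta^{\,t-1-s}(\mb{g}_i^{(s)}-\bar{\mb{g}}^{(s)})$; since $\mb{v}_i^{(r)}-\bar{\mb{v}}^{(r)}=(1+\beta)(\mb{u}_i^{(r)}-\bar{\mb{u}}^{(r)})-\beta(\mb{u}_i^{(r-1)}-\bar{\mb{u}}^{(r-1)})$ and $\mb{x}_i^{(t)}-\bar{\mb{x}}^{(t)}=-\gamma\sum_{r=t_0+1}^{t}(\mb{v}_i^{(r)}-\bar{\mb{v}}^{(r)})$, the disagreement is a nested geometric sum of the centered gradients $\mb{g}_i^{(s)}-\bar{\mb{g}}^{(s)}$. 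I would bound $\frac1N\sum_i\mbb{E}\norm{\mb{g}_i^{(s)}-\bar{\mb{g}}^{(s)}}^2$ by splitting it into its stochastic part ($O(\sigma^2)$ by Assumption~\ref{ass:basic}) and its deterministic part $\nabla f_i(\mb{x}_i^{(s)})-\frac1N\sum_j\nabla f_j(\mb{x}_j^{(s)})$, the latter controlled by Lemma~\ref{lm:from-smooth-f}, i.e. by $\frac{6L^2}{N}\sum_j\norm{\mb{x}_j^{(s)}-\bar{\mb{x}}^{(s)}}^2+3\kappa^2$. Collapsing the geometric weights ($\sum_k\beta^k\le\frac1{1-\beta}$) with a weighted Cauchy--Schwarz over the at most $I$ indices and invoking $I\le\frac{1-\beta}{6L\gamma}$ makes the self-term coefficient strictly below one, so the recursion for $\max_t\frac1N\sum_i\mbb{E}\norm{\mb{x}_i^{(t)}-\bar{\mb{x}}^{(t)}}^2$ closes and yields a bound of order $\gamma^2 I\sigma^2+\gamma^2 I^2\kappa^2$ (up to $(1-\beta)$ factors). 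Substituting this and the $\bar{\mb{u}}^{(t)}$ bound into the per-step descent inequality, telescoping over $t=0,\dots,T-1$, using $f(\mb{z}^{(T)})\ge f^\ast$ and $\mb{z}^{(0)}=\bar{\mb{x}}^{(0)}$, and dividing by $T$ produces exactly the claimed inequality. The main obstacle is this disagreement bound: momentum couples the drift of $\mb{x}_i^{(t)}$ and $\mb{u}_i^{(t)}$ through nested geometric sums and the extra $(1+\beta)$ factor in Nesterov's $\mb{v}_i^{(t)}$, so one must track constants carefully to verify they collapse to the same order as in the Polyak (and no-momentum) case and that the conditions on $\gamma$ and $I$ are precisely what force the recursion to contract.
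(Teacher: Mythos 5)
Your proposal is correct and follows essentially the same route as the paper: your auxiliary sequence $\mb{z}^{(t)}=\bar{\mb{x}}^{(t)}-\frac{\gamma\beta^2}{1-\beta}\bar{\mb{u}}^{(t)}$ is exactly the paper's $\bar{\mb{y}}^{(t)}$ (the paper verifies $\bar{\mb{y}}^{(t)}-\bar{\mb{x}}^{(t)}=-\frac{\gamma\beta^2}{1-\beta}\bar{\mb{u}}^{(t)}$ and the same one-step identity $\bar{\mb{y}}^{(t+1)}-\bar{\mb{y}}^{(t)}=-\frac{\gamma}{1-\beta}\frac{1}{N}\sum_i\mb{g}_i^{(t)}$), and your descent step, Young-inequality weighting giving the $1+\beta^3$ step-size condition, and disagreement bound via unrolling from the last restart, Lemma \ref{lm:from-smooth-f}, and the contraction condition $I\leq\frac{1-\beta}{6L\gamma}$ all mirror the paper's Lemmas \ref{lm:y-diff}--\ref{lm:nesterov-diff-avg-per-node} and its main argument.
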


\begin{Rem}
By comparing Theorem \ref{thm:polyak-rate} and Theorem \ref{thm:nesterov-rate}, we conclude that the order of convergence rate for both options in Algorithm \ref{alg:prsgd-momentum} (with slightly different choices of learning rate $\gamma$) have the same dependence on $\gamma, N$ and $I$.  It is then immediate that Algorithm \ref{alg:prsgd-momentum} with Option II can achieve the linear speedup with the same (reduced) communication complexity as summarized in Corollary \ref{cor:prsgd-momentum-rate} for Option I. 
\end{Rem}

\vspace{-1em}
\section{Extension: Momentum SGD with Decentralized Communication}

Note that Algorithm \ref{alg:prsgd-momentum} requires to compute global averages of all $N$ nodes at each communication step \eqref{eq:prsgd-restart}. Such global averages are possible without a central parameter center by using distributive average protocols such as all-reduce primitives \cite{Goyal17ArXiv}. However, the feasibility and performance of exiting all-reduce protocols are restricted to the underlying network topology. For example, the performance of a ring based all-reduce protocol can be poor in a line network, which does not have a physical ring.  In this subsection, we consider distributed momentum SGD with decentralized communication (described in Algorithm \ref{alg:prsgd-decentralized}) where the communication pattern can be fully customized.  Let $\mb{W}\in \mbb{R}^{N\times N}$ be a symmetric doubly stochastic matrix. By definition, a symmetric doubly stochastic matrix satisfies  (1) $W_{ij}\in[0,1], \forall i, j\in \{1,2,\ldots, N\}$; (2) $\mb{W}\tran = \mb{W}$; (3) $\sum_{j=1}^N W_{ij}= 1, \forall i\in \{1,2,\ldots, N\}$. For a computer network with $N$ nodes, we can use $\mb{W}$ to encode the communication between nodes, i.e., if node $i$ and node $j$ are disconnected, we let $W_{ij} = 0$. This section further assumes the mixing matrix $\mb{W}$ that obeys the network connection topology is selected to satisfy the following assumption. The same assumption is used in \cite{Lian17NIPS,Jiang17NIPS,WangJoshi18ArXiv}.

\begin{Ass}\label{ass:mix-matrix}
The mixing matrix $\mb{W}\in \mbb{R}^{N\times N}$ is a symmetric doubly stochastic matrix satisfying $\lambda_{1}(\mb{W}) = 1$ and 
\begin{align}
\max\{\abs{\lambda_2(\mb{W})}, \abs{\lambda_{N}(\mb{W})}\} \leq \sqrt{\rho} <1
\end{align}
where $\lambda_i(\mb{W})$ is the $i$-th largest eigenvalue of $\mb{W}$.
\end{Ass}

\begin{algorithm}[tb]
	\caption{Distributed Momentum SGD with Decentralized Communication}\label{alg:prsgd-decentralized}
	\begin{algorithmic}[1]
		\STATE {\bf Input:}  Initialize local solutions $\mb{x}_i^{(0)} =\hat{\mb{x}} \in \mathbb{R}^m$ and momentum buffers $ \mb{u}_i^{(0)} = \mb{0}, \forall i\in\{1,2,\ldots,N\}$. Set mixing matrix $\mb{W}$, learning rate $\gamma > 0$, momentum coefficient $\beta\in[0,1)$ and number of iterations $T$
		\FOR{$t=1, 2\ldots,T-1$}
		\STATE  Each worker $i$ samples its stochastic gradient $\mb{g}_i^{(t-1)} = \nabla F_i(\mb{x}_i^{(t-1)}; \xi_i^{(t-1)})$ with $\xi_i^{(t-1)}\sim \mc{D}_i$. 
		\STATE Each worker $i$ in parallel updates its local auxiliary momentum buffer $\tilde{\mb{u}}_i^{(t)}$ and solution  $\tilde{\mb{x}}_i^{(t)}$ via 		 
		\begin{align}
		\text{Option I:}\begin{cases}\tilde{\mb{u}}_{i}^{(t)} = \beta \mb{u}_{i}^{(t-1)} + \mb{g}_i^{(t-1)} \\
		\tilde{\mb{x}}_{i}^{(t)} =  \mathbf{x}_{i}^{(t-1)} - \gamma \tilde{\mb{u}}_{i}^{(t)} 
		\end{cases}\forall i \label{eq:decentralized-polyak}.
		\end{align} 
		Or
		\vskip-15pt
		\begin{align}
		\text{Option II:}\begin{cases}\tilde{\mb{u}}_{i}^{(t)} = \beta \mb{u}_{i}^{(t-1)} + \mb{g}_i^{(t-1)}\\
		\tilde{\mb{v}}_{i}^{(t)} = \beta \tilde{\mb{u}}_{i}^{(t)} + \mb{g}_i^{(t-1)}\\
		\tilde{\mb{x}}_{i}^{(t)} =  \mathbf{x}_{i}^{(t-1)} - \gamma \tilde{\mb{v}}_{i}^{(t)}
		\end{cases}\forall i \label{eq:decentralized-nesterov}.
		\end{align}
		
		\STATE Each worker $i$ updates its local momentum buffer $\mb{u}_i^{(t)}$ and solution  $\mb{x}_i^{(t)}$ based on the neighborhood weighted averages given by  
		\begin{align}
		\begin{cases}
		\mb{u}_i^{(t)} =  \sum_{j=1}^N \tilde{\mb{u}}_j^{(t)}  W_{ji} \\
		\mb{x}_{i}^{(t)} =  \sum_{j=1}^N \tilde{\mb{x}}_j^{(t)}  W_{ji} \end{cases}\forall i \label{eq:prsgd-avg-decentralized}
		\end{align}
		\ENDFOR
	\end{algorithmic}
\end{algorithm}

Compared with PR-SGD-Momentum described in Algorithm \ref{alg:prsgd-momentum}, Algorithm \ref{alg:prsgd-decentralized}  uses a {\bf local} aggregation step \eqref{eq:prsgd-avg-decentralized} at every iteration.  While Algorithm \ref{alg:prsgd-decentralized} can not skip the aggregation steps as Algorithm \ref{alg:prsgd-momentum} does and hence involves $I$ times more communication rounds, the per-step communication is more flexible since each node only computes neighborhood averages rather than global averages. As a consequence, Algorithm \ref{alg:prsgd-decentralized} is more suitable to distributed machine learning with mobiles where the network topology is heterogeneous and diverse. By extending our analysis for Algorithm \ref{alg:prsgd-momentum}, the next theorem proves the linear speedup of Algorithm \ref{alg:prsgd-decentralized}. 

\begin{Thm} \label{thm:decentralized-rate}
	Consider problem \eqref{eq:sto-opt} under Assumptions \ref{ass:basic} and \ref{ass:mix-matrix}. If we choose $\gamma \leq \min\{\frac{(1-\beta)^2(1-\sqrt{\rho})^2}{6L},  \frac{(1-\beta)(1-\sqrt{\rho})}{4L}\}$ in Algorithm \ref{alg:prsgd-decentralized}, then for all $T \geq 1$, we have
	{\scriptsize
	\begin{align}
	&\frac{1}{T} \sum_{t=0}^{T-1}\mbb{E}[\norm{\nabla f(\bar{\mb{x}}^{(t)})}^2] = O(\frac{1}{\gamma T})  + O(\frac{\gamma}{N} \sigma^2) + O(\gamma^2 \sigma^2)+ O(\gamma^2 \kappa^2) \nonumber
	\end{align}
	}%
	where $\{\bar{\mathbf{x}}^{(t)}\}_{t\geq 0}$ defined in \eqref{eq:def-x-bar} are the averages of local solutions across $N$ workers; and $\sigma, \kappa$ and $\rho$ are constants defined in Assumptions \ref{ass:basic}-\ref{ass:mix-matrix}. 
\end{Thm}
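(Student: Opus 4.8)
The plan is to split the analysis into a \emph{descent} estimate on the node-averaged sequence $\bar{\mb{x}}^{(t)}$ and a \emph{consensus} estimate controlling the drift $\frac1N\sum_i\norm{\mb{x}_i^{(t)}-\bar{\mb{x}}^{(t)}}^2$, exactly as in the proof of Theorem~\ref{thm:polyak-rate}, and then to replace the periodic-reset consensus bound used there with a geometric-mixing bound based on Assumption~\ref{ass:mix-matrix}. I would carry out the argument for Option~I (Polyak) and note that Option~II follows by the same reduction used between Theorems~\ref{thm:polyak-rate} and~\ref{thm:nesterov-rate}.

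First I would observe that, since $\mb{W}$ is doubly stochastic, the local averaging step \eqref{eq:prsgd-avg-decentralized} leaves node averages unchanged, so with $\bar{\mb{u}}^{(t)}\defeq\frac1N\sum_i\mb{u}_i^{(t)}$ the averaged iterates of Algorithm~\ref{alg:prsgd-decentralized} satisfy \emph{exactly} the recursion \eqref{eq:bar-polyak}: $\bar{\mb{u}}^{(t)}=\beta\bar{\mb{u}}^{(t-1)}+\frac1N\sum_i\mb{g}_i^{(t-1)}$ and $\bar{\mb{x}}^{(t)}=\bar{\mb{x}}^{(t-1)}-\gamma\bar{\mb{u}}^{(t)}$. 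Consequently the descent argument behind Theorem~\ref{thm:polyak-rate} — $L$-smoothness along $\bar{\mb{x}}^{(t)}$, Lemma~\ref{lm:expected-gradient} for the $1/N$ variance reduction, and Lemma~\ref{lm:from-smooth-f} plus $L$-smoothness to bound the gradient bias $\norm{\frac1N\sum_i\nabla f_i(\mb{x}_i^{(t)})-\nabla f(\bar{\mb{x}}^{(t)})}^2$ by $\frac{L^2}{N}\sum_i\norm{\mb{x}_i^{(t)}-\bar{\mb{x}}^{(t)}}^2$ — carries over verbatim, yielding a bound of the form $\frac1T\sum_t\mbb{E}\norm{\nabla f(\bar{\mb{x}}^{(t)})}^2\le\frac{2(1-\beta)}{\gamma T}\big(f(\bar{\mb{x}}^{(0)})-f^\ast\big)+\frac{L\gamma\sigma^2}{(1-\beta)^2 N}+\frac{CL^2}{(1-\beta)^2}\cdot\frac1{TN}\sum_t\mbb{E}\big[\sum_i\norm{\mb{x}_i^{(t)}-\bar{\mb{x}}^{(t)}}^2\big]$ for a numerical constant $C$; only the last term needs a new estimate.

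For the consensus term I would stack iterates into matrices $X^{(t)},U^{(t)},G^{(t)}\in\mbb{R}^{m\times N}$ with worker-indexed columns. From $U^{(0)}=\mb{0}$ one unrolls $U^{(t)}=\sum_{s=0}^{t-1}\beta^{t-1-s}G^{(s)}\mb{W}^{t-s}$ and then $X^{(t)}=X^{(0)}\mb{W}^t-\frac{\gamma}{1-\beta}\sum_{s=0}^{t-1}(1-\beta^{t-s})G^{(s)}\mb{W}^{t-s}$. Right-multiplying by $\mb{I}-\mb{J}$ with $\mb{J}=\frac1N\mb{1}\mb{1}\tran$, the initial term vanishes because all workers start at $\hat{\mb{x}}$, and $\mb{W}^{k}(\mb{I}-\mb{J})=(\mb{W}-\mb{J})^{k}$ has spectral norm at most $\rho^{k/2}$ by Assumption~\ref{ass:mix-matrix}. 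Bounding $1-\beta^{t-s}\le1$, applying a Cauchy--Schwarz split over the geometric weights $\rho^{(t-s)/2}$, and exchanging the sums over $t$ and $s$ gives $\sum_{t=0}^{T-1}\mbb{E}\norm{X^{(t)}(\mb{I}-\mb{J})}_F^2\le\frac{\gamma^2}{(1-\beta)^2(1-\sqrt{\rho})^2}\sum_{s=0}^{T-1}\mbb{E}\norm{G^{(s)}}_F^2$, and then unbiasedness, bounded variance, and $L$-smoothness combined with Assumption~\ref{ass:basic} yield $\mbb{E}\norm{G^{(s)}}_F^2\le N\sigma^2+2L^2\,\mbb{E}\norm{X^{(s)}(\mb{I}-\mb{J})}_F^2+4N\kappa^2+4N\,\mbb{E}\norm{\nabla f(\bar{\mb{x}}^{(s)})}^2$.

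This leaves a pair of coupled inequalities: the consensus sum is $O(\gamma^2)$ times $\big(\sigma^2+\kappa^2$ plus the consensus sum itself plus the averaged gradient-norm sum$\big)$, while the descent bound makes the averaged gradient-norm sum $O(1/(\gamma T))+O(\gamma\sigma^2/N)$ plus $O(\gamma^2)$ times the consensus sum. The two step-size conditions $\gamma\le\frac{(1-\beta)(1-\sqrt{\rho})}{4L}$ and $\gamma\le\frac{(1-\beta)^2(1-\sqrt{\rho})^2}{6L}$ are precisely what makes both self-referential coefficients smaller than $\tfrac12$, so one solves the $2\times2$ system, absorbs, and obtains $\frac1T\sum_t\mbb{E}\norm{\nabla f(\bar{\mb{x}}^{(t)})}^2=O(\frac1{\gamma T})+O(\frac{\gamma}{N}\sigma^2)+O(\gamma^2\sigma^2)+O(\gamma^2\kappa^2)$, as claimed. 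The main obstacle is exactly this final bookkeeping: the consensus drift, the stochastic-gradient second moment, and the averaged gradient norm all feed back into one another, and one must track constants carefully — in particular the $(1-\beta)^{-2}$ from the descent term multiplied against the $(1-\sqrt{\rho})^{-2}$ from the mixing bound — to certify that the stated $\gamma$ licenses both absorptions. For Option~II one notes, as in the discussion preceding Theorem~\ref{thm:nesterov-rate}, that the buffers $\mb{u}_i^{(t)}$ obey the same dynamics and $\mb{x}_i^{(t)}$ differs only by an $O(\beta)$ auxiliary correction, so the identical argument applies up to the slightly altered step-size restriction.
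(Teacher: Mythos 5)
Your proposal is correct and follows essentially the same route as the paper: the observation that double stochasticity of $\mb{W}$ makes the node averages obey the centralized momentum recursion (the paper's Fact~\ref{fact:decentralized-bar-dynamic}), the reuse of the descent analysis from Theorem~\ref{thm:polyak-rate}, and a matrix-form consensus bound via $\norm{(\mb{I}-\mb{Q})\mb{W}^k}\leq\rho^{k/2}$ followed by absorption under the stated step-size condition (the paper's Lemma~\ref{lm:x-bar-x-diff-decentralized}). The only cosmetic differences are that the paper first splits $\mb{G}^{(\tau)}$ into its mean and a martingale-difference part (gaining a $\tfrac{1}{1-\rho}$ rather than $\tfrac{1}{(1-\sqrt{\rho})^2}$ factor on the $\sigma^2$ term) and routes the gradient feedback through $\norm{\frac{1}{N}\sum_i\nabla f_i(\mb{x}_i^{(t)})}^2$, which is absorbed by its already-negative coefficient rather than by solving a $2\times 2$ system in the target quantity; neither affects the $O(\cdot)$ conclusion.
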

\vspace{-1em}
\begin{proof}
See Supplement \ref{sec:pf-thm-decentralized-rate}.
\end{proof}
\vspace{-0.8em}
This next corollary further summarizes that using $\gamma = O(\frac{\sqrt{N}}{\sqrt{T}})$ in Algorithm \ref{alg:prsgd-decentralized} can achieve $O(\frac{1}{\sqrt{NT}})$ convergence with the linear speedup. 
\begin{Cor}
Consider problem \eqref{eq:sto-opt} under Assumptions \ref{ass:basic} and \ref{ass:mix-matrix}. If we choose $\gamma =\frac{\sqrt{N}}{\sqrt{T}}$ in Algorithm \ref{alg:prsgd-decentralized}, then for all $T\geq \max\{\frac{36NL^2}{(1-\beta)^4(1-\sqrt{\rho})^4}, \frac{32NL^2}{(1-\beta)^2(1-\sqrt{\rho})^2}\}$, we have
	\begin{align}
	\frac{1}{T} \sum_{t=0}^{T-1}\mbb{E}[\norm{\nabla f(\bar{\mb{x}}^{(t)})}^2] = O(\frac{1}{\sqrt{NT}})  + O(\frac{N}{T}) \nonumber
	\end{align}
where $\{\bar{\mathbf{x}}^{(t)}\}_{t\geq 0}$ defined in \eqref{eq:def-x-bar} are the averages of local solutions across $N$ workers.
\end{Cor}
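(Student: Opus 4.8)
The plan is to obtain this statement as an immediate specialization of Theorem \ref{thm:decentralized-rate} to the step size $\gamma = \sqrt{N}/\sqrt{T}$. Concretely, I would (i) check that this $\gamma$ satisfies the step-size hypothesis of Theorem \ref{thm:decentralized-rate}, and (ii) substitute it into the four error terms of that theorem and collect.

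For step (i): Theorem \ref{thm:decentralized-rate} requires $\gamma \le \frac{(1-\beta)^2(1-\sqrt{\rho})^2}{6L}$ and $\gamma \le \frac{(1-\beta)(1-\sqrt{\rho})}{4L}$. Squaring, with $\gamma = \sqrt{N}/\sqrt{T}$ these become $T \ge \frac{36L^2 N}{(1-\beta)^4(1-\sqrt{\rho})^4}$ and $T \ge \frac{16L^2 N}{(1-\beta)^2(1-\sqrt{\rho})^2}$, respectively, both of which are implied by the stated lower bound $T \ge \max\{\frac{36NL^2}{(1-\beta)^4(1-\sqrt{\rho})^4}, \frac{32NL^2}{(1-\beta)^2(1-\sqrt{\rho})^2}\}$. (Theorem \ref{thm:decentralized-rate} is stated for Algorithm \ref{alg:prsgd-decentralized} with either option, so the corollary inherits that.) Hence the theorem applies and yields $\frac{1}{T}\sum_{t=0}^{T-1}\mbb{E}[\norm{\nabla f(\bar{\mb{x}}^{(t)})}^2] = O(\frac{1}{\gamma T}) + O(\frac{\gamma}{N}\sigma^2) + O(\gamma^2\sigma^2) + O(\gamma^2\kappa^2)$.

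For step (ii): substituting $\gamma = \sqrt{N}/\sqrt{T}$ gives $O(\frac{1}{\gamma T}) = O(\frac{1}{\sqrt{NT}})$, $O(\frac{\gamma}{N}\sigma^2) = O(\frac{\sigma^2}{\sqrt{NT}}) = O(\frac{1}{\sqrt{NT}})$, and $O(\gamma^2\sigma^2) = O(\gamma^2\kappa^2) = O(\frac{N}{T})$; summing the four contributions produces the claimed $O(\frac{1}{\sqrt{NT}}) + O(\frac{N}{T})$. There is essentially no obstacle in the corollary itself — its content is entirely supplied by Theorem \ref{thm:decentralized-rate}, and the only purpose of the lower bound on $T$ is to certify the two step-size constraints above. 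The genuine work (should a self-contained proof be desired) lives in Theorem \ref{thm:decentralized-rate}: reducing the node-averaged heavy-ball recursion \eqref{eq:bar-polyak} to a perturbed single-node SGD via an auxiliary sequence, and — the new ingredient relative to Algorithm \ref{alg:prsgd-momentum} — controlling the consensus errors $\frac{1}{N}\sum_i\norm{\mb{x}_i^{(t)}-\bar{\mb{x}}^{(t)}}^2$ (and the analogous momentum-buffer deviations) through the coupled geometric contraction induced by $\norm{\mb{W} - \frac{1}{N}\mb{1}\mb{1}\tran}_2 \le \sqrt{\rho}$ from Assumption \ref{ass:mix-matrix}.
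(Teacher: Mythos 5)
Your proposal is correct and is exactly the argument the paper intends: the corollary is an immediate specialization of Theorem \ref{thm:decentralized-rate} obtained by verifying that $\gamma=\sqrt{N}/\sqrt{T}$ satisfies the step-size conditions (which the stated lower bound on $T$ guarantees, the $32$ even covering the slightly stronger requirement $\gamma\leq\frac{(1-\beta)(1-\sqrt{\rho})}{4\sqrt{2}L}$ used in Corollary \ref{cor:x-bar-x-diff-decentralized}) and then substituting into the four error terms. Your bookkeeping of the substitution is accurate.
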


Note that Algorithm \ref{alg:prsgd-decentralized} with $\beta=0$ degrades to the decentralized SGD without momentum considered in \cite{Lian17NIPS}, where the linear speedup of decentralized SGD is first shown.  Recently, many variants of decentralized SGD without momentum with linear speedup have been developed for distributed deep learning \cite{Tang18ArXiv,WangJoshi18ArXiv, Assran18ArXiv}. To our knowledge, this is the first time that decentralized momentum SGD is shown to possess the same linear speedup.

\vspace{-0.8em}
\section{Experiments}
\begin{figure*}[t!]
\begin{subfigure}[t]{0.5\textwidth}
\includegraphics[height=2.35in]{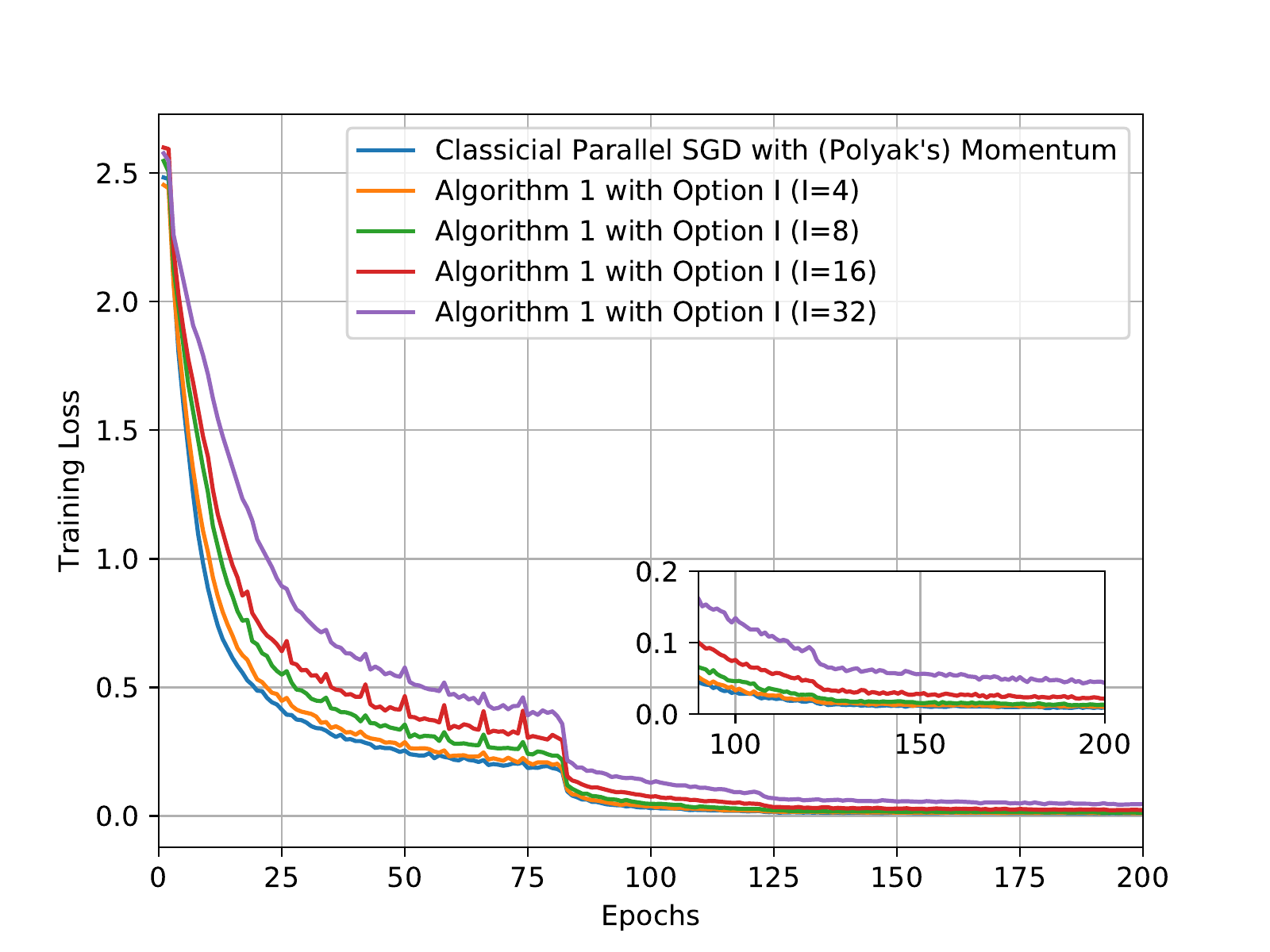}
\caption{Training loss v.s. epochs. }
 \end{subfigure}
 ~
 \begin{subfigure}[t]{0.5\textwidth}
\includegraphics[height=2.35in]{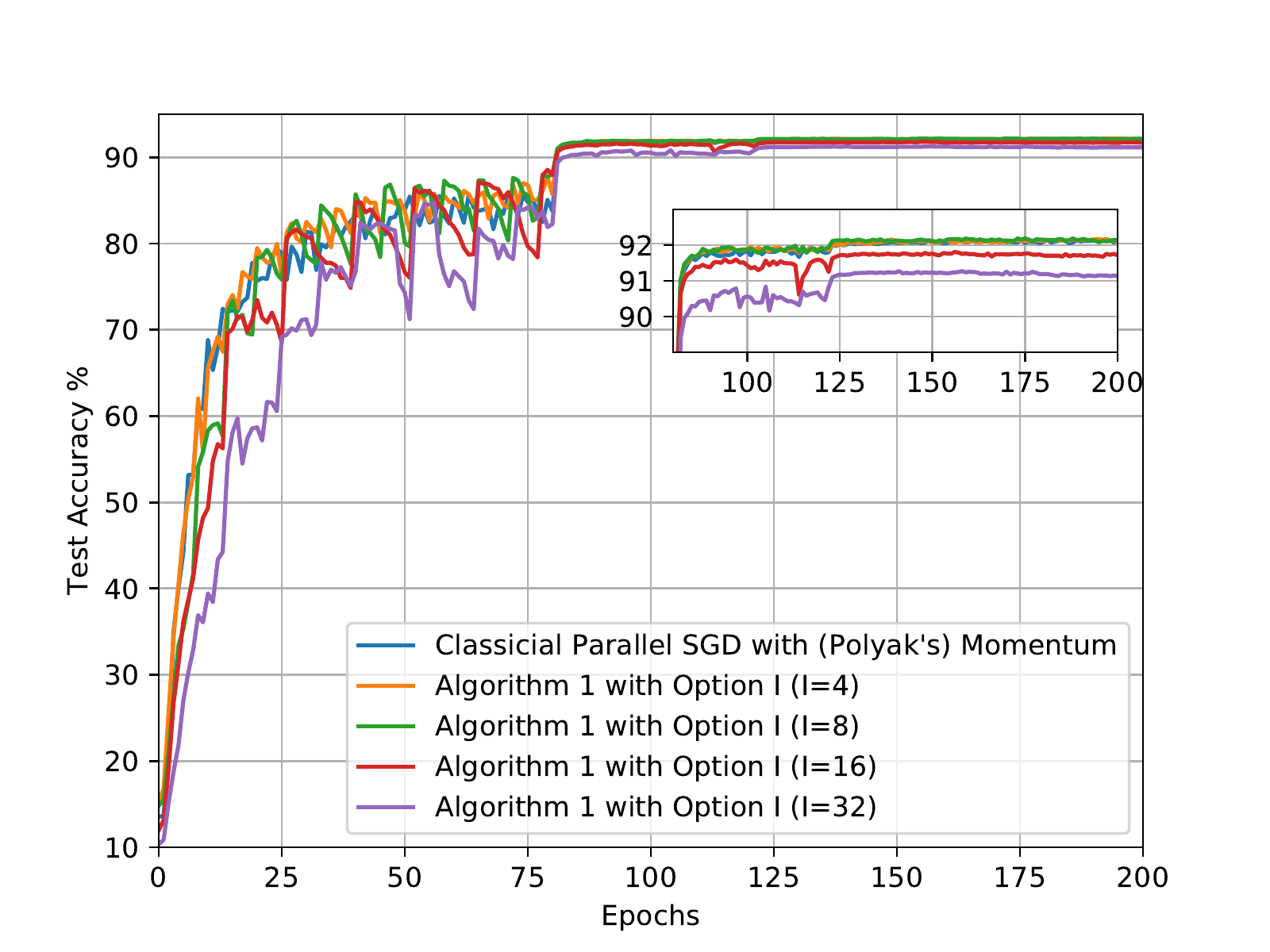}
\caption{Test accuracy v.s. epochs. }
 \end{subfigure}
  \caption{Algorithm \ref{alg:prsgd-momentum} with Option I: convergence v.s. epochs for ResNet56 over CIFAR10. }
   \label{fig:polyak_epoch}
\end{figure*} 
\vspace{-0.8em}
\begin{figure*}[t!]
\begin{subfigure}[t]{0.5\textwidth}
\includegraphics[height=2.4in]{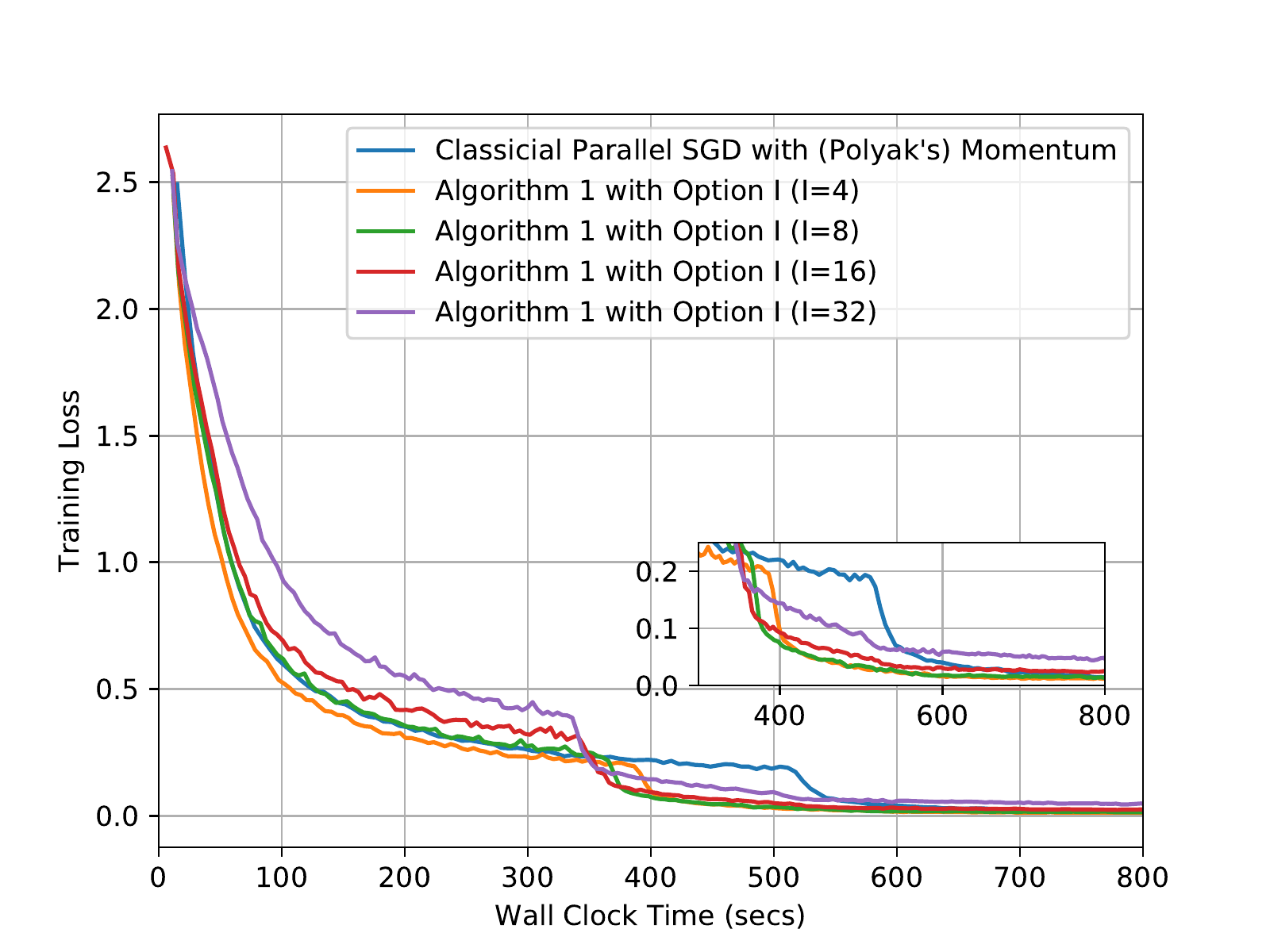}
\caption{Training loss v.s. wall clock time.}
 \end{subfigure}
 ~
 \begin{subfigure}[t]{0.5\textwidth}
\includegraphics[height=2.4in]{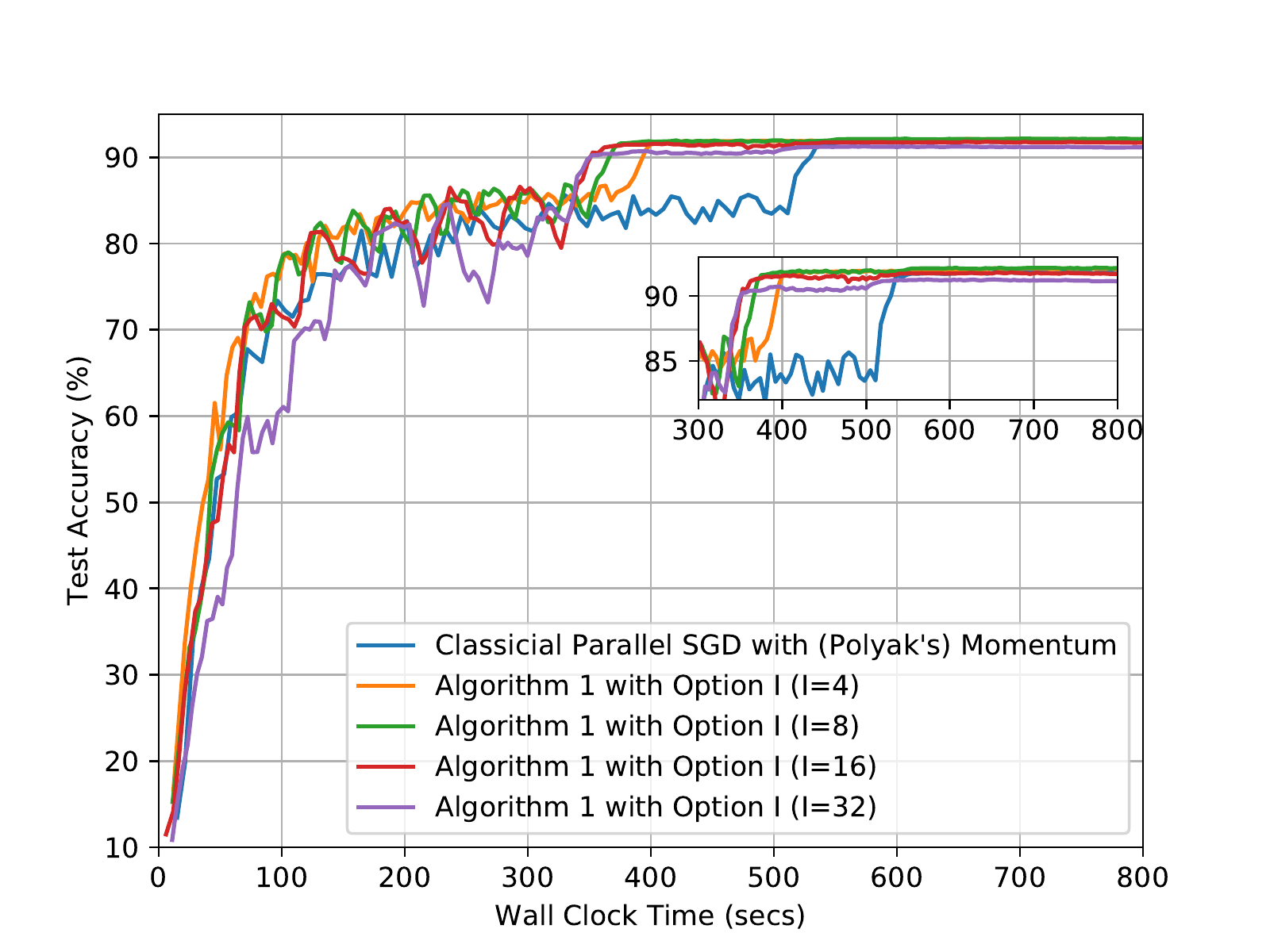}
\caption{Test accuracy v.s. wall clock time. }
 \end{subfigure}
  \caption{Algorithm \ref{alg:prsgd-momentum} with Option I: convergence v.s. wall clock time for ResNet56 over CIFAR10. }
   \label{fig:polyak_wall}
\end{figure*}

In this section, we validate our theory with experiments on training ResNet \cite{He16CVPR} for the image classification tasks over CIFAR-10 \cite{Krizhevsky09} and ImageNet \cite{ImageNet}. Our experiments are conducted on a machine with $8$ NVIDIA P100 GPUs. The local batch size at each GPU is $64$.  The learning rate is initialized to $0.3$ and is divided by $10$ when all GPUs jointly access $80$ and $120$ epochs.\footnote{Such decaying learning rates are used to achieve good test accuracy by practitioners \cite{He16CVPR}. This deviates from the constant learning rates used in the theory to establish the linear speedup. On one hand, it is possible follow the analysis techniques in \cite{Yu18ArXivAAAI} to extend this paper's theory to establish a similar linear speedup. On the other hand, our supplement \ref{sec:exp-clr} reports extra experiments to verify the linear speedup of Algorithm \ref{alg:prsgd-momentum} using constant learning rates faithful to the theory.} The momentum coefficient , i.e., $\beta$ in Algorithm \ref{alg:prsgd-momentum}, is set to $0.9$ for both Polyak's and Nesterov's momentum. All algorithms are implemented using PyTorch $0.4$.  To study how communication skipping affects the convergence of Algorithm \ref{alg:prsgd-momentum}, we run Algorithm \ref{alg:prsgd-momentum} with $I\in\{4,8,16,32\}$ and compare the test accuracy convergence between Algorithm \ref{alg:prsgd-momentum} and the classical parallel mini-batch SGD with momentum, which uses an inter-node communication step at every iteration and can be interpreted as Algorithm \ref{alg:prsgd-momentum} with $I=1$.  Figure \ref{fig:polyak_epoch} plots the convergence of training loss and test accuracy in terms of the number of epochs that are jointly accessed by all used GPUs. That is, if the $x$-axis value is $8$, then each GPU access $1$ epoch of training data. The same convention is followed by other figures for multiple GPU training in this paper. By plotting the convergence in terms of the number of epochs that are jointly accessed by all GPUs, we can verify the $O(\frac{1}{\sqrt{NT}})$ convergence for Algorithm \ref{alg:prsgd-momentum} proven in this paper. To verify the benefit of skipping communication in our Algorithm \ref{alg:prsgd-momentum}, Figure \ref{fig:polyak_wall}  plots the convergence of training loss and test accuracy in terms of the wall clock time. Since Algorithm \ref{alg:prsgd-momentum} with $I>0$ skip communication steps, it is much faster than the classical parallel momentum SGD when measured by the wall clock time.  More numerical experiments on training ResNet over ImageNet, comparisons with a model averaging strategy suggested in \cite{CNTK,WangJoshi18ArXiv2}, and Algorithm \ref{alg:prsgd-momentum} with Nesterov's momentum are available in Supplement \ref{sec:more-exp}.

\vspace{-1em}
\section{Conclusion}
This paper considers parallel restarted SGD with momentum and prove that it can achieve $O(1/\sqrt{NT})$ convergence with $O(N^{3/2}T^{1/2})$ or $O(N^{3/4}T^{3/4})$ communication rounds depending whether each node accesses identical objective functions $f_i(\mb{x})$ or not.  We further show that distributed momentum SGD with decentralized communication can achieve $O(1/\sqrt{NT})$ convergence.

% In the unusual situation where you want a paper to appear in the
% references without citing it in the main text, use \nocite
%\nocite{langley00}
\bibliographystyle{icml2019}
\bibliography{./mybibfile}
%\bibliographystyle{abbrv}
%% argument is your BibTeX string definitions and bibliography database(s) 
 
\onecolumn

\section{Supplement}

\subsection{Proof of Lemma \ref{lm:expected-gradient}} \label{sec:pf-lm-expected-gradient}
The  lemma follows simply  because unbiased stochastic gradients $\mb{g}_i$ are independently sampled. The formal proof is as follows: 
\begin{align}
& \mbb{E}[\norm{\frac{1}{N}\sum_{i=1}^N \mb{g}_i}^2] \nonumber\\ 
\overset{(a)}{=}& \mathbb{E}[ \Vert  \frac{1}{N}\sum_{i=1}^{N} ( \mb{g}_i - \nabla f_{i}(\mathbf{x}_{i})) \Vert^{2}]  +\mathbb{E} [ \Vert \frac{1}{N} \sum_{i=1}^{N} \nabla f_{i}(\mathbf{x}_{i})\Vert^{2}] \nonumber\\
\overset{(b)}{=}& \frac{1}{N^{2}}\sum_{i=1}^{N} \mathbb{E}[\Vert  \mb{g}_i - \nabla f_{i}(\mathbf{x}_{i})\Vert^{2}] +  \mathbb{E} [ \Vert \frac{1}{N} \sum_{i=1}^{N} \nabla f_{i}(\mathbf{x}_{i})\Vert^{2}] \nonumber\\
\overset{(c)}{\leq }& \frac{1}{N}\sigma^{2}  +  \mathbb{E} [ \Vert \frac{1}{N} \sum_{i=1}^{N} \nabla f_{i}(\mathbf{x}_{i})\Vert^{2}] \nonumber
\end{align}
where (a) follows from the facts that $\mathbb{E}[\mb{g}_i] = \nabla f_{i}(\mathbf{x}_{i}), \forall i$ and identity $\mathbb{E}[\Vert \mathbf{Z} \Vert^{2}] = \mathbb{E} [ \Vert \mathbf{\mathbf{Z}} - \mathbb{E}[\mathbf{Z}]\Vert^{2}] + \Vert\mathbb{E}[\mathbf{Z}] \Vert^{2}$ holds for any random vector $\mathbf{Z}$; (b) holds from the facts that $\mb{g}_i - \nabla f_{i}(\mathbf{x}_{i})$ are independent random vectors with $\mathbf{0}$ means and identity $\mbb{E}[\Vert \sum_{i=1}^N \mb{Z}_i \Vert^2] = \sum_{i=1}^N \mbb{E}[\norm{\mb{Z}_i}^2]$ holds when $\mb{Z}_i$ are independent with $\mb{0}$ means; and (c) follows from Assumption \ref{ass:basic}.

\subsection{Proof of Lemma \ref{lm:from-smooth-f}} \label{sec:pf-lm-from-smooth-f}

This lemma follows from simple algebraic manipulations as follows:
	\begin{align}
	&\frac{1}{N}\sum_{i=1}^N \Vert \nabla f_i(\mb{x}_i)- \frac{1}{N}\sum_{j=1}^{N} \nabla f_j(\mb{x}_j)  \Vert^2\nonumber\\
	=&\frac{1}{N}\sum_{i=1}^N  \big\Vert \nabla f_i(\mb{x}_i) - \nabla f_i(\bar{\mb{x}}) + \nabla f_i(\bar{\mb{x}}) - \nabla f(\bar{\mb{x}}) + \nabla f(\bar{\mb{x}}) - \frac{1}{N}\sum_{j=1}^N \nabla f_j (\mb{x}_j)\big\Vert^2 \nonumber \\
	\overset{(a)}{\leq}& \frac{1}{N}\sum_{i=1}^N \Big[3\Vert \nabla f_i(\mb{x}_i) - \nabla f_i(\bar{\mb{x}})\Vert^2 +3\Vert \nabla f_i(\bar{\mb{x}}) - \nabla f(\bar{\mb{x}})\Vert^2 + 3\Vert\nabla f(\bar{\mb{x}}) - \frac{1}{N}\sum_{j=1}^N \nabla f_j (\mb{x}_j)\Vert^2 \Big] \nonumber\\
	\overset{(b)}{\leq}& \frac{1}{N}\sum_{i=1}^N \Big[3L^2\Vert \mb{x}_i - \bar{\mb{x}}\Vert^2 +3\Vert \nabla f_i(\bar{\mb{x}}) - \nabla f(\bar{\mb{x}})\Vert^2 + 3 \frac{1}{N}\sum_{j=1}^N L^2 \Vert \bar{\mb{x}} - \mb{x}_j\Vert^2 \Big] \nonumber\\
	=& \frac{6L^2}{N}\sum_{i=1}^N \Vert \mb{x}_i - \bar{\mb{x}}\Vert^2 +\frac{3}{N}\sum_{i=1}^N\Vert \nabla f_i(\bar{\mb{x}}) - \nabla f(\bar{\mb{x}})\Vert^2  \nonumber
	\end{align}
where (a) follows from the basic inequality $\norm{\mb{a}_1 + \mb{a}_2 + \mb{a}_3}^2 \leq 3 \norm{\mb{a}_1}^2 + 3\norm{\mb{a}_2}^2 + 3\norm{\mb{a}_3}^2$ for any vectors $\mb{a}_1,\mb{a}_2$ and $\mb{a}_3$;  (b) follows because $\Vert \nabla f_i(\mb{x}_i) - \nabla f_i(\bar{\mb{x}})\Vert \leq L \Vert \mb{x}_i -\bar{\mb{x}} \Vert$ by the smoothness of $f_i(\cdot)$ in Assumption \ref{ass:basic} and $\Vert\nabla f(\bar{\mb{x}}) - \frac{1}{N}\sum_{j=1}^N \nabla f_j (\mb{x}_j)\Vert^2 = \Vert \frac{1}{N} \sum_{j=1}^N \nabla f_j(\bar{\mb{x}}) - \frac{1}{N}\sum_{j=1}^N \nabla f_j (\mb{x}_j)\Vert^2 \leq \frac{1}{N} \sum_{j=1}^{N} \Vert \nabla f_j(\bar{\mb{x}})  - \nabla f_j (\mb{x}_j)\Vert^2 \leq \frac{1}{N} \sum_{j=1}^N L^2 \Vert \bar{\mb{x}} - \mb{x}_j\Vert^2$, where the first inequality follows from the convexity of $\Vert \cdot \Vert^2$ and Jensen's inequality and the second inequality follows from the smoothness of each $f_j(\cdot)$.

\subsection{Proof of Theorem \ref{thm:polyak-rate}} \label{sec:pf-thm-polyak-rate}

To prove this theorem, we first introduce an auxiliary sequence $\{\bar{\mb{z}}^{(t)}\}_{t\geq 0}$ given by  
\begin{align}
\bar{\mb{z}}^{(t)} \defeq \left \{  \begin{array}{ll} \bar{\mb{x}}^{(t)}, & \quad t=0 \\
\frac{1}{1-\beta} \bar{\mb{x}}^{(t)} - \frac{\beta}{1-\beta}\bar{\mb{x}}^{(t-1)}, &\quad t\geq 1 \end{array} \right. \label{eq:def-z}
\end{align}
where $\bar{\mb{x}}^{(t)}$ defined in \eqref{eq:def-x-bar} are the node averages of local solutions from Algorithm \ref{alg:prsgd-momentum} with Option I.  A similar auxiliary sequence $\{\bar{\mb{z}}^{(t)}\}_{t\geq 0}$ has been used for the convergence analysis of standard momentum methods (single node without restarting)  in \cite{Ghadimi14ArXiv} and \cite{Yan18IJCAI}.

\subsubsection{Useful Lemmas}
Before the main proof of Theorem \ref{thm:polyak-rate}, we further introduce several useful lemmas.

\begin{Lem}\label{lm:z-diff}
Consider the sequence $\{\bar{\mb{z}}^{(t)}\}_{t\geq 0}$ defined in \eqref{eq:def-z}. Algorithm \ref{alg:prsgd-momentum} with Option I ensures that for all $t\geq 0$, we have 
\begin{align}
\bar{\mb{z}}^{(t+1)} - \bar{\mb{z}}^{(t)} = -\frac{\gamma}{1-\beta}\frac{1}{N}\sum_{i=1}^N \mb{g}_i^{(t)}.
\end{align}
\end{Lem}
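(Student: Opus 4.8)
The plan is to verify the identity by direct computation from the definition of $\bar{\mb{z}}^{(t)}$ in \eqref{eq:def-z}, using the aggregated recursion \eqref{eq:bar-polyak} for $\bar{\mb{x}}^{(t)}$ and $\bar{\mb{u}}^{(t)}$. I would split the argument into the case $t=0$ and the case $t\geq 1$, since \eqref{eq:def-z} is piecewise.

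For $t\geq 1$, starting from $\bar{\mb{z}}^{(t+1)} - \bar{\mb{z}}^{(t)} = \frac{1}{1-\beta}(\bar{\mb{x}}^{(t+1)} - \bar{\mb{x}}^{(t)}) - \frac{\beta}{1-\beta}(\bar{\mb{x}}^{(t)} - \bar{\mb{x}}^{(t-1)})$, I would substitute $\bar{\mb{x}}^{(t+1)} - \bar{\mb{x}}^{(t)} = -\gamma \bar{\mb{u}}^{(t+1)}$ and $\bar{\mb{x}}^{(t)} - \bar{\mb{x}}^{(t-1)} = -\gamma \bar{\mb{u}}^{(t)}$ from the second line of \eqref{eq:bar-polyak}, giving $\bar{\mb{z}}^{(t+1)} - \bar{\mb{z}}^{(t)} = -\frac{\gamma}{1-\beta}(\bar{\mb{u}}^{(t+1)} - \beta \bar{\mb{u}}^{(t)})$. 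Then the first line of \eqref{eq:bar-polyak}, namely $\bar{\mb{u}}^{(t+1)} = \beta \bar{\mb{u}}^{(t)} + \frac{1}{N}\sum_{i=1}^N \mb{g}_i^{(t)}$, shows $\bar{\mb{u}}^{(t+1)} - \beta \bar{\mb{u}}^{(t)} = \frac{1}{N}\sum_{i=1}^N \mb{g}_i^{(t)}$, which yields the claimed identity.

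For $t=0$, I would use $\bar{\mb{z}}^{(1)} - \bar{\mb{z}}^{(0)} = \frac{1}{1-\beta}\bar{\mb{x}}^{(1)} - \frac{\beta}{1-\beta}\bar{\mb{x}}^{(0)} - \bar{\mb{x}}^{(0)} = \frac{1}{1-\beta}(\bar{\mb{x}}^{(1)} - \bar{\mb{x}}^{(0)})$, then $\bar{\mb{x}}^{(1)} - \bar{\mb{x}}^{(0)} = -\gamma\bar{\mb{u}}^{(1)}$, and finally $\bar{\mb{u}}^{(1)} = \beta\bar{\mb{u}}^{(0)} + \frac{1}{N}\sum_{i=1}^N \mb{g}_i^{(0)} = \frac{1}{N}\sum_{i=1}^N \mb{g}_i^{(0)}$ since $\bar{\mb{u}}^{(0)} = \mb{0}$ (from the initialization $\mb{u}_i^{(0)} = \mb{0}$). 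This matches the identity for $t=0$.

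The only subtlety — and it is minor rather than a genuine obstacle — is to make sure the recursion \eqref{eq:bar-polyak} is still valid across synchronization steps \eqref{eq:prsgd-restart}: the averaging step replaces each $\mb{u}_i^{(t)}$ by $\hat{\mb{u}} = \bar{\mb{u}}^{(t)}$ and each $\mb{x}_i^{(t)}$ by $\hat{\mb{x}} = \bar{\mb{x}}^{(t)}$, so the node averages $\bar{\mb{u}}^{(t)}$ and $\bar{\mb{x}}^{(t)}$ are unchanged by restarting, and hence \eqref{eq:bar-polyak} holds for all $t\geq 1$ regardless of synchronization. I would state this observation explicitly so the lemma is airtight, and then the computation above completes the proof.
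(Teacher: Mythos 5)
Your proof is correct and follows essentially the same route as the paper: the paper also splits into the cases $t=0$ and $t\geq 1$, substitutes the aggregated recursion \eqref{eq:bar-polyak} into the definition \eqref{eq:def-z}, and uses $\bar{\mb{u}}^{(0)}=\mb{0}$ for the base case. Your explicit remark that the synchronization step \eqref{eq:prsgd-restart} leaves the node averages $\bar{\mb{u}}^{(t)}$ and $\bar{\mb{x}}^{(t)}$ unchanged is a nice touch that the paper leaves implicit.
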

\begin{proof} To prove $\bar{\mb{z}}^{(t+1)} - \bar{\mb{z}}^{(t)} = -\frac{\gamma}{1-\beta}\frac{1}{N}\sum_{i=1}^N \mb{g}_i^{(t)}$, we consider $t= 0$ and $t\geq 1$ separately.
		\begin{itemize}
			\item {\bf Case $t= 0$}: We have
			\begin{align*}
			\bar{\mb{z}}^{(t+1)} - \bar{\mb{z}}^{(t)} &= \bar{\mb{z}}^{(1)} - \bar{\mb{z}}^{(0)}\\
			&\overset{(a)}{=} \frac{1}{1-\beta} \bar{\mb{x}}^{(1)}-\frac{\beta}{1-\beta}\bar{\mb{x}}^{(0)} - \bar{\mb{x}}^{(0)} \\
			&=\frac{1}{1-\beta} [\bar{\mb{x}}^{(1)} - \bar{\mb{x}}^{(0)}]\\
			&\overset{(b)}{=} -\frac{\gamma}{1-\beta} \frac{1}{N} \sum_{i=1}^{N} \mb{g}_i^{(0)}
			\end{align*}
			where (a) follows from \eqref{eq:def-z}; (b) follows from \eqref{eq:bar-polyak} by noting that $\bar{\mb{u}}^{(0)} =  \mb{0}$.
			\item  {\bf Case $t\geq 1$}: We have
			\begin{align*}
			\bar{\mb{z}}^{(t+1)} - \bar{\mb{z}}^{(t)} \overset{(a)}{=}& \frac{1}{1-\beta} [ \bar{\mb{x}}^{(t+1)} -\bar{\mb{x}}^{(t)}] - \frac{\beta}{1-\beta}[\bar{\mb{x}}^{(t)} - \bar{\mb{x}}^{(t-1)}] \\
			\overset{(b)}{=}&- \frac{\gamma}{1-\beta} \bar{\mb{u}}^{(t+1)} + \frac{\gamma \beta}{1-\beta} \bar{\mb{u}}^{(t)} \\
			\overset{(c)}{=}& -\frac{\gamma}{1-\beta}[ \beta \bar{\mb{u}}^{(t)} + \frac{1}{N}\sum_{i=1}^N \mb{g}_i^{(t)}] + \frac{\gamma \beta}{1-\beta} \bar{\mb{u}}^{(t)}\\
			=& -\frac{\gamma}{1-\beta}\frac{1}{N}\sum_{i=1}^N \mb{g}_i^{(t)}
			\end{align*}
			where (a) follows from \eqref{eq:def-z}; and (b) and (c) follow from \eqref{eq:bar-polyak}.
		\end{itemize} 
\end{proof}

\begin{Lem} \label{lm:z-x-diff}
	Let $\{\bar{\mb{x}}^{(t)}\}_{t\geq 0}$ defined in \eqref{eq:def-x-bar} be the node averages of local solutions from Algorithm \ref{alg:prsgd-momentum} with Option I.  Let $\{\bar{\mb{z}}^{(t)}\}_{t\geq 0}$ be defined in \eqref{eq:def-z}. For all $T\geq 1$, Algorithm \ref{alg:prsgd-momentum} with Option I ensures that 
	\begin{align}
		\sum_{t=0}^{T-1} \norm{ \bar{\mb{z}}^{(t)} - \bar{\mb{x}}^{(t)} }^2 \leq \frac{\gamma^2 \beta^2}{(1-\beta)^4}  \sum_{t=0}^{T-1} \Big\Vert \Big[\frac{1}{N} \sum_{i=1}^N \mb{g}_i^{(t)} \Big] \Big\Vert^2
	\end{align}
\end{Lem}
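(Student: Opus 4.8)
The plan is to reduce the left-hand side to the node-average momentum buffer $\bar{\mb{u}}^{(t)}$ and then unroll its recursion. First I would note that the $t=0$ term vanishes, since $\bar{\mb{z}}^{(0)} = \bar{\mb{x}}^{(0)}$ by \eqref{eq:def-z}. For $t\geq 1$, the definition \eqref{eq:def-z} gives $\bar{\mb{z}}^{(t)} - \bar{\mb{x}}^{(t)} = \frac{\beta}{1-\beta}\big(\bar{\mb{x}}^{(t)} - \bar{\mb{x}}^{(t-1)}\big)$, and then the averaged recursion \eqref{eq:bar-polyak} --- which holds at every iteration, including synchronization steps, because the reset \eqref{eq:prsgd-restart} leaves the node averages $\bar{\mb{u}}^{(t)}$ and $\bar{\mb{x}}^{(t)}$ unchanged, the same reason Lemma~\ref{lm:z-diff} holds --- yields $\bar{\mb{x}}^{(t)} - \bar{\mb{x}}^{(t-1)} = -\gamma\bar{\mb{u}}^{(t)}$. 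Hence
\begin{align}
\sum_{t=0}^{T-1}\norm{\bar{\mb{z}}^{(t)}-\bar{\mb{x}}^{(t)}}^2 = \frac{\gamma^2\beta^2}{(1-\beta)^2}\sum_{t=1}^{T-1}\norm{\bar{\mb{u}}^{(t)}}^2, \nonumber
\end{align}
so it only remains to bound $\sum_{t=1}^{T-1}\norm{\bar{\mb{u}}^{(t)}}^2$.

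Next I would unroll the first line of \eqref{eq:bar-polyak} with the initialization $\bar{\mb{u}}^{(0)} = \mb{0}$. Writing $\mb{G}^{(s)} \defeq \frac{1}{N}\sum_{i=1}^N \mb{g}_i^{(s)}$, this gives $\bar{\mb{u}}^{(t)} = \sum_{s=0}^{t-1}\beta^{t-1-s}\mb{G}^{(s)}$. Since the weights $\beta^{t-1-s}$ sum to $\frac{1-\beta^t}{1-\beta}\leq\frac{1}{1-\beta}$, the weighted Jensen inequality (equivalently Cauchy--Schwarz) gives $\norm{\bar{\mb{u}}^{(t)}}^2 \leq \frac{1}{1-\beta}\sum_{s=0}^{t-1}\beta^{t-1-s}\norm{\mb{G}^{(s)}}^2$.

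Finally I would substitute this into the display above, interchange the order of summation to turn $\sum_{t=1}^{T-1}\sum_{s=0}^{t-1}$ into $\sum_{s=0}^{T-2}\sum_{t=s+1}^{T-1}$, and bound the inner geometric sum $\sum_{t=s+1}^{T-1}\beta^{t-1-s}$ by $\frac{1}{1-\beta}$ once more. Tallying the powers of $(1-\beta)^{-1}$ --- two from the prefactor $\frac{\gamma^2\beta^2}{(1-\beta)^2}$, one from Jensen, one from the geometric sum --- produces exactly $\frac{\gamma^2\beta^2}{(1-\beta)^4}\sum_{s=0}^{T-2}\norm{\mb{G}^{(s)}}^2$, which is at most $\frac{\gamma^2\beta^2}{(1-\beta)^4}\sum_{t=0}^{T-1}\norm{\frac{1}{N}\sum_{i=1}^N\mb{g}_i^{(t)}}^2$, the claimed bound. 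There is no substantive obstacle: the only points requiring care are the bookkeeping of the four factors of $(1-\beta)^{-1}$ and confirming that the restart step \eqref{eq:prsgd-restart} does not perturb the averaged dynamics \eqref{eq:bar-polyak}.
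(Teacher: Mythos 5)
Your proposal is correct and follows essentially the same route as the paper's proof: express $\bar{\mb{z}}^{(t)}-\bar{\mb{x}}^{(t)}=-\frac{\gamma\beta}{1-\beta}\bar{\mb{u}}^{(t)}$, unroll $\bar{\mb{u}}^{(t)}$ as a geometric sum of averaged gradients, apply weighted Jensen, and swap the order of summation, picking up the four factors of $(1-\beta)^{-1}$ exactly as you tally them. Your remark that the restart step \eqref{eq:prsgd-restart} leaves the node averages unchanged (so \eqref{eq:bar-polyak} holds at every iteration) is the right justification and is consistent with how the paper uses that recursion.
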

\begin{proof}
	Recall that $\bar{\mb{u}}^{(0)} = \mb{0}$. Recursively applying the first equation in \eqref{eq:bar-polyak} for $t$ times yields:
	\begin{align}
		\bar{\mb{u}}^{(t)} =  \sum_{\tau=0}^{t-1} \beta^{t-1-\tau} \Big[\frac{1}{N} \sum_{i=1}^N \mb{g}_i^{(\tau)} \Big], \quad \forall t\geq 1 \label{eq:pf-lm-z-x-diff-eq1}
	\end{align}
	Note that for all $t\geq 1$, we have
	\begin{align}
	\bar{\mb{z}}^{(t)} - \bar{\mb{x}}^{(t)} \overset{(a)}{=}  \frac{\beta}{1-\beta} [\bar{\mb{x}}^{(t)} - \bar{\mb{x}}^{t-1}] \overset{(b)}{=}-\frac{\beta \gamma }{1-\beta} \bar{\mb{u}}^{(t)} \label{eq:pf-lm-z-x-diff-eq2}
	\end{align}
	where (a) follows from \eqref{eq:def-z} and (b) follows from the second equation in \eqref{eq:bar-polyak}.

	Combining \eqref{eq:pf-lm-z-x-diff-eq1} and \eqref{eq:pf-lm-z-x-diff-eq2} yields
	\begin{align}
		\bar{\mb{z}}^{(t)} - \bar{\mb{x}}^{(t)} =  -\frac{\beta \gamma }{1-\beta} \sum_{\tau=0}^{t-1} \beta^{t-1-\tau} \Big[\frac{1}{N} \sum_{i=1}^N \mb{g}_i^{(\tau)} \Big], \quad \forall t\geq 1
	\end{align} 
	Define $s_t \defeq \sum_{\tau=0}^{t-1} \beta^{t-1-\tau} = \frac{1-\beta^{t}}{1-\beta}$. For all $t\geq 1$, we have 
	\begin{align}
		\norm{\bar{\mb{z}}^{(t)} - \bar{\mb{x}}^{(t)}}^2 &= \frac{\gamma^2 \beta^2}{(1-\beta)^2}s_t^2 \Big\Vert \sum_{\tau=0}^{t-1} \frac{\beta^{t-1-\tau}}{s_t}  \Big[\frac{1}{N} \sum_{i=1}^N \mb{g}_i^{(\tau)} \Big] \Big\Vert^2 \nonumber\\
		&\overset{(a)}{\leq} \frac{\gamma^2 \beta^2}{(1-\beta)^2} s_t^2 \sum_{\tau=0}^{t-1} \frac{\beta^{t-1-\tau}}{s_t} \Big\Vert \Big[\frac{1}{N} \sum_{i=1}^N \mb{g}_i^{(\tau)} \Big] \Big\Vert^2 \nonumber\\
		&= \frac{\gamma^2 \beta^2(1-\beta^t)}{(1-\beta)^3} \sum_{\tau=0}^{t-1} \beta^{t-1-\tau}\Big\Vert \Big[\frac{1}{N} \sum_{i=1}^N \mb{g}_i^{(\tau)} \Big] \Big\Vert^2 \nonumber\\
		&\leq \frac{\gamma^2 \beta^2}{(1-\beta)^3} \sum_{\tau=0}^{t-1} \beta^{t-1-\tau}\Big\Vert \Big[\frac{1}{N} \sum_{i=1}^N \mb{g}_i^{(\tau)} \Big] \Big\Vert^2 \label{eq:pf-sum-u-bound-eq1}
	\end{align}
	where (a) follows from the convexity of $\Vert \cdot \Vert^2$ and Jensen's inequality.
	
	Fix $T\geq 1$. Note that $\bar{\mb{z}}^{(0)} - \bar{\mb{x}}^{(0)}  = \mb{0}$. Summing \eqref{eq:pf-sum-u-bound-eq1} over $t\in\{1,2,\ldots, T-1\}$ yields
	\begin{align}
	\sum_{t=0}^{T-1}\norm{\bar{\mb{z}}^{(t)} - \bar{\mb{x}}^{(t)}}^2 &\leq \frac{\gamma^2 \beta^2}{(1-\beta)^3}\sum_{t=1}^{T-1} \sum_{\tau=0}^{t-1} \beta^{t-1-\tau}\Big\Vert \Big[\frac{1}{N} \sum_{i=1}^N \mb{g}_i^{(\tau)} \Big] \Big\Vert^2  \nonumber \\
	&=\frac{\gamma^2 \beta^2}{(1-\beta)^3} \sum_{\tau=0}^{T-2} \Big(\Big\Vert \Big[\frac{1}{N} \sum_{i=1}^N \mb{g}_i^{(\tau)} \Big] \Big\Vert^2 \sum_{l=\tau+1}^{T-1} \beta^{l-1-\tau} \Big) \nonumber\\
	&\overset{(a)}{\leq} \frac{\gamma^2 \beta^2}{(1-\beta)^4}  \sum_{\tau=0}^{T-2} \Big\Vert \Big[\frac{1}{N} \sum_{i=1}^N \mb{g}_i^{(\tau)} \Big] \Big\Vert^2\nonumber\\
	&\leq  \frac{\gamma^2 \beta^2}{(1-\beta)^4}  \sum_{\tau=0}^{T-1} \Big\Vert \Big[\frac{1}{N} \sum_{i=1}^N \mb{g}_i^{(\tau)} \Big] \Big\Vert^2
	\end{align}
	where (a) follows by noting that $\sum_{l=\tau+1}^{T-1} \beta^{l-1-\tau} = \frac{1-\beta^{T-\tau-1}}{1-\beta} \leq \frac{1}{1-\beta}$.
\end{proof}

\begin{Lem} \label{lm:diff-avg-per-node}
	Consider problem \eqref{eq:sto-opt} under Assumption \ref{ass:basic}.  Let $\{\bar{\mb{x}}^{(t)}\}_{t\geq 0}$ defined in \eqref{eq:def-x-bar} be the node averages of local solutions from Algorithm \ref{alg:prsgd-momentum} with Option I.  If $\gamma$ and $I$  are chosen to satisfy $\frac{12L^2\gamma^2I^2}{(1-\beta)^2} < 1$, then for all $T\geq 1$, we have 
	\begin{align*}
	\sum_{t=0}^{T-1}\frac{1}{N}\sum_{i=1}^{N}\mbb{E}[\norm{\bar{\mb{x}}^{(t)}  - \mb{x}_i^{(t)}}^2] \leq  \frac{1}{1-\frac{12L^2\gamma^2I^2}{(1-\beta)^2}} \frac{2 \gamma^2 I \sigma^2}{(1-\beta)^2} T+ \frac{1}{1- \frac{12L^2\gamma^2I^2}{(1-\beta)^2}} \frac{6\gamma^2I^2 \kappa^2}{(1-\beta)^2} T
	\end{align*}
\end{Lem}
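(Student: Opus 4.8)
The quantity to control is the average squared deviation of each worker's iterate from the node average, $\frac{1}{N}\sum_i \norm{\bar{\mb{x}}^{(t)} - \mb{x}_i^{(t)}}^2$, summed over $t$. The key structural fact is that the restart step \eqref{eq:prsgd-restart} forces this deviation to zero at every multiple of $I$, so within any block of at most $I$ consecutive iterations we can expand $\mb{x}_i^{(t)} - \bar{\mb{x}}^{(t)}$ as an accumulated sum of (momentum-filtered) stochastic gradient differences since the last restart. I would first fix an iteration $t$, let $t_0 = \lfloor t/I\rfloor I$ be the most recent restart time, and write out $\mb{x}_i^{(t)}$ and $\bar{\mb{x}}^{(t)}$ using the single-variable Polyak recursion \eqref{eq:polyak-1-variable-version} (equivalently, unrolling the momentum buffer via the identity $\mb{u}_i^{(t)} = \sum_{\tau} \beta^{t-1-\tau}\mb{g}_i^{(\tau)}$ as in the proof of Lemma~\ref{lm:z-x-diff}). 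The difference $\mb{x}_i^{(t)} - \bar{\mb{x}}^{(t)}$ then becomes $-\gamma$ times a weighted sum over $\tau \in \{t_0,\dots,t-1\}$ of $\big(\mb{g}_i^{(\tau)} - \frac1N\sum_j \mb{g}_j^{(\tau)}\big)$ with coefficients that are partial geometric sums of $\beta$, each bounded by $\frac{1}{1-\beta}$.

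**Key steps.** (1) Unroll to get $\mb{x}_i^{(t)} - \bar{\mb{x}}^{(t)} = -\gamma \sum_{\tau=t_0}^{t-1} c_{t,\tau}\big(\mb{g}_i^{(\tau)} - \bar{\mb{g}}^{(\tau)}\big)$ with $0 \le c_{t,\tau} \le \frac{1}{1-\beta}$, where $\bar{\mb{g}}^{(\tau)} \defeq \frac1N\sum_j \mb{g}_j^{(\tau)}$. (2) Apply Jensen/Cauchy-Schwarz over the at-most-$I$ terms: $\norm{\mb{x}_i^{(t)} - \bar{\mb{x}}^{(t)}}^2 \le \frac{\gamma^2 I}{(1-\beta)^2}\sum_{\tau=t_0}^{t-1}\norm{\mb{g}_i^{(\tau)} - \bar{\mb{g}}^{(\tau)}}^2$. (3) Average over $i$, take expectation, and split each $\mb{g}_i^{(\tau)} - \bar{\mb{g}}^{(\tau)}$ into its stochastic part (using $\expect{\norm{\cdot - \text{mean}}^2}\le\sigma^2$-type bounds and $\frac1N\sum_i\norm{\mb{g}_i^{(\tau)}-\bar{\mb{g}}^{(\tau)}}^2 \le \frac1N\sum_i\norm{\mb{g}_i^{(\tau)}-\nabla f_i(\mb{x}_i^{(\tau)})}^2 + \frac1N\sum_i\norm{\nabla f_i(\mb{x}_i^{(\tau)}) - \frac1N\sum_j\nabla f_j(\mb{x}_j^{(\tau)})}^2$, dropping the cross term appropriately) into a $\sigma^2$ piece bounded by roughly $2\sigma^2$ and a deterministic-gradient-deviation piece. (4) Invoke Lemma~\ref{lm:from-smooth-f} on the deterministic piece to bound $\frac1N\sum_i\norm{\nabla f_i(\mb{x}_i^{(\tau)}) - \frac1N\sum_j \nabla f_j(\mb{x}_j^{(\tau)})}^2 \le \frac{6L^2}{N}\sum_i\norm{\mb{x}_i^{(\tau)}-\bar{\mb{x}}^{(\tau)}}^2 + 3\kappa^2$ using Assumption~\ref{ass:basic}. (5) Sum over $t=0,\dots,T-1$; since each $\tau$ lies in at most $I$ of the blocks, the double sum $\sum_t\sum_{\tau=t_0(t)}^{t-1}$ contributes a factor $I$, producing a self-referential inequality $S \le \frac{2\gamma^2 I}{(1-\beta)^2}\sigma^2 T + \frac{6\gamma^2 I^2}{(1-\beta)^2}\kappa^2 T + \frac{12L^2\gamma^2 I^2}{(1-\beta)^2} S$ where $S \defeq \sum_{t=0}^{T-1}\frac1N\sum_i\expect{\norm{\bar{\mb{x}}^{(t)}-\mb{x}_i^{(t)}}^2}$. (6) Solve for $S$ under the assumed contraction condition $\frac{12L^2\gamma^2 I^2}{(1-\beta)^2}<1$, dividing through by $1 - \frac{12L^2\gamma^2 I^2}{(1-\beta)^2}$, which yields exactly the claimed bound.

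**Main obstacle.** The delicate part is bookkeeping the combinatorics in step (5): ensuring the factor of $I$ (not $I^2$) appears in the $\sigma^2$ term while $I^2$ appears in the $\kappa^2$ term, and correctly threading the self-reference so that the $S$-term on the right has coefficient exactly $\frac{12L^2\gamma^2 I^2}{(1-\beta)^2}$. This requires carefully tracking that the per-iteration Jensen bound in step (2) already costs one factor of $I$, then the block-overlap sum in step (5) costs another — but only the $\kappa^2$ term (being a flat constant per $\tau$) picks up both, whereas the $\sigma^2$ term is summed with independence and only needs one. A secondary subtlety is justifying the independence/unbiasedness manipulations for the $\sigma^2$ term when the $\mb{g}_i^{(\tau)}$ are sampled at the random points $\mb{x}_i^{(\tau)}$; this is handled by conditioning on the filtration up to iteration $\tau$ and using that, conditionally, $\mb{g}_i^{(\tau)} - \nabla f_i(\mb{x}_i^{(\tau)})$ are zero-mean and independent across $i$, exactly as in the proof of Lemma~\ref{lm:expected-gradient}.
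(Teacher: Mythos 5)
Your overall route is the same as the paper's: exploit the restart at multiples of $I$ to unroll $\mb{x}_i^{(t)}-\bar{\mb{x}}^{(t)}$ as $-\gamma\sum_{\tau=t_0}^{t-1}\frac{1-\beta^{t-\tau}}{1-\beta}\big(\mb{g}_i^{(\tau)}-\frac{1}{N}\sum_j\mb{g}_j^{(\tau)}\big)$ (the $\hat{\mb{u}}$ terms cancel), split into a stochastic-noise piece and a gradient-heterogeneity piece, invoke Lemma~\ref{lm:from-smooth-f}, and close a self-referential inequality in $S=\sum_t\frac{1}{N}\sum_i\mbb{E}[\norm{\bar{\mb{x}}^{(t)}-\mb{x}_i^{(t)}}^2]$ under the contraction condition. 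Steps (1), (4), (5), (6) and the final division all match the paper.

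However, steps (2) and (3) are in the wrong order, and as written the plan does not prove the stated bound. In step (2) you apply Jensen/Cauchy--Schwarz to the \emph{entire} sum over $\tau$, paying a factor of $I$ on every term, and only in step (3) do you separate $\mb{g}_i^{(\tau)}-\bar{\mb{g}}^{(\tau)}$ into noise and deterministic parts. Once Jensen has been applied to the full sum, the noise contribution per iteration $t$ is already $\frac{\gamma^2 I}{(1-\beta)^2}\cdot(t-t_0)\cdot O(\sigma^2)=O\big(\frac{\gamma^2 I^2\sigma^2}{(1-\beta)^2}\big)$, so summing over $t$ gives $O\big(\frac{\gamma^2 I^2\sigma^2}{(1-\beta)^2}T\big)$ --- a factor $I$ worse than the lemma's $\frac{2\gamma^2 I\sigma^2}{(1-\beta)^2}T$. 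Your ``main obstacle'' paragraph correctly states that the $\sigma^2$ term must only pick up one factor of $I$ ``by independence,'' but that is unattainable after step (2): Jensen has destroyed the sum structure you would need. The fix, which is what the paper does, is to decompose the \emph{summed} difference first, via $\norm{\mb{a}_1+\mb{a}_2}^2\leq 2\norm{\mb{a}_1}^2+2\norm{\mb{a}_2}^2$ with $\mb{a}_1=\sum_\tau\big[(\mb{g}_i^{(\tau)}-\nabla f_i(\mb{x}_i^{(\tau)}))-\frac{1}{N}\sum_j(\mb{g}_j^{(\tau)}-\nabla f_j(\mb{x}_j^{(\tau)}))\big]\frac{1-\beta^{t-\tau}}{1-\beta}$ and $\mb{a}_2$ the summed deterministic deviations; then for $\mb{a}_1$ the cross terms across $\tau$ vanish in expectation (martingale orthogonality), so $\mbb{E}[\norm{\mb{a}_1}^2]$ equals the sum of at most $I$ individual variances, each at most $\sigma^2/(1-\beta)^2$, with no Jensen factor; only $\mb{a}_2$ receives the $\norm{\sum_{k=1}^n\cdot}^2\leq n\sum\norm{\cdot}^2$ bound, which is where the extra $I$ on the $\kappa^2$ term (and on the self-referential $S$ term) comes from. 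With that reordering the rest of your plan goes through and reproduces the paper's constants exactly.
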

\begin{proof}
	Since $\bar{\mb{x}}^{(t)}  - \mb{x}_i^{(t)}=\mb{0}$ when $t$ is a multiple of $I$, our focus is to develop upper bounds for $\frac{1}{N}\sum_{i=1}^{N}\mbb{E}[\norm{\bar{\mb{x}}^{(t)}  - \mb{x}_i^{(t)}}^2] $ when $t$ is not a multiple of $I$. Consider $t \geq 1$ that is not a multiple of $I$. Note that Algorithm \ref{alg:prsgd-momentum} restarts ``SGD with momentum" every $I$ iterations (by resetting $\mb{u}_i^{(t)}  =  \hat{\mb{u}}\defeq \frac{1}{N} \sum_{j=1}^{N} \mb{u}_j^{(t)}$ and $\mb{x}_i^{(t)} = \hat{\mathbf{x}} \defeq \frac{1}{N} \sum_{j=1}^{N} \mathbf{x}_{j}^{(t)}$).  Let $t_{0} < t$ be the largest iteration index that is a multiple of $I$, i.e., $t_0~\text{mod}~I = 0$. Note that we must have $t - t_0 < I$ and $\mb{u}_i^{(t_0)} = \hat{\mb{u}}, \mb{x}_i^{(t_0)} = \hat{\mb{x}}$.  For any $\tau\in\{t_0+1, \ldots, t\}$, by recursively applying the first equation in \eqref{eq:prsgd-polyak}, we have 
	\begin{align}
	\mb{u}_i^{(\tau)} = \beta^{\tau-t_0} \hat{\mb{u}} + \sum_{k=t_0}^{\tau-1} \beta^{\tau-1-k} \mb{g}_i^{(k)} \label{eq:pf-diff-avg-per-node-eq0}
	\end{align}
	For any  $\tau\in\{t_0+1, \ldots, t\}$, by the second equation in \eqref{eq:prsgd-polyak}, we have
	\begin{align}
	\mb{x}_i^{(\tau)} -\mb{x}_i^{(\tau-1)} = -\gamma \mb{u}_i^{(\tau)}
	\end{align}	 
	Summing over $\tau\in\{t_0+1, \ldots, t\}$ and noting that $\mb{x}_i^{(t_0)} = \hat{\mb{x}}$ yields
	\begin{align}
	\mb{x}_i^{(t)} &= \hat{\mb{x}} -\gamma \sum_{\tau=t_0+1}^{t} \mb{u}_i^{(\tau)} \nonumber\\
	&\overset{(a)}{=} \hat{\mb{x}} -\gamma \big(\sum_{\tau=t_0+1}^{t} \beta^{\tau-t_0} \big)\hat{\mb{u}} - \gamma \sum_{\tau = t_0+1}^{t} \sum_{k=t_0}^{\tau-1} \beta^{\tau-1-k} \mb{g}_i^{(k)} \nonumber \\
	&= \hat{\mb{x}} -\gamma\big(\sum_{j=1}^{t-t_0} \beta^{j} \big) \hat{\mb{u}} - \gamma \sum_{k = t_0}^{t-1} \sum_{j=0}^{t-1-k} \beta^{j} \mb{g}_i^{(k)} \nonumber\\
	&= \hat{\mb{x}} -\gamma\big(\sum_{j=1}^{t-t_0} \beta^{j} \big)  \hat{\mb{u}} - \gamma \sum_{\tau = t_0}^{t-1} \frac{1-\beta^{t-\tau}}{1-\beta} \mb{g}_i^{(\tau)} \label{eq:pf-diff-avg-per-node-eq1}
	\end{align}
	where (a) follows by substituting \eqref{eq:pf-diff-avg-per-node-eq0}.

	Using an argument similar to the above, we can further show 
	\begin{align}
	\bar{\mb{x}}^{(t)} = \hat{\mb{x}} -\gamma \big(\sum_{j=1}^{t-t_0} \beta^{j} \big) \hat{\mb{u}} - \gamma \sum_{\tau=t_0}^{t-1}\frac{1-\beta^{t-\tau}}{1-\beta} \frac{1}{N}\sum_{i=1}^N\mb{g}_i^{(\tau)} \label{eq:pf-diff-avg-per-node-eq2} 
	\end{align}
	Combining \eqref{eq:pf-diff-avg-per-node-eq1} and \eqref{eq:pf-diff-avg-per-node-eq2} yields
	\begin{align}
	&\frac{1}{N}\sum_{i=1}^{N}\mbb{E}[\norm{\bar{\mb{x}}^{(t)}  - \mb{x}_i^{(t)}}^2] \nonumber\\
	=& \gamma^2 \frac{1}{N} \sum_{i=1}^N \mbb{E}\Big[\Big\Vert \sum_{\tau = t_0}^{t-1} \big[ \mb{g}_i^{(\tau)} - \frac{1}{N} \sum_{j=1}^N \mb{g}_j^{(\tau)}\big]\frac{1-\beta^{t-\tau}}{1-\beta} \Big\Vert^2\Big] \nonumber\\
	\overset{(a)}{\leq}& 2\gamma^2\frac{1}{N}\sum_{i=1}^N\mbb{E}\Big[\Big\Vert \sum_{\tau = t_0}^{t-1} \big[ [\mb{g}_i^{(\tau)} - \nabla f_i(\mb{x}_i^{(\tau)}) ] - \frac{1}{N} \sum_{j=1}^N [\mb{g}_j^{(\tau)} - \nabla f_j(\mb{x}_j^{(\tau)})] \big]\frac{1-\beta^{t-\tau}}{1-\beta} \Big\Vert^2\Big]  \nonumber \\&+ 2\gamma^2 \frac{1}{N}\sum_{i=1}^N \mbb{E}\Big[\Big\Vert   \sum_{\tau=t_0}^{t-1} \big[\nabla f_i(\mb{x}_i^{(\tau)})- \frac{1}{N}\sum_{j=1}^{N} \nabla f_j(\mb{x}_j^{(\tau)}) \big]\frac{1-\beta^{t-\tau}}{1-\beta} \Big\Vert^2\Big] \label{eq:diff-avg-per-node-eq3}
	\end{align}
	where (a) follows from the basic inequality $\norm{\mb{a}_1 + \mb{a}_2}^2 \leq 2 \norm{\mb{a}_1}^2 + 2\norm{\mb{a}_2}^2$.

	Now we develop the respective upper bounds of the two terms on the right side of \eqref{eq:diff-avg-per-node-eq3}. We note that 
	\begin{align}
	&\frac{1}{N}\sum_{i=1}^N\mbb{E}\Big[\Big\Vert \sum_{\tau = t_0}^{t-1} \big[ [\mb{g}_i^{(\tau)} - \nabla f_i(\mb{x}_i^{(\tau)}) ] - \frac{1}{N} \sum_{j=1}^N [\mb{g}_j^{(\tau)} - \nabla f_j(\mb{x}_j^{(\tau)})] \big]\frac{1-\beta^{t-\tau}}{1-\beta} \Big\Vert^2\Big]  \nonumber\\
	\overset{(a)}{\leq}& \frac{1}{N}\sum_{i=1}^N\mbb{E}\Big[\Big\Vert \sum_{\tau = t_0}^{t-1} \big[ \mb{g}_i^{(\tau)} - \nabla f_i(\mb{x}_i^{(\tau)})  \big]\frac{1-\beta^{t-\tau}}{1-\beta} \Big\Vert^2\Big]  \nonumber\\
	\overset{(b)}{=} & \frac{1}{N}\sum_{i=1}^N \sum_{\tau = t_0}^{t-1} \mbb{E}\Big[\Big\Vert  \big[ \mb{g}_i^{(\tau)} - \nabla f_i(\mb{x}_i^{(\tau)})  \big]\frac{1-\beta^{t-\tau}}{1-\beta} \Big\Vert^2\Big]  \nonumber \\
	\overset{(c)}{\leq} &  \frac{I\sigma^2}{(1-\beta)^2}  \label{eq:diff-avg-per-node-eq4}
	\end{align}
	where (a) follows from the inequality $\frac{1}{N}\sum_{i=1}^N \norm{\mb{a}_i - [\frac{1}{N}\sum_{j=1}^N \mb{a}_j] }^2 = \frac{1}{N}\sum_{i=1}^{N} \norm{\mb{a}_i}^2 - \norm{\frac{1}{N}\sum_{i=1}^{N} \mb{a}_i}^2 \leq  \frac{1}{N}\sum_{i=1}^{N} \norm{\mb{a}_i}^2$ with $\mb{a}_i = \sum_{\tau = t_0}^{t-1}[ \mb{g}_i^{(\tau)} - \nabla f_i(\mb{x}_i^{(\tau)})]\frac{1-\beta^{t-\tau}}{1-\beta}$; (b) follows because $\mbb{E}[ \mb{g}_i^{(\tau_2)} - \nabla f_i(\mb{x}_i^{(\tau_2)}) \big\vert  \mb{g}_i^{(\tau_1)} - \nabla f_i(\mb{x}_i^{(\tau_1)}) ] = \mb{0}$ for any $\tau_2 > \tau_1$; (c) follows because $\abs{\frac{1-\beta^{t-\tau}}{1-\beta} } \leq \frac{1}{1-\beta}$, $t-t_0 < I$ and $\mbb{E}[\Vert \mb{g}_i^{(\tau)} - \nabla f_i(\mb{x}_i^{(\tau)})\Vert^2] \leq \sigma^2$ (by Assumption \ref{ass:basic}).
	
	We further note that 
	\begin{align}
	& \frac{1}{N}\sum_{i=1}^N \mbb{E}\Big[\Big\Vert   \sum_{\tau=t_0}^{t-1} \big[\nabla f_i(\mb{x}_i^{(\tau)})- \frac{1}{N}\sum_{j=1}^{N} \nabla f_j(\mb{x}_j^{(\tau)}) \big]\frac{1-\beta^{t-\tau}}{1-\beta} \Big\Vert^2\Big]\nonumber\\
	\overset{(a)}{\leq} & \frac{1}{N}\sum_{i=1}^N (t-t_0)\frac{1}{(1-\beta)^2}\sum_{\tau=t_0}^{t-1} \mbb{E}\Big[\Big\Vert  \nabla f_i(\mb{x}_i^{(\tau)})- \frac{1}{N}\sum_{j=1}^{N} \nabla f_j(\mb{x}_j^{(\tau)})  \Big\Vert^2\Big]\nonumber\\
	\overset{(b)}{\leq}&\frac{I}{(1-\beta)^2}\sum_{\tau=t_0}^{t-1}\frac{1}{N}\sum_{i=1}^N  \mbb{E}\Big[\Big\Vert    \big[\nabla f_i(\mb{x}_i^{(\tau)})- \frac{1}{N}\sum_{j=1}^{N} \nabla f_j(\mb{x}_j^{(\tau)}) \big] \Big\Vert^2\Big] \nonumber\\
	\overset{(c)}{\leq}& \frac{6L^2I}{(1-\beta)^2}\sum_{\tau=t_0}^{t-1} \frac{1}{N}\sum_{i=1}^N \mbb{E}\big[\Vert \mb{x}_i^{(\tau)} - \bar{\mb{x}}^{(\tau)}\Vert^2\big] +  \frac{3I}{(1-\beta)^2}\sum_{\tau=t_0}^{t-1} \frac{1}{N}\sum_{i=1}^N\mbb{E}\big[\Vert \nabla f_i(\bar{\mb{x}}^{(\tau)}) - \nabla f(\bar{\mb{x}}^{(\tau)})\Vert^2 \big] \nonumber\\
	\overset{(d)}{\leq}& \frac{6L^2I}{(1-\beta)^2}\sum_{\tau=t_0}^{t-1} \frac{1}{N}\sum_{i=1}^N \mbb{E}\big[\Vert \mb{x}_i^{(\tau)} - \bar{\mb{x}}^{(\tau)}\Vert^2\big] +  \frac{3I^2 \kappa^2}{(1-\beta)^2} \label{eq:diff-avg-per-node-eq5}
	\end{align} 
	where (a) follows by applying the basic inequality $\norm{\sum_{i=1}^{n} \mb{a}_i}^2 \leq n \sum_{i=1}^n \norm{\mb{a}_i}^2$ for any vectors $\mb{a}_i$ and any integer $n$ and noting that $\abs{\frac{1-\beta^{t-\tau}}{1-\beta}} \leq \frac{1}{1-\beta}$; (b) follows because $0 <  t-t_0 <  I$; (c) follows from Lemma \ref{lm:from-smooth-f};  and (d) follows because $\frac{1}{N}\sum_{i=1}^N\mbb{E}\big[\Vert \nabla f_i(\bar{\mb{x}}^{(\tau)}) - \nabla f(\bar{\mb{x}}^{(\tau)})\Vert^2 \big] \leq \kappa^2$ by Assumption \ref{ass:basic} and $0 < t-t_0 < I$.

	Substituting \eqref{eq:diff-avg-per-node-eq4}-\eqref{eq:diff-avg-per-node-eq5} into \eqref{eq:diff-avg-per-node-eq3} yields
	\begin{align}
	\frac{1}{N}\sum_{i=1}^{N}\mbb{E}[\norm{\bar{\mb{x}}^{(t)}  - \mb{x}_i^{(t)}}^2] \leq   \frac{2\gamma^2 I \sigma^2}{(1-\beta)^2} + \frac{12L^2\gamma^2I}{(1-\beta)^2}\sum_{\tau=t_0}^{t-1} \frac{1}{N}\sum_{i=1}^N \mbb{E}\big[\Vert \mb{x}_i^{(\tau)} - \bar{\mb{x}}^{(\tau)}\Vert^2\big] +  \frac{6\gamma^2I^2 \kappa^2}{(1-\beta)^2} \label{eq:diff-avg-per-node-eq6}
	\end{align}
	Recall that for each $t$, the index $t_0$ in the above equation is the largest integer such that $t_0 < t$,  $t_0~\text{mod}~I = 0$ and $t-t_0 <  I$. Summing \eqref{eq:diff-avg-per-node-eq6} over $t\in \{0,1, \ldots, T\}$ that are not a multiple of $I$ and noting that $\bar{\mb{x}}^{(t)}  - \mb{x}_i^{(t)}=\mb{0}$ if $t$ is a multiple of $I$ yields
	\begin{align}
	\sum_{t=0}^{T-1}\frac{1}{N}\sum_{i=1}^{N}\mbb{E}[\norm{\bar{\mb{x}}^{(t)}  - \mb{x}_i^{(t)}}^2] \leq   \frac{2 \gamma^2 I \sigma^2}{(1-\beta)^2}T+ \frac{12L^2\gamma^2I^2}{(1-\beta)^2}\sum_{t=0}^{T-1}\frac{1}{N}\sum_{i=1}^{N}\mbb{E}[\norm{\bar{\mb{x}}^{(t)}  - \mb{x}_i^{(t)}}^2] +  \frac{6\gamma^2I^2 \kappa^2}{(1-\beta)^2}T\nonumber
	\end{align}
	Collecting common terms and dividing both sides by $1-\frac{12L^2\gamma^2I^2}{(1-\beta)^2}$ yields
	\begin{align}
	\sum_{t=0}^{T-1}\frac{1}{N}\sum_{i=1}^{N}\mbb{E}[\norm{\bar{\mb{x}}^{(t)}  - \mb{x}_i^{(t)}}^2] \leq  \frac{1}{1-\frac{12L^2\gamma^2I^2}{(1-\beta)^2}} \frac{2 \gamma^2 I \sigma^2}{(1-\beta)^2}T + \frac{1}{1- \frac{12L^2\gamma^2I^2}{(1-\beta)^2}} \frac{6\gamma^2I^2 \kappa^2}{(1-\beta)^2}T \label{eq:diff-avg-per-node-eq7}
	\end{align}
\end{proof}

\subsubsection{Main Proof of Theorem \ref{thm:polyak-rate}} \label{sec:main-pf-thm-polyak-rate}

With the above lemmas, we are ready to present the main proof of Theorem \ref{thm:polyak-rate}.

Fix $t\geq 0$. By the smoothness of function $f(\cdot)$ (in Assumption \ref{ass:basic}), we have 
\begin{align}
\mbb{E}[f(\bar{\mb{z}}^{(t+1)})] \leq \mbb{E}[f(\bar{\mb{z}}^{(t)})] + \mbb{E}[\langle \nabla f(\bar{\mb{z}}^{(t)}), \bar{\mb{z}}^{(t+1)} - \bar{\mb{z}}^{(t)}\rangle] + \frac{L}{2} \mbb{E}[\norm{\bar{\mb{z}}^{(t+1)} - \bar{\mb{z}}^{(t)}}^2]  \label{eq:pf-thm-rate-eq1}
\end{align}

By Lemma \ref{lm:z-diff}, we have 
\begin{align}
&\mbb{E}[\langle \nabla f(\bar{\mb{z}}^{(t)}), \bar{\mb{z}}^{(t+1)} - \bar{\mb{z}}^{(t)}\rangle] \nonumber\\
=& -\frac{\gamma}{1-\beta} \mbb{E}[\langle \nabla f(\bar{\mb{z}}^{(t)}), \frac{1}{N}\sum_{i=1}^N\mb{g}_i^{(t)}\rangle]\nonumber\\
\overset{(a)}{=}& -\frac{\gamma}{1-\beta} \mbb{E}[\langle \nabla f(\bar{\mb{z}}^{(t)}), \frac{1}{N}\sum_{i=1}^N\nabla f_i(\mb{x}_i^{(t)})\rangle] \nonumber \\
=& -\frac{\gamma}{1-\beta} \mbb{E}[\langle \nabla f(\bar{\mb{z}}^{(t)}) - \nabla f(\bar{\mb{x}}^{(t)}), \frac{1}{N}\sum_{i=1}^N\nabla f_i(\mb{x}_i^{(t)})\rangle] -\frac{\gamma}{1-\beta} \mbb{E}[\langle \nabla f(\bar{\mb{x}}^{(t)}), \frac{1}{N}\sum_{i=1}^N\nabla f_i(\mb{x}_i^{(t)})\rangle] \label{eq:pf-thm-rate-eq2}
\end{align}
where (a) follows because $\bar{\mb{z}}^{(t)}$ and $\mb{x}_i^{(t)}$ are determined by $\bs{\xi}^{[t-1]} = [\bs{\xi}^{(0)}, \ldots, \bs{\xi}^{(t-1)}]$, which is independent of $\bs{\xi}^{(t)}$, and $\mbb{E}[\mb{g}_i^{(t)} | \bs{\xi}^{[t-1]}] = \mbb{E}[\mb{g}_i^{(t)}] = \nabla f_i (\mb{x}_i^{(t)})$.

We note that 
\begin{align}
&-\frac{\gamma}{1-\beta} \langle \nabla f(\bar{\mb{z}}^{(t)})-\nabla f(\bar{\mb{x}}^{(t)}), \frac{1}{N}\sum_{i=1}^N\nabla f_i(\mb{x}_i^{(t)})\rangle  \nonumber\\
\overset{(a)}{\leq}& \frac{1-\beta}{2\beta L} \norm{\nabla f(\bar{\mb{z}}^{(t)})-\nabla f(\bar{\mb{x}}^{(t)})}^2 + \frac{\beta L \gamma^2}{2(1-\beta)^3} \norm{\frac{1}{N}\sum_{i=1}^N\nabla f_i (\mb{x}_i^{(t)})}^2 \nonumber \\
\overset{(b)}{\leq}& \frac{(1-\beta)L}{2\beta} \norm{ \bar{\mb{z}}^{(t)}- \bar{\mb{x}}^{(t)}}^2 +\frac{\beta L \gamma^2}{2(1-\beta)^3} \norm{\frac{1}{N}\sum_{i=1}^N\nabla f_i (\mb{x}_i^{(t)})}^2 \label{eq:pf-thm-rate-eq3}
\end{align}
where (a) follows by applying the basic inequality $\langle \mb{a}, \mb{b}\rangle \leq \frac{1}{2}\norm{\mb{a}}^2 + \frac{1}{2}\norm{\mb{b}}^2$ with $\mb{a} =- \frac{\sqrt{1-\beta}}{\sqrt{\beta L}} [\nabla f(\bar{\mb{z}}^{(t)})-\nabla f(\bar{\mb{x}}^{(t)})]$ and $\mb{b} = \frac{\gamma \sqrt{\beta L}}{(1-\beta)^{3/2}} \frac{1}{N}\sum_{i=1}^N\nabla f_i(\mb{x}_i^{(t)})$; and (b) follows from the smoothness of function $f(\cdot)$.

Applying the basic identity $\langle \mb{a}, \mb{b}\rangle = \frac{1}{2}[\norm{\mb{a}}^2 + \norm{\mb{b}}^2 - \norm{\mb{a}-\mb{b}}^2]$ with $\mb{a} = \nabla f(\bar{\mb{x}}^{(t)})$ and $\mb{b} =\frac{1}{N}\sum_{i=1}^N\nabla f_i (\mb{x}_i^{(t)})$ yields
\begin{align}
&\langle \nabla f(\bar{\mb{x}}^{(t)}), \frac{1}{N}\sum_{i=1}^N\nabla f_i(\mb{x}_i^{(t)})\rangle \nonumber\\
=&  \frac{1}{2} \Big(\norm{\nabla f(\bar{\mb{x}}^{(t)})}^2 + \norm{\frac{1}{N}\sum_{i=1}^N\nabla f_i(\mb{x}_i^{(t)})}^2 - \norm{\nabla f(\bar{\mb{x}}^{(t)})-\frac{1}{N}\sum_{i=1}^N\nabla f_i(\mb{x}_i^{(t)})}^2 \Big) \nonumber \\
\overset{(a)}{\geq} & \frac{1}{2} \Big(\norm{\nabla f(\bar{\mb{x}}^{(t)})}^2 + \norm{\frac{1}{N}\sum_{i=1}^N\nabla f_i(\mb{x}_i^{(t)})}^2 -  L^2 \frac{1}{N} \sum_{i=1}^N \Vert \bar{\mb{x}}^{(t)} - \mb{x}_i^{(t)}\Vert^2 \Big) \label{eq:pf-thm-rate-eq4}
\end{align}
where (a) follows because $\Vert\nabla f(\bar{\mb{x}}^{(t)}) - \frac{1}{N}\sum_{i=1}^N \nabla f_i (\mb{x}_i^{(t)})\Vert^2 = \Vert \frac{1}{N} \sum_{i=1}^N \nabla f_i(\bar{\mb{x}}^{(t)}) - \frac{1}{N}\sum_{i=1}^N \nabla f_i (\mb{x}_i^{(t)})\Vert^2 \leq \frac{1}{N} \sum_{i=1}^{N} \Vert \nabla f_i(\bar{\mb{x}}^{(t)})  - \nabla f_i (\mb{x}_i^{(t)})\Vert^2 \leq \frac{1}{N} \sum_{i=1}^N L^2 \Vert \bar{\mb{x}}^{(t)} - \mb{x}_i^{(t)}\Vert^2$, where the first inequality follows from the convexity of $\Vert \cdot \Vert^2$ and Jensen's inequality and the second inequality follows from the smoothness of each $f_i(\cdot)$.

Substituting \eqref{eq:pf-thm-rate-eq3}-\eqref{eq:pf-thm-rate-eq4} into \eqref{eq:pf-thm-rate-eq2} yields
\begin{align}
&\mbb{E}[\langle \nabla f(\bar{\mb{z}}^{(t)}), \bar{\mb{z}}^{(t+1)} - \bar{\mb{z}}^{(t)}\rangle] \nonumber\\
\leq &\frac{(1-\beta)L}{2\beta} \mbb{E}[\norm{\bar{\mb{z}}^{(t)} - \bar{\mb{x}}^{(t)} }^2] + \big( \frac{\beta L \gamma^2}{2(1-\beta)^3} - \frac{\gamma}{2(1-\beta)}\big) \mbb{E}[\norm{\frac{1}{N}\sum_{i=1}^N\nabla f_i (\mb{x}_i^{(t)})}^2]  - \frac{\gamma}{2(1-\beta)}\mbb{E}[\norm{\nabla f(\bar{\mb{x}}^{(t)})}^2] \nonumber \\ &+ \frac{\gamma L^2}{2(1-\beta)} \frac{1}{N}\sum_{i=1}^N\mbb{E}[ \Vert \bar{\mb{x}}^{(t)} - \mb{x}_i^{(t)}\Vert^2]  \label{eq:pf-thm-rate-eq5}
\end{align}

By Lemma \ref{lm:z-diff}, we have 
\begin{align}
\mbb{E}[\norm{\bar{\mb{z}}^{(t+1)} - \bar{\mb{z}}^{(t)}}^2] = \frac{\gamma^2}{(1-\beta)^2} \mbb{E}[\norm{\frac{1}{N}\sum_{i=1}^N \mb{g}_i^{(t)}}^2]  \label{eq:pf-thm-rate-eq6}
\end{align}

Substituting \eqref{eq:pf-thm-rate-eq5}-\eqref{eq:pf-thm-rate-eq6} into \eqref{eq:pf-thm-rate-eq1} yields

\begin{align}
\mbb{E}[f(\bar{\mb{z}}^{(t+1)})] \leq &\mbb{E}[f(\bar{\mb{z}}^{(t)})] + \frac{(1-\beta)L}{2\beta}  \mbb{E}[\norm{\bar{\mb{z}}^{(t)} - \bar{\mb{x}}^{(t)} }^2] + \big( \frac{\beta L \gamma^2}{2(1-\beta)^3} - \frac{\gamma}{2(1-\beta)}\big) \mbb{E}[\norm{\frac{1}{N}\sum_{i=1}^N\nabla f_i (\mb{x}_i^{(t)})}^2] \nonumber\\ &- \frac{\gamma}{2(1-\beta)}\mbb{E}[\norm{\nabla f(\bar{\mb{x}}^{(t)})}^2] + \frac{\gamma L^2}{2(1-\beta)} \frac{1}{N}\sum_{i=1}^N\mbb{E}[ \Vert \bar{\mb{x}}^{(t)} - \mb{x}_i^{(t)}\Vert^2] + \frac{\gamma^2L}{2(1-\beta)^2} \mbb{E}[\norm{\frac{1}{N}\sum_{i=1}^N \mb{g}_i^{(t)}}^2]
\end{align}
Dividing both sides by $\frac{\gamma}{2(1-\beta)}$ and rearranging terms yields
\begin{align}
\mbb{E}[\norm{\nabla f(\bar{\mb{x}}^{(t)})}^2] \leq &\frac{2(1-\beta)}{\gamma} \big(\mbb{E}[f(\bar{\mb{z}}^{(t)})] -\mbb{E}[f(\bar{\mb{z}}^{(t+1)})]  \big) - (1- \frac{L \gamma\beta}{(1-\beta)^2})\mbb{E}[\norm{\frac{1}{N}\sum_{i=1}^N\nabla f_i (\mb{x}_i^{(t)})}^2]  \nonumber\\
&+ \frac{(1-\beta)^2 L}{\beta \gamma} \mbb{E}[\norm{ \bar{\mb{z}}^{(t)} - \bar{\mb{x}}^{(t)} }^2] +  \frac{L\gamma}{(1-\beta)} \mbb{E}[\norm{\frac{1}{N}\sum_{i=1}^N \mb{g}_i^{(t)}}^2] + L^2 \frac{1}{N}\sum_{i=1}^N\mbb{E}[ \Vert \bar{\mb{x}}^{(t)} - \mb{x}_i^{(t)}\Vert^2]  \label{eq:pf-thm-rate-eq7}
\end{align}
Summing over $t\in\{0,1,\ldots, T-1\}$ 
\begin{align}
&\sum_{t=0}^{T-1}\mbb{E}[\norm{\nabla f(\bar{\mb{x}}^{(t)})}^2] \nonumber \\
\leq &\frac{2(1-\beta)}{\gamma } \big(\mbb{E}[f(\bar{\mb{z}}^{(0)})] -\mbb{E}[f(\bar{\mb{z}}^{(T)})]  \big) - \Big(1- \frac{L\gamma \beta}{(1-\beta)^2}\Big) \sum_{t=0}^{T-1}\mbb{E}[\norm{\frac{1}{N}\sum_{i=1}^N\nabla f_i (\mb{x}_i^{(t)})}^2] \nonumber\\
&+ \frac{(1-\beta)^2 L}{\beta \gamma} \sum_{t=0}^{T-1}\mbb{E}[\norm{ \bar{\mb{z}}^{(t)} - \bar{\mb{x}}^{(t)} }^2] + \frac{L\gamma}{(1-\beta)} \sum_{t=0}^{T-1}\mbb{E}[\norm{\frac{1}{N}\sum_{i=1}^N \mb{g}_i^{(t)}}^2] + L^2 \sum_{t=0}^{T-1} \frac{1}{N}\sum_{i=1}^N\mbb{E}[ \Vert \bar{\mb{x}}^{(t)} - \mb{x}_i^{(t)}\Vert^2] \nonumber \\
\overset{(a)}{\leq} &\frac{2(1-\beta)}{\gamma } \big(\mbb{E}[f(\bar{\mb{z}}^{(0)})] -\mbb{E}[f(\bar{\mb{z}}^{(T)})]  \big) - \Big(1- \frac{L\gamma\beta}{(1-\beta)^2}\Big) \sum_{t=0}^{T-1}\mbb{E}[\norm{\frac{1}{N}\sum_{i=1}^N\nabla f_i (\mb{x}_i^{(t)})}^2]  \nonumber \\ & + \frac{L\gamma}{(1-\beta)^2} \sum_{t=0}^{T-1}\mbb{E}[\norm{\frac{1}{N}\sum_{i=1}^N \mb{g}_i^{(t)}}^2] + L^2 \sum_{t=0}^{T-1} \frac{1}{N}\sum_{i=1}^N\mbb{E}[ \Vert \bar{\mb{x}}^{(t)} - \mb{x}_i^{(t)}\Vert^2] \nonumber\\
\overset{(b)}{\leq} &\frac{2(1-\beta)}{\gamma } \big(\mbb{E}[f(\bar{\mb{z}}^{(0)})] -\mbb{E}[f(\bar{\mb{z}}^{(T)})]  \big) -  \Big(1- \frac{ (1+\beta)L\gamma}{(1-\beta)^2}\Big) \sum_{t=0}^{T-1}\mbb{E}[\norm{\frac{1}{N}\sum_{i=1}^N\nabla f_i (\mb{x}_i^{(t)})}^2]  +  \frac{L\gamma }{(1-\beta)^2}  \frac{\sigma^2}{N}T \nonumber \\ &+ \frac{1}{1-\frac{12L^2\gamma^2I^2}{(1-\beta)^2}} \frac{2 L^2\gamma^2 I \sigma^2}{(1-\beta)^2} T+ \frac{1}{1- \frac{12L^2\gamma^2I^2}{(1-\beta)^2}} \frac{6L^2\gamma^2I^2 \kappa^2}{(1-\beta)^2}T  \nonumber
\end{align}
\begin{align}
\overset{(c)}{\leq} &\frac{2(1-\beta)}{\gamma } \big(f(\bar{\mb{x}}^{(0)}) - f^\ast \big) +  \frac{L\gamma }{(1-\beta)^2}  \frac{\sigma^2}{N}T+ \frac{1}{1-\frac{12L^2\gamma^2I^2}{(1-\beta)^2}} \frac{2 L^2\gamma^2 I \sigma^2}{(1-\beta)^2} T+ \frac{1}{1- \frac{12L^2\gamma^2I^2}{(1-\beta)^2}} \frac{6L^2\gamma^2I^2 \kappa^2}{(1-\beta)^2}T
\end{align}
where (a) follow by using Lemma \ref{lm:z-x-diff} and $\frac{\beta L\gamma}{(1-\beta)^2} + \frac{L\gamma}{1-\beta} = \frac{L\gamma}{(1-\beta)^2}$; (b) follows by applying Lemma \ref{lm:diff-avg-per-node} and by noting that $\mbb{E}[\norm{\frac{1}{N}\sum_{i=1}^N \mb{g}_i^{(t)}}^2] \leq  \frac{1}{N}\sigma^{2}  +  \mathbb{E} [ \Vert \frac{1}{N} \sum_{i=1}^{N} \nabla f_{i}(\mathbf{x}_{i}^{(t)})\Vert^{2}]$ by Lemma \ref{lm:expected-gradient} ; and (c) follows because $\gamma $ is chosen to ensure $1- \frac{ (1+\beta)L\gamma}{(1-\beta)^2} \geq 0$, $\bar{\mb{z}}^{(0)} = \bar{\mb{x}}^{(0)}$ by definition in \eqref{eq:def-z}, and $f^\ast$ is the minimum value of problem \eqref{eq:sto-opt}.

Dividing both sides by $T$ yields
\begin{align}
&\frac{1}{T} \sum_{t=0}^{T-1}\mbb{E}[\norm{\nabla f(\bar{\mb{x}}^{(t)})}^2] \nonumber \\
\leq&\frac{2(1-\beta)}{\gamma T} \big(f(\bar{\mb{x}}^{(0)}) - f^\ast \big) +  \frac{L\gamma }{(1-\beta)^2}  \frac{\sigma^2}{N}+ \frac{1}{1-\frac{12L^2\gamma^2I^2}{(1-\beta)^2}} \frac{2 L^2\gamma^2 I \sigma^2}{(1-\beta)^2} + \frac{1}{1- \frac{12L^2\gamma^2I^2}{(1-\beta)^2}} \frac{6L^2\gamma^2I^2 \kappa^2}{(1-\beta)^2} \nonumber\\
\overset{(a)}{\leq}&\frac{2(1-\beta)}{\gamma T} \big(f(\bar{\mb{x}}^{(0)}) - f^\ast \big) +  \frac{L\gamma }{(1-\beta)^2}  \frac{\sigma^2}{N}+  \frac{3 L^2\gamma^2 I \sigma^2}{(1-\beta)^2} +  \frac{9L^2\gamma^2I^2 \kappa^2}{(1-\beta)^2} \nonumber\\
=& O(\frac{1}{\gamma T})  + O(\frac{\gamma}{N} \sigma^2) + O(\gamma^2 I \sigma^2)+ O(\gamma^2 I^2 \kappa^2)
\end{align}
where (a) follows because $I \leq \frac{1-\beta}{6L\gamma}$ is chosen to ensure $\frac{1}{1-\frac{12L^2\gamma^2I^2}{(1-\beta)^2}} \leq \frac{3}{2}$.

\subsection{On the equivalence between \eqref{eq:prsgd-nesterov} and \eqref{eq:nesterov-common-version}} \label{sec:two-equivalent-nesterov}
In this subsection, we show both and yield the same solution sequences $\{\mb{x}_{i}^{(t)}\}_{t\geq 0}$ assume they are initialized at the same $\mb{x}_i^{(0)}$. It is easy to verify that  \eqref{eq:prsgd-nesterov} and \eqref{eq:nesterov-common-version} yield the same $\mb{x}_i^{(1)}$ by noting that $\mb{y}_i^{(0)} = 0$ and $\mb{u}_i^{(0)} = \mb{0}$. Next, we show they yield the same $\{\mb{x}_i^{(0)}\}_{t\geq 2}$.

Substituting the second equation of \eqref{eq:prsgd-nesterov} into the third equation of \eqref{eq:prsgd-nesterov} yields
\begin{align}
\mb{x}_i^{(t)} = \mb{x}_i^{(t-1)} - \gamma \beta \mb{u}_i^{(t)} - \gamma \mb{g}_i^{(t-1)}, \forall t\geq 1 \label{eq:pf-equivalence-nestero-eq1}
\end{align}
Fix $t\geq 2$. Substituting the first equation of \eqref{eq:prsgd-nesterov} into the above equation yields
\begin{align}
\mb{x}_i^{(t)} =& \mb{x}_i^{(t-1)} - \gamma \mb{g}_i^{(t-1)} - \gamma \beta \mb{g}_i^{(t-1)}  - \gamma \beta^2 \mb{u}_i^{(t-1)} \nonumber \\
\overset{(a)}{=}& \mb{x}_i^{(t-1)} - \gamma \mb{g}_i^{(t-1)} - \gamma \beta \mb{g}_i^{(t-1)} + \gamma \beta \mb{g}_i^{(t-2)} + \beta[\mb{x}_i^{(t-1)} - \mb{x}_i^{(t-2)}] \nonumber\\
=&  \mb{x}_i^{(t-1)} - \gamma \mb{g}_i^{(t-1)} + \beta \big[ [\mb{x}_i^{(t-1)}- \gamma  \mb{g}_i^{(t-1)}] - [\mb{x}_i^{(t-2)}- \gamma  \mb{g}_i^{(t-2)}] \big], \forall t\geq 2.
\end{align}
where (a) follows by noting that $-\gamma \beta^2 \mb{u}_i^{(t-1)}  = \gamma \beta \mb{g}_i^{(t-2)} + \beta[\mb{x}_i^{(t-1)} - \mb{x}_i^{(t-2)}], \forall t\geq 2$ from \eqref{eq:pf-equivalence-nestero-eq1}.

Now if we define $\mb{y}_i^{(t)} \defeq  \mb{x}_i^{(t-1)} - \gamma \mb{g}_i^{(t-1)}$, which is the first equation in \eqref{eq:nesterov-common-version}, then the above equation can be written as 
\begin{align}
\mb{x}_i^{(t)} = \mb{y}_i^{(t)} + \beta [\mb{y}_i^{(t)} - \mb{y}_i^{(t-1)}], \quad \forall t\geq 2
\end{align}
which is exactly the second equation in \eqref{eq:nesterov-common-version}.

Thus, we can conclude that  \eqref{eq:prsgd-nesterov} and \eqref{eq:nesterov-common-version} yield the same solution sequences $\{\mb{x}_i^{(0)}\}_{t\geq 0}$ (with different auxiliary/buffer sequences.)

\subsection{Proof of Theorem \ref{thm:nesterov-rate}} \label{sec:pf-thm-nesterov-rate}

To establish the convergence rate for Algorithm \ref{alg:prsgd-momentum} with Nesterov's momentum, we introduce a different auxiliary sequence $\{\bar{\mb{y}}^{(t)}\}_{t\geq 0}$ given by  
\begin{align}
\bar{\mb{y}}^{(t)} \defeq \left \{  \begin{array}{ll} \bar{\mb{x}}^{(t)}, & \quad t=0 \\
\frac{1}{1-\beta} \bar{\mb{x}}^{(t)} - \frac{\beta}{1-\beta}\bar{\mb{x}}^{(t-1)} + \frac{\gamma \beta}{1-\beta} \frac{1}{N} \sum_{i=1}^{N} \mb{g}_i^{(t-1)}, &\quad t\geq 1 \end{array} \right. \label{eq:def-y}
\end{align}
where $\bar{\mb{x}}^{(t)}$ defined in \eqref{eq:def-x-bar} are the node averages of local solutions from Algorithm \ref{alg:prsgd-momentum} with Option II.  

Using the notation $\bar{\mb{u}}^{(t)} \defeq \frac{1}{N} \sum_{i=1}^N \mb{u}_i^{(t)}$ in \eqref{eq:def-u-bar} and $\bar{\mb{v}}^{(t)} \defeq \frac{1}{N} \sum_{i=1}^N \mb{v}_i^{(t)}$ with $\mb{u}_i^{(t)}$ and $\mb{v}_i^{(t)}$ from Algorithm \ref{alg:prsgd-momentum} with Option II, we shall have
\begin{align}
\begin{cases}
\bar{\mb{u}}^{(t)} &= \beta \bar{\mb{u}}^{(t-1)} + \frac{1}{N}\sum_{i=1}^N \mb{g}_i^{(t-1)} \\
\bar{\mb{v}}^{(t)} &= \beta \bar{\mb{u}}^{(t)} + \frac{1}{N}\sum_{i=1}^N \mb{g}_i^{(t-1)} \\
\bar{\mb{x}}^{(t)}  &= \bar{\mb{x}}^{(t-1)}  - \gamma  \bar{\mb{v}}^{(t)}
\end{cases} \label{eq:bar-nesterov}
\end{align}

The main proof shall follow similar steps as in our main proof for Polyak's momentum in Section \ref{sec:main-pf-thm-polyak-rate} as long as we prove the counterpart of Lemmas \ref{lm:z-diff}-\ref{lm:diff-avg-per-node} for Algorithm \ref{alg:prsgd-momentum} with Option II. 

\subsubsection{Useful Lemmas}

While the sequence $\{\bar{\mb{y}}^{(t)}\}_{t\geq 0}$ introduced in \eqref{eq:def-y} is different from $\{\bar{\mb{z}}^{(t)}\}_{t\geq 0}$ defined in \eqref{eq:def-z} for Polyak's momentum, the next lemma shows that $\bar{\mb{y}}^{(t+1)} - \bar{\mb{y}}^{(t)}$ for Algorithm \ref{alg:prsgd-momentum} with Option II is equal to $\bar{\mb{z}}^{(t+1)} - \bar{\mb{z}}^{(t)}$ for Algorithm \ref{alg:prsgd-momentum} with Option I. 

\begin{Lem}\label{lm:y-diff}
Consider the sequence $\{\bar{\mb{y}}^{(t)}\}_{t\geq 0}$ defined in \eqref{eq:def-y}. Algorithm \ref{alg:prsgd-momentum} with Option II ensures that for all $t\geq 0$, we have 
\begin{align}
\bar{\mb{y}}^{(t+1)} - \bar{\mb{y}}^{(t)} = -\frac{\gamma}{1-\beta}\frac{1}{N}\sum_{i=1}^N \mb{g}_i^{(t)}.
\end{align}
\end{Lem}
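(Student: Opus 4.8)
The plan is to mirror the proof of Lemma~\ref{lm:z-diff}, splitting into the base case $t=0$ and the generic case $t\geq 1$, using only the averaged Option~II recursion \eqref{eq:bar-nesterov} together with the definition \eqref{eq:def-y} of $\bar{\mb{y}}^{(t)}$. The key structural remark is that \eqref{eq:def-y} differs from the Polyak sequence \eqref{eq:def-z} exactly by the correction term $\frac{\gamma\beta}{1-\beta}\frac{1}{N}\sum_{i=1}^N\mb{g}_i^{(t-1)}$, and this extra term is precisely what compensates for the fact that in Option~II the solution is driven by $\bar{\mb{v}}^{(t)}=\beta\bar{\mb{u}}^{(t)}+\frac{1}{N}\sum_{i=1}^N\mb{g}_i^{(t-1)}$ rather than by $\bar{\mb{u}}^{(t)}$; so one expects the telescoped increment to land on the same value $-\frac{\gamma}{1-\beta}\frac{1}{N}\sum_{i=1}^N\mb{g}_i^{(t)}$ as in the Polyak case.

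For $t=0$, I would use $\bar{\mb{u}}^{(0)}=\mb{0}$ to get $\bar{\mb{u}}^{(1)}=\frac{1}{N}\sum_{i=1}^N\mb{g}_i^{(0)}$, hence $\bar{\mb{v}}^{(1)}=(1+\beta)\frac{1}{N}\sum_{i=1}^N\mb{g}_i^{(0)}$ and $\bar{\mb{x}}^{(1)}-\bar{\mb{x}}^{(0)}=-\gamma(1+\beta)\frac{1}{N}\sum_{i=1}^N\mb{g}_i^{(0)}$ by \eqref{eq:bar-nesterov}. Plugging into $\bar{\mb{y}}^{(1)}-\bar{\mb{y}}^{(0)}=\frac{1}{1-\beta}[\bar{\mb{x}}^{(1)}-\bar{\mb{x}}^{(0)}]+\frac{\gamma\beta}{1-\beta}\frac{1}{N}\sum_{i=1}^N\mb{g}_i^{(0)}$ (which follows from \eqref{eq:def-y} and $\bar{\mb{y}}^{(0)}=\bar{\mb{x}}^{(0)}$) and combining the two multiples of $\frac{1}{N}\sum_{i=1}^N\mb{g}_i^{(0)}$ gives the claim, since $-(1+\beta)+\beta=-1$.

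For $t\geq 1$, I would write $\bar{\mb{y}}^{(t+1)}-\bar{\mb{y}}^{(t)}=\frac{1}{1-\beta}[\bar{\mb{x}}^{(t+1)}-\bar{\mb{x}}^{(t)}]-\frac{\beta}{1-\beta}[\bar{\mb{x}}^{(t)}-\bar{\mb{x}}^{(t-1)}]+\frac{\gamma\beta}{1-\beta}\frac{1}{N}\sum_{i=1}^N[\mb{g}_i^{(t)}-\mb{g}_i^{(t-1)}]$ from \eqref{eq:def-y}, substitute $\bar{\mb{x}}^{(\tau+1)}-\bar{\mb{x}}^{(\tau)}=-\gamma\bar{\mb{v}}^{(\tau+1)}$, expand $\bar{\mb{v}}^{(t+1)}=\beta\bar{\mb{u}}^{(t+1)}+\frac{1}{N}\sum_{i=1}^N\mb{g}_i^{(t)}$ and $\bar{\mb{v}}^{(t)}=\beta\bar{\mb{u}}^{(t)}+\frac{1}{N}\sum_{i=1}^N\mb{g}_i^{(t-1)}$, and finally eliminate $\bar{\mb{u}}^{(t+1)}$ via $\bar{\mb{u}}^{(t+1)}=\beta\bar{\mb{u}}^{(t)}+\frac{1}{N}\sum_{i=1}^N\mb{g}_i^{(t)}$. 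After collecting terms, the $\bar{\mb{u}}^{(t)}$ contributions cancel, the $\frac{1}{N}\sum_{i=1}^N\mb{g}_i^{(t-1)}$ contributions cancel, and the surviving $\frac{1}{N}\sum_{i=1}^N\mb{g}_i^{(t)}$ terms collapse to $-\frac{\gamma}{1-\beta}\frac{1}{N}\sum_{i=1}^N\mb{g}_i^{(t)}$. There is no conceptual obstacle; the only real hazard is arithmetic slippage in tracking the several $\frac{\gamma}{1-\beta}$-scaled coefficients, so I would keep the base and induction cases symbolically explicit. Once established, this identity plays the role of Lemma~\ref{lm:z-diff} and lets the remainder of the Nesterov analysis (the analogues of Lemmas~\ref{lm:z-x-diff} and~\ref{lm:diff-avg-per-node}, and then the descent argument of Section~\ref{sec:main-pf-thm-polyak-rate}) proceed verbatim.
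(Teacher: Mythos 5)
Your proposal is correct and follows essentially the same route as the paper's own proof: the base case $t=0$ via $\bar{\mb{u}}^{(0)}=\mb{0}$ and $\bar{\mb{v}}^{(1)}=(1+\beta)\frac{1}{N}\sum_{i=1}^N\mb{g}_i^{(0)}$, and for $t\geq 1$ the same substitution of $\bar{\mb{x}}^{(\tau+1)}-\bar{\mb{x}}^{(\tau)}=-\gamma\bar{\mb{v}}^{(\tau+1)}$ followed by expanding $\bar{\mb{v}}^{(t+1)}=\beta^2\bar{\mb{u}}^{(t)}+(1+\beta)\frac{1}{N}\sum_{i=1}^N\mb{g}_i^{(t)}$ and $\bar{\mb{v}}^{(t)}=\beta\bar{\mb{u}}^{(t)}+\frac{1}{N}\sum_{i=1}^N\mb{g}_i^{(t-1)}$ so that the $\bar{\mb{u}}^{(t)}$ and $\mb{g}_i^{(t-1)}$ contributions cancel. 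The cancellation you describe is exactly the one the paper carries out, so no gap remains.
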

\begin{proof}  The case $\bar{\mb{y}}^{(1)} - \bar{\mb{y}}^{(0)} = -\frac{\gamma}{1-\beta}\frac{1}{N}\sum_{i=1}^N \mb{g}_i^{(0)}$ can be shown directly by combining \eqref{eq:def-y} and \eqref{eq:bar-nesterov} with $t=0$. To prove the case $t\geq 1$, we note that 
\begin{align*}
			&\bar{\mb{y}}^{(t+1)} - \bar{\mb{y}}^{(t)} \\
			\overset{(a)}{=}& \frac{1}{1-\beta} [ \bar{\mb{x}}^{(t+1)} -\bar{\mb{x}}^{(t)}] - \frac{\beta}{1-\beta}[\bar{\mb{x}}^{(t)} - \bar{\mb{x}}^{(t-1)}] + \frac{\gamma \beta}{1-\beta} [\frac{1}{N}\sum_{i=1}^N \mb{g}_i^{(t)} - \frac{1}{N}\sum_{i=1}^N \mb{g}_i^{(t-1)}] \\
			\overset{(b)}{=}&- \frac{\gamma}{1-\beta} \bar{\mb{v}}^{(t+1)} + \frac{\gamma \beta}{1-\beta} \bar{\mb{v}}^{(t)} + \frac{\gamma \beta}{1-\beta} [\frac{1}{N}\sum_{i=1}^N \mb{g}_i^{(t)} - \frac{1}{N}\sum_{i=1}^N \mb{g}_i^{(t-1)}]\\
			\overset{(c)}{=}& - \frac{\gamma \beta^2}{1-\beta} \bar{\mb{u}}^{(t)} - \frac{\gamma(1+\beta)}{(1-\beta)} \frac{1}{N}\sum_{i=1}^N \mb{g}_i^{(t)} + \frac{\gamma \beta^2}{1-\beta} \bar{\mb{u}}^{(t)} +  \frac{\gamma \beta}{1-\beta}  \frac{1}{N}\sum_{i=1}^N \mb{g}_i^{(t-1)} +  \frac{\gamma \beta}{1-\beta} [\frac{1}{N}\sum_{i=1}^N \mb{g}_i^{(t)} - \frac{1}{N}\sum_{i=1}^N \mb{g}_i^{(t-1)}]\\
			=& -\frac{\gamma}{1-\beta}\frac{1}{N}\sum_{i=1}^N \mb{g}_i^{(t)}
			\end{align*}
			where (a) follows from \eqref{eq:def-y}; (b) follows from the third equation in \eqref{eq:bar-nesterov}; and (c) follows by substituting  $\bar{\mb{v}}^{(t)}  = \beta \bar{\mb{u}}^{(t)} + \frac{1}{N}\sum_{i=1}^N \mb{g}_i^{(t-1)}$ and $\bar{\mb{v}}^{(t+1)}  = \beta^2 \bar{\mb{u}}^{(t)} + (1+\beta) \frac{1}{N}\sum_{i=1}^N \mb{g}_i^{(t)}$, which is implied by combining the first and second equations in \eqref{eq:bar-nesterov}.
\end{proof}

\begin{Lem} \label{lm:y-x-diff}
	Let $\{\bar{\mb{x}}^{(t)}\}_{t\geq 0}$ defined in \eqref{eq:def-x-bar} be the node averages of local solutions from Algorithm \ref{alg:prsgd-momentum} with Option II.  Let $\{\bar{\mb{y}}^{(t)}\}_{t\geq 0}$ be defined in \eqref{eq:def-y}. For all $T\geq 1$, Algorithm \ref{alg:prsgd-momentum} with Option II ensures that 
	\begin{align}
		\sum_{t=0}^{T-1} \norm{ \bar{\mb{y}}^{(t)} - \bar{\mb{x}}^{(t)} }^2 \leq \frac{\gamma^2 \beta^4}{(1-\beta)^4}  \sum_{t=0}^{T-1} \Big\Vert \Big[\frac{1}{N} \sum_{i=1}^N \mb{g}_i^{(t)} \Big] \Big\Vert^2
	\end{align}
\end{Lem}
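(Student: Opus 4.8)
The plan is to follow, almost verbatim, the proof of Lemma~\ref{lm:z-x-diff} for Polyak's momentum, the only structural change being which multiple of the buffer average $\bar{\mb{u}}^{(t)}$ the deviation equals. First I would derive a closed form for $\bar{\mb{y}}^{(t)} - \bar{\mb{x}}^{(t)}$. Starting from \eqref{eq:def-y}, for $t\geq 1$ one has $\bar{\mb{y}}^{(t)} - \bar{\mb{x}}^{(t)} = \frac{\beta}{1-\beta}[\bar{\mb{x}}^{(t)} - \bar{\mb{x}}^{(t-1)}] + \frac{\gamma\beta}{1-\beta}\frac{1}{N}\sum_{i=1}^N\mb{g}_i^{(t-1)}$ because $\frac{1}{1-\beta} - 1 = \frac{\beta}{1-\beta}$. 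Substituting $\bar{\mb{x}}^{(t)} - \bar{\mb{x}}^{(t-1)} = -\gamma\bar{\mb{v}}^{(t)}$ from \eqref{eq:bar-nesterov}, and then $\bar{\mb{v}}^{(t)} = \beta\bar{\mb{u}}^{(t)} + \frac{1}{N}\sum_{i=1}^N\mb{g}_i^{(t-1)}$, the $\frac{1}{N}\sum_i\mb{g}_i^{(t-1)}$ terms cancel and I am left with $\bar{\mb{y}}^{(t)} - \bar{\mb{x}}^{(t)} = -\frac{\gamma\beta^2}{1-\beta}\bar{\mb{u}}^{(t)}$. This is the exact analogue of $\bar{\mb{z}}^{(t)} - \bar{\mb{x}}^{(t)} = -\frac{\gamma\beta}{1-\beta}\bar{\mb{u}}^{(t)}$ in the Polyak case, with one extra power of $\beta$; for $t=0$ the deviation vanishes since $\bar{\mb{y}}^{(0)} = \bar{\mb{x}}^{(0)}$.

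Second, I would observe that in Option~II the buffer average obeys the same recursion $\bar{\mb{u}}^{(t)} = \beta\bar{\mb{u}}^{(t-1)} + \frac{1}{N}\sum_{i=1}^N\mb{g}_i^{(t-1)}$ with $\bar{\mb{u}}^{(0)} = \mb{0}$ as in Option~I --- this is precisely the first line of \eqref{eq:bar-nesterov} --- so unrolling exactly as in the derivation of \eqref{eq:pf-lm-z-x-diff-eq1} gives $\bar{\mb{u}}^{(t)} = \sum_{\tau=0}^{t-1}\beta^{t-1-\tau}[\frac{1}{N}\sum_{i=1}^N\mb{g}_i^{(\tau)}]$ for $t\geq 1$. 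Hence $\bar{\mb{y}}^{(t)} - \bar{\mb{x}}^{(t)} = -\frac{\gamma\beta^2}{1-\beta}\sum_{\tau=0}^{t-1}\beta^{t-1-\tau}[\frac{1}{N}\sum_{i=1}^N\mb{g}_i^{(\tau)}]$.

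Third, the remainder is the tail of the proof of Lemma~\ref{lm:z-x-diff} applied with $\beta$ replaced by $\beta^2$ only in the leading coefficient: I would apply Jensen's inequality to the convex combination with weights $\beta^{t-1-\tau}/s_t$ where $s_t = \sum_{\tau=0}^{t-1}\beta^{t-1-\tau} = \frac{1-\beta^t}{1-\beta}\leq\frac{1}{1-\beta}$, yielding $\norm{\bar{\mb{y}}^{(t)} - \bar{\mb{x}}^{(t)}}^2 \leq \frac{\gamma^2\beta^4}{(1-\beta)^3}\sum_{\tau=0}^{t-1}\beta^{t-1-\tau}\norm{\frac{1}{N}\sum_{i=1}^N\mb{g}_i^{(\tau)}}^2$, then sum over $t\in\{1,\ldots,T-1\}$, swap the order of summation, and use $\sum_{t=\tau+1}^{T-1}\beta^{t-1-\tau}\leq\frac{1}{1-\beta}$ to collect the factor $\frac{\gamma^2\beta^4}{(1-\beta)^4}$. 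Since $\bar{\mb{y}}^{(0)} - \bar{\mb{x}}^{(0)} = \mb{0}$, extending the sum on the right to $\tau\in\{0,\ldots,T-1\}$ only enlarges it, giving the stated bound.

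I do not expect a genuine obstacle here: every step either reuses an identity already in hand or repeats the Polyak computation with a single symbol changed. The one point that needs care is the cancellation in the first step --- the $\frac{\gamma\beta}{1-\beta}\frac{1}{N}\sum_i\mb{g}_i^{(t-1)}$ correction that defines $\bar{\mb{y}}^{(t)}$ in \eqref{eq:def-y} is tuned exactly so that it annihilates the gradient contribution of $\bar{\mb{v}}^{(t)}$, leaving a pure multiple of $\bar{\mb{u}}^{(t)}$; getting that algebra right is what produces $\beta^4$ rather than $\beta^2$, and is the only place where Nesterov's analysis differs from Polyak's.
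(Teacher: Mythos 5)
Your proposal is correct and follows essentially the same route as the paper's proof: derive $\bar{\mb{y}}^{(t)} - \bar{\mb{x}}^{(t)} = -\frac{\gamma\beta^2}{1-\beta}\bar{\mb{u}}^{(t)}$ via the cancellation of the gradient term, unroll the buffer recursion, and then repeat the Jensen-plus-geometric-series argument of Lemma \ref{lm:z-x-diff} with the prefactor $\beta$ replaced by $\beta^2$. No gaps.
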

\begin{proof}
	Recall that $\bar{\mb{u}}^{(0)} = \mb{0}$.  Recursively applying the first equation in \eqref{eq:bar-nesterov} for $t$ times yields:
	\begin{align}
		\bar{\mb{u}}^{(t)} =  \sum_{\tau=0}^{t-1} \beta^{t-1-\tau} \Big[\frac{1}{N} \sum_{i=1}^N \mb{g}_i^{(\tau)} \Big], \quad \forall t\geq 1 \label{eq:pf-lm-y-x-diff-eq1}
	\end{align}
	Note that for all $t\geq 1$, we have 
	\begin{align}
	\bar{\mb{y}}^{(t)} - \bar{\mb{x}}^{(t)} \overset{(a)}{=}&  \frac{\beta}{1-\beta} [\bar{\mb{x}}^{(t)} - \bar{\mb{x}}^{t-1}] + \frac{\gamma \beta}{1-\beta} \frac{1}{N} \sum_{i=1}^{N} \mb{g}_i^{(t-1)} \nonumber\\
	\overset{(b)}{=}&-\frac{\gamma \beta  }{1-\beta} \bar{\mb{v}}^{(t)} + \frac{\gamma \beta}{1-\beta} \frac{1}{N} \sum_{i=1}^{N} \mb{g}_i^{(t-1)} \nonumber \\
	\overset{(c)}{=}& -\frac{\gamma\beta^2  }{1-\beta} \bar{\mb{u}}^{(t)}  \label{eq:pf-lm-y-x-diff-eq2}
	\end{align}
	where (a) follows from \eqref{eq:def-y}; (b) follows from the third equation in \eqref{eq:bar-nesterov}; and (c) follows from the second equation in \eqref{eq:bar-nesterov}.
	
	Combining \eqref{eq:pf-lm-y-x-diff-eq1} and \eqref{eq:pf-lm-y-x-diff-eq2} yields
	\begin{align}
		\bar{\mb{y}}^{(t)} - \bar{\mb{x}}^{(t)} =  -\frac{\gamma \beta^2 }{1-\beta} \sum_{\tau=0}^{t-1} \beta^{t-1-\tau} \Big[\frac{1}{N} \sum_{i=1}^N \mb{g}_i^{(\tau)} \Big], \quad \forall t\geq 1
	\end{align} 
	Define $s_t \defeq \sum_{\tau=0}^{t-1} \beta^{t-1-\tau} = \frac{1-\beta^{t}}{1-\beta}$. For all $t\geq 1$, we have 
	\begin{align}
		\norm{\bar{\mb{y}}^{(t)} - \bar{\mb{x}}^{(t)}}^2 &= \frac{\gamma^2 \beta^4}{(1-\beta)^2}s_t^2 \Big\Vert \sum_{\tau=0}^{t-1} \frac{\beta^{t-1-\tau}}{s_t}  \Big[\frac{1}{N} \sum_{i=1}^N \mb{g}_i^{(\tau)} \Big] \Big\Vert^2 \nonumber\\
		&\overset{(a)}{\leq} \frac{\gamma^2 \beta^4}{(1-\beta)^2} s_t^2 \sum_{\tau=0}^{t-1} \frac{\beta^{t-1-\tau}}{s_t} \Big\Vert \Big[\frac{1}{N} \sum_{i=1}^N \mb{g}_i^{(\tau)} \Big] \Big\Vert^2 \nonumber\\
		&= \frac{\gamma^2 \beta^4(1-\beta^t)}{(1-\beta)^3} \sum_{\tau=0}^{t-1} \beta^{t-1-\tau}\Big\Vert \Big[\frac{1}{N} \sum_{i=1}^N \mb{g}_i^{(\tau)} \Big] \Big\Vert^2 \nonumber\\
		&\leq \frac{\gamma^2 \beta^4}{(1-\beta)^3} \sum_{\tau=0}^{t-1} \beta^{t-1-\tau}\Big\Vert \Big[\frac{1}{N} \sum_{i=1}^N \mb{g}_i^{(\tau)} \Big] \Big\Vert^2 \label{eq:pf-lm-y-x-diff-eq3}
	\end{align}
	where (a) follows from the convexity of $\Vert \cdot \Vert^2$ and Jensen's inequality.
	
	Fix $T\geq 1$. Recall that $\bar{\mb{y}}^{(0)} - \bar{\mb{x}}^{(0)}  = \mb{0}$. Summing \eqref{eq:pf-lm-y-x-diff-eq3} over $t\in\{1,2,\ldots, T-1\}$ yields
	\begin{align}
	\sum_{t=0}^{T-1}\norm{\bar{\mb{y}}^{(t)} - \bar{\mb{x}}^{(t)}}^2 &\leq \frac{\gamma^2 \beta^4}{(1-\beta)^3}\sum_{t=1}^{T-1} \sum_{\tau=0}^{t-1} \beta^{t-1-\tau}\Big\Vert \Big[\frac{1}{N} \sum_{i=1}^N \mb{g}_i^{(\tau)} \Big] \Big\Vert^2  \nonumber \\
	&=\frac{\gamma^2 \beta^4}{(1-\beta)^3} \sum_{\tau=0}^{T-2} \Big(\Big\Vert \Big[\frac{1}{N} \sum_{i=1}^N \mb{g}_i^{(\tau)} \Big] \Big\Vert^2 \sum_{l=\tau+1}^{T-1} \beta^{l-1-\tau} \Big) \nonumber\\
	&\overset{(a)}{\leq} \frac{\gamma^2 \beta^4}{(1-\beta)^4}  \sum_{\tau=0}^{T-2} \Big\Vert \Big[\frac{1}{N} \sum_{i=1}^N \mb{g}_i^{(\tau)} \Big] \Big\Vert^2 \nonumber\\
	&\leq \frac{\gamma^2 \beta^4}{(1-\beta)^4}  \sum_{\tau=0}^{T-1} \Big\Vert \Big[\frac{1}{N} \sum_{i=1}^N \mb{g}_i^{(\tau)} \Big] \Big\Vert^2
	\end{align}
	where (a) follows by noting that $\sum_{l=\tau+1}^{T-1} \beta^{l-1-\tau} = \frac{1-\beta^{T-\tau-1}}{1-\beta} \leq \frac{1}{1-\beta}$.
\end{proof}

\begin{Lem} \label{lm:nesterov-diff-avg-per-node}
	Consider problem \eqref{eq:sto-opt} under Assumption \ref{ass:basic}.  Let $\{\bar{\mb{x}}^{(t)}\}_{t\geq 0}$ defined in \eqref{eq:def-x-bar} be the node averages of local solutions from Algorithm \ref{alg:prsgd-momentum} with Option II.  If $\gamma$ and $I$ are chosen to satisfy $\frac{12L^2\gamma^2I^2}{(1-\beta)^2} < 1$, then for all $T\geq 1$, we have 
	\begin{align*}
	\sum_{t=0}^{T-1}\frac{1}{N}\sum_{i=1}^{N}\mbb{E}[\norm{\bar{\mb{x}}^{(t)}  - \mb{x}_i^{(t)}}^2] \leq  \frac{1}{1-\frac{12L^2\gamma^2I^2}{(1-\beta)^2}} \frac{2 \gamma^2 I \sigma^2}{(1-\beta)^2} T+ \frac{1}{1- \frac{12L^2\gamma^2I^2}{(1-\beta)^2}} \frac{6\gamma^2I^2 \kappa^2}{(1-\beta)^2} T
	\end{align*}
\end{Lem}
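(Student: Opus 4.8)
The plan is to mirror, almost verbatim, the proof of Lemma \ref{lm:diff-avg-per-node} (the Polyak case), exploiting the observation already noted in the text: under Option II the buffer $\mb{u}_i^{(t)}$ obeys exactly the same recursion $\mb{u}_i^{(t)} = \beta\mb{u}_i^{(t-1)} + \mb{g}_i^{(t-1)}$ as under Option I, and only the solution update changes, from $\mb{x}_i^{(t)} = \mb{x}_i^{(t-1)} - \gamma\mb{u}_i^{(t)}$ to $\mb{x}_i^{(t)} = \mb{x}_i^{(t-1)} - \gamma\mb{v}_i^{(t)}$ with $\mb{v}_i^{(t)} = \beta\mb{u}_i^{(t)} + \mb{g}_i^{(t-1)}$. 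As before, $\bar{\mb{x}}^{(t)} - \mb{x}_i^{(t)} = \mb{0}$ whenever $t$ is a multiple of $I$ (step \eqref{eq:prsgd-restart}), so I fix $t$ not a multiple of $I$, let $t_0 < t$ be the largest multiple of $I$, and use $\mb{u}_i^{(t_0)} = \hat{\mb{u}}$, $\mb{x}_i^{(t_0)} = \hat{\mb{x}}$ for every $i$.

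First I would unroll the recursions away from $t_0$: for $\tau\in\{t_0+1,\dots,t\}$ one gets $\mb{u}_i^{(\tau)} = \beta^{\tau-t_0}\hat{\mb{u}} + \sum_{k=t_0}^{\tau-1}\beta^{\tau-1-k}\mb{g}_i^{(k)}$, hence $\mb{v}_i^{(\tau)} = \beta^{\tau-t_0+1}\hat{\mb{u}} + \sum_{k=t_0}^{\tau-1}\beta^{\tau-k}\mb{g}_i^{(k)} + \mb{g}_i^{(\tau-1)}$. Telescoping $\mb{x}_i^{(t)} = \hat{\mb{x}} - \gamma\sum_{\tau=t_0+1}^{t}\mb{v}_i^{(\tau)}$ and swapping the order of summation in the double sum (the key bookkeeping identity is $\sum_{\tau=t_0+1}^{t}\sum_{k=t_0}^{\tau-1}\beta^{\tau-k}\mb{g}_i^{(k)} = \sum_{k=t_0}^{t-1}\frac{\beta(1-\beta^{t-k})}{1-\beta}\mb{g}_i^{(k)}$, which combines with $\sum_{\tau=t_0+1}^{t}\mb{g}_i^{(\tau-1)}$ to give coefficient $\frac{1-\beta^{t-k+1}}{1-\beta}$) yields $\mb{x}_i^{(t)} = \hat{\mb{x}} - \gamma\big(\sum_{\tau=t_0+1}^{t}\beta^{\tau-t_0+1}\big)\hat{\mb{u}} - \gamma\sum_{\tau=t_0}^{t-1}\frac{1-\beta^{t-\tau+1}}{1-\beta}\mb{g}_i^{(\tau)}$, and the same identity with $\mb{g}_i^{(\tau)}$ replaced by $\frac1N\sum_{j}\mb{g}_j^{(\tau)}$ holds for $\bar{\mb{x}}^{(t)}$. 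Subtracting, the $\hat{\mb{x}}$ and $\hat{\mb{u}}$ terms cancel and
\begin{align}
\bar{\mb{x}}^{(t)} - \mb{x}_i^{(t)} = -\gamma\sum_{\tau=t_0}^{t-1}\frac{1-\beta^{t-\tau+1}}{1-\beta}\Big[\frac1N\sum_{j=1}^N\mb{g}_j^{(\tau)} - \mb{g}_i^{(\tau)}\Big], \nonumber
\end{align}
which is exactly the analogue of the corresponding identity in the proof of Lemma \ref{lm:diff-avg-per-node}, with the weight $\frac{1-\beta^{t-\tau}}{1-\beta}$ there replaced by $\frac{1-\beta^{t-\tau+1}}{1-\beta}$ here.

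From here the argument is verbatim that of Lemma \ref{lm:diff-avg-per-node}: decompose $\mb{g}_i^{(\tau)} = [\mb{g}_i^{(\tau)} - \nabla f_i(\mb{x}_i^{(\tau)})] + \nabla f_i(\mb{x}_i^{(\tau)})$, apply $\norm{\mb{a}_1+\mb{a}_2}^2\le 2\norm{\mb{a}_1}^2+2\norm{\mb{a}_2}^2$, bound the martingale-difference piece using independence of the noise across iterations together with $\mbb{E}[\norm{\mb{g}_i^{(\tau)}-\nabla f_i(\mb{x}_i^{(\tau)})}^2]\le\sigma^2$, and bound the other piece using $\norm{\sum_\tau\mb{a}_\tau}^2\le(t-t_0)\sum_\tau\norm{\mb{a}_\tau}^2$, Lemma \ref{lm:from-smooth-f}, and $\frac1N\sum_i\mbb{E}\norm{\nabla f_i(\bar{\mb{x}}^{(\tau)})-\nabla f(\bar{\mb{x}}^{(\tau)})}^2\le\kappa^2$. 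The only place the weight enters is through the uniform bound $0\le\frac{1-\beta^{t-\tau+1}}{1-\beta}\le\frac1{1-\beta}$, valid for every $\beta\in[0,1)$ (since $1-\beta^{t-\tau+1}\le 1$), which plays exactly the role $\abs{\frac{1-\beta^{t-\tau}}{1-\beta}}\le\frac1{1-\beta}$ played before; together with $0<t-t_0<I$ this reproduces the per-$t$ bound $\frac1N\sum_i\mbb{E}\norm{\bar{\mb{x}}^{(t)}-\mb{x}_i^{(t)}}^2\le\frac{2\gamma^2I\sigma^2}{(1-\beta)^2}+\frac{12L^2\gamma^2I}{(1-\beta)^2}\sum_{\tau=t_0}^{t-1}\frac1N\sum_i\mbb{E}\norm{\mb{x}_i^{(\tau)}-\bar{\mb{x}}^{(\tau)}}^2+\frac{6\gamma^2I^2\kappa^2}{(1-\beta)^2}$, i.e.\ the exact analogue of \eqref{eq:diff-avg-per-node-eq6}. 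Summing over $t\in\{0,\dots,T-1\}$, using that the inner $\tau$-sum stays within one length-$I$ block so the double sum is at most $\frac{12L^2\gamma^2I^2}{(1-\beta)^2}\sum_{t=0}^{T-1}\frac1N\sum_i\mbb{E}\norm{\bar{\mb{x}}^{(t)}-\mb{x}_i^{(t)}}^2$, and the hypothesis $\frac{12L^2\gamma^2I^2}{(1-\beta)^2}<1$ to absorb that term to the left, gives the claimed estimate. The only genuinely new work is the closed-form expansion of $\mb{x}_i^{(t)}$ away from a restart under the Nesterov update, i.e.\ producing the coefficient $\frac{1-\beta^{t-\tau+1}}{1-\beta}$; I expect that swap-of-summation computation to be the main (though still routine) obstacle, and I would double-check it carefully, since everything downstream is insensitive to the precise coefficient — only its sign and the bound $\frac1{1-\beta}$ matter.
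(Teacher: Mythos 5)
Your proposal is correct and follows essentially the same route as the paper's own proof: unroll $\mb{u}_i^{(\tau)}$ and $\mb{v}_i^{(\tau)}$ from the last restart, telescope and swap summations to obtain the coefficient $\frac{1-\beta^{t+1-\tau}}{1-\beta}$ (your $\frac{1-\beta^{t-\tau+1}}{1-\beta}$ and $\sum_{\tau=t_0+1}^{t}\beta^{\tau-t_0+1}$ match the paper's expressions exactly), then bound it uniformly by $\frac{1}{1-\beta}$ and repeat the Option I argument verbatim. Your bookkeeping identity for the double sum checks out, and the downstream steps coincide with the paper's.
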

\begin{proof}
	Since $\bar{\mb{x}}^{(t)}  - \mb{x}_i^{(t)}=\mb{0}$ when $t$ is a multiple of $I$, our focus is to develop upper bounds for $\mbb{E}[\norm{\bar{\mb{x}}^{(t)}  - \mb{x}_i^{(t)}}^2]$ when $t$ is not a multiple of $I$. Consider $t \geq 1$ that is not a multiple of $I$. Note that Algorithm \ref{alg:prsgd-momentum} restarts ``SGD with momentum" every $I$ iterations (by resetting $\mb{u}_i^{(t)}  =  \hat{\mb{u}}\defeq \frac{1}{N} \sum_{j=1}^{N} \mb{u}_j^{(t)}$ and $\mb{x}_i^{(t)} = \hat{\mathbf{x}} \defeq \frac{1}{N} \sum_{j=1}^{N} \mathbf{x}_{j}^{(t)}$).  Let $t_{0} < t$ be the largest iteration index that is a multiple of $I$, i.e., $t_0~\text{mod}~I = 0$. Note that we must have $t - t_0 < I$ and $\mb{u}_i^{(t_0)} = \hat{\mb{u}}, \mb{x}_i^{(t_0)} = \hat{\mb{x}}$.  For any $\tau\in\{t_0+1, \ldots, t\}$, by recursively applying the first equation in \eqref{eq:prsgd-nesterov}, we have 
	\begin{align}
	\mb{u}_i^{(\tau)} = \beta^{\tau-t_0} \hat{\mb{u}} + \sum_{k=t_0}^{\tau-1} \beta^{\tau-1-k} \mb{g}_i^{(k)} 
	\end{align}
	By the second equation in \eqref{eq:prsgd-nesterov}, this further implies 
	\begin{align}
	\mb{v}_i^{(\tau)} = \beta^{\tau+1-t_0} \hat{\mb{u}} + \sum_{k=t_0}^{\tau-1} \beta^{\tau-k} \mb{g}_i^{(k)}  +  \mb{g}_i^{(\tau-1)}, \quad \forall  \tau\in\{t_0+1, \ldots, t\}.
	\label{eq:pf-nesterov-diff-avg-per-node-eq0}
	\end{align}
	For any  $\tau\in\{t_0+1, \ldots, t\}$, by the third equation in \eqref{eq:prsgd-nesterov}, we have
	\begin{align}
	\mb{x}_i^{(\tau)} -\mb{x}_i^{(\tau-1)} = -\gamma \mb{v}_i^{(\tau)}
	\end{align}	 
	Summing over $\tau\in\{t_0+1, \ldots, t\}$ and noting that $\mb{x}_i^{(t_0)} = \hat{\mb{x}}$ yields
	\begin{align}
	\mb{x}_i^{(t)} &= \hat{\mb{x}} -\gamma \sum_{\tau=t_0+1}^{t} \mb{v}_i^{(\tau)} \nonumber\\
	&\overset{(a)}{=} \hat{\mb{x}} -\gamma \big(\sum_{\tau=t_0+1}^{t} \beta^{\tau+1-t_0} \big)\hat{\mb{u}} - \gamma \sum_{\tau = t_0+1}^{t} \sum_{k=t_0}^{\tau-1} \beta^{\tau-k} \mb{g}_i^{(k)} - \gamma \sum_{\tau=t_0+1}^{t} \mb{g}_i^{(\tau-1)} \nonumber \\
	&= \hat{\mb{x}} -\gamma\big(\sum_{j=2}^{t+1-t_0} \beta^{j} \big) \hat{\mb{u}} - \gamma \sum_{k = t_0}^{t-1} \sum_{j=1}^{t-k} \beta^{j} \mb{g}_i^{(k)} -  \gamma \sum_{\tau=t_0}^{t-1} \mb{g}_i^{(\tau)}\nonumber\\
	&= \hat{\mb{x}} -\gamma\big(\sum_{j=2}^{t+1-t_0} \beta^{j} \big) \hat{\mb{u}} - \gamma \sum_{\tau = t_0}^{t-1} \frac{1-\beta^{t+1-\tau}}{1-\beta} \mb{g}_i^{(\tau)} \label{eq:pf-nesterov-diff-avg-per-node-eq1}
	\end{align}
	where (a) follows by substituting \eqref{eq:pf-nesterov-diff-avg-per-node-eq0}.

	Using a argument similar to the above, we can further show 
	\begin{align}
	\bar{\mb{x}}^{(t)} = \hat{\mb{x}} -\gamma \big(\sum_{j=2}^{t+1-t_0} \beta^{j} \big)\hat{\mb{u}} - \gamma \sum_{\tau=t_0}^{t-1}\frac{1-\beta^{t+1-\tau}}{1-\beta} \frac{1}{N}\sum_{i=1}^N\mb{g}_i^{(\tau)} \label{eq:pf-nesterov-diff-avg-per-node-eq2} 
	\end{align}
	Combining \eqref{eq:pf-nesterov-diff-avg-per-node-eq1} and \eqref{eq:pf-nesterov-diff-avg-per-node-eq2} yields
	\begin{align}
	&\frac{1}{N}\sum_{i=1}^{N}\mbb{E}[\norm{\bar{\mb{x}}^{(t)}  - \mb{x}_i^{(t)}}^2] \nonumber\\
	=& \gamma^2 \frac{1}{N} \sum_{i=1}^N \mbb{E}\Big[\Big\Vert \sum_{\tau = t_0}^{t-1} \big[ \mb{g}_i^{(\tau)} - \frac{1}{N} \sum_{j=1}^N \mb{g}_j^{(\tau)}\big]\frac{1-\beta^{t+1-\tau}}{1-\beta} \Big\Vert^2\Big] \nonumber\\
	\overset{(a)}{\leq}& 2\gamma^2\frac{1}{N}\sum_{i=1}^N\mbb{E}\Big[\Big\Vert \sum_{\tau = t_0}^{t-1} \big[ [\mb{g}_i^{(\tau)} - \nabla f_i(\mb{x}_i^{(\tau)}) ] - \frac{1}{N} \sum_{j=1}^N [\mb{g}_j^{(\tau)} - \nabla f_j(\mb{x}_j^{(\tau)})] \big]\frac{1-\beta^{t+1-\tau}}{1-\beta} \Big\Vert^2\Big]  \nonumber \\&+ 2\gamma^2 \frac{1}{N}\sum_{i=1}^N \mbb{E}\Big[\Big\Vert   \sum_{\tau=t_0}^{t-1} \big[\nabla f_i(\mb{x}_i^{(\tau)})- \frac{1}{N}\sum_{j=1}^{N} \nabla f_j(\mb{x}_j^{(\tau)}) \big]\frac{1-\beta^{t+1-\tau}}{1-\beta} \Big\Vert^2\Big] \label{eq:nesterov-diff-avg-per-node-eq3}
	\end{align}
	where (a) follows from the basic inequality $\norm{\mb{a}_1 + \mb{a}_2}^2 \leq 2 \norm{\mb{a}_1}^2 + 2\norm{\mb{a}_2}^2$.

	Now we develop the respective upper bounds of the two terms on the right side of \eqref{eq:nesterov-diff-avg-per-node-eq3}. We note that 
	\begin{align}
	&\frac{1}{N}\sum_{i=1}^N\mbb{E}\Big[\Big\Vert \sum_{\tau = t_0}^{t-1} \big[ [\mb{g}_i^{(\tau)} - \nabla f_i(\mb{x}_i^{(\tau)}) ] - \frac{1}{N} \sum_{j=1}^N [\mb{g}_j^{(\tau)} - \nabla f_j(\mb{x}_j^{(\tau)})] \big]\frac{1-\beta^{t+1-\tau}}{1-\beta} \Big\Vert^2\Big]  \nonumber\\
	\overset{(a)}{\leq}& \frac{1}{N}\sum_{i=1}^N\mbb{E}\Big[\Big\Vert \sum_{\tau = t_0}^{t-1} \big[ \mb{g}_i^{(\tau)} - \nabla f_i(\mb{x}_i^{(\tau)})  \big]\frac{1-\beta^{t+1-\tau}}{1-\beta} \Big\Vert^2\Big]  \nonumber\\
	\overset{(b)}{=} & \frac{1}{N}\sum_{i=1}^N \sum_{\tau = t_0}^{t-1} \mbb{E}\Big[\Big\Vert  \big[ \mb{g}_i^{(\tau)} - \nabla f_i(\mb{x}_i^{(\tau)})  \big]\frac{1-\beta^{t+1-\tau}}{1-\beta} \Big\Vert^2\Big]  \nonumber \\
	\overset{(c)}{\leq} &  \frac{I\sigma^2}{(1-\beta)^2}  \label{eq:nesterov-diff-avg-per-node-eq4}
	\end{align}
	where (a) follows from the inequality $\frac{1}{N}\sum_{i=1}^N \norm{\mb{a}_i - [\frac{1}{N}\sum_{j=1}^N \mb{a}_j] }^2 = \frac{1}{N}\sum_{i=1}^{N} \norm{\mb{a}_i}^2 - \norm{\frac{1}{N}\sum_{i=1}^{N} \mb{a}_i}^2 \leq  \frac{1}{N}\sum_{i=1}^{N} \norm{\mb{a}_i}^2$ with $\mb{a}_i = \sum_{\tau = t_0}^{t-1}[ \mb{g}_i^{(\tau)} - \nabla f_i(\mb{x}_i^{(\tau)})]\frac{1-\beta^{t+1-\tau}}{1-\beta}$; (b) follows because $\mbb{E}[ \mb{g}_i^{(\tau_2)} - \nabla f_i(\mb{x}_i^{(\tau_2)}) \big\vert  \mb{g}_i^{(\tau_1)} - \nabla f_i(\mb{x}_i^{(\tau_1)}) ] = \mb{0}$ for any $\tau_2 > \tau_1$; (c) follows because $\abs{\frac{1-\beta^{t+1-\tau}}{1-\beta} } \leq \frac{1}{1-\beta}$, $t-t_0 < I$ and $\mbb{E}[\Vert \mb{g}_i^{(\tau)} - \nabla f_i(\mb{x}_i^{(\tau)})\Vert^2] \leq \sigma^2$ (by Assumption \ref{ass:basic}).
	
	We further note that the
	
	\begin{align}
	& \frac{1}{N}\sum_{i=1}^N \mbb{E}\Big[\Big\Vert   \sum_{\tau=t_0}^{t-1} \big[\nabla f_i(\mb{x}_i^{(\tau)})- \frac{1}{N}\sum_{j=1}^{N} \nabla f_j(\mb{x}_j^{(\tau)}) \big]\frac{1-\beta^{t+1-\tau}}{1-\beta} \Big\Vert^2\Big]\nonumber\\
	\overset{(a)}{\leq} & \frac{1}{N}\sum_{i=1}^N (t-t_0)\frac{1}{(1-\beta)^2}\sum_{\tau=t_0}^{t-1} \mbb{E}\Big[\Big\Vert  \nabla f_i(\mb{x}_i^{(\tau)})- \frac{1}{N}\sum_{j=1}^{N} \nabla f_j(\mb{x}_j^{(\tau)})  \Big\Vert^2\Big]\nonumber\\
	\overset{(b)}{\leq}&\frac{I}{(1-\beta)^2}\sum_{\tau=t_0}^{t-1}\frac{1}{N}\sum_{i=1}^N  \mbb{E}\Big[\Big\Vert    \big[\nabla f_i(\mb{x}_i^{(\tau)})- \frac{1}{N}\sum_{j=1}^{N} \nabla f_j(\mb{x}_j^{(\tau)}) \big] \Big\Vert^2\Big] \nonumber\\
	\overset{(c)}{\leq}& \frac{6L^2I}{(1-\beta)^2}\sum_{\tau=t_0}^{t-1} \frac{1}{N}\sum_{i=1}^N \mbb{E}\big[\Vert \mb{x}_i^{(\tau)} - \bar{\mb{x}}^{(\tau)}\Vert^2\big] +  \frac{3I}{(1-\beta)^2}\sum_{\tau=t_0}^{t-1} \frac{1}{N}\sum_{i=1}^N\mbb{E}\big[\Vert \nabla f_i(\bar{\mb{x}}^{(\tau)}) - \nabla f(\bar{\mb{x}}^{(\tau)})\Vert^2 \big] \nonumber\\
	\overset{(d)}{\leq}& \frac{6L^2I}{(1-\beta)^2}\sum_{\tau=t_0}^{t-1} \frac{1}{N}\sum_{i=1}^N \mbb{E}\big[\Vert \mb{x}_i^{(\tau)} - \bar{\mb{x}}^{(\tau)}\Vert^2\big] +  \frac{3I^2 \kappa^2}{(1-\beta)^2} \label{eq:nesterov-diff-avg-per-node-eq5}
	\end{align} 
	where (a) follows by applying the basic inequality $\norm{\sum_{i=1}^{n} \mb{a}_i}^2 \leq n \sum_{i=1}^n \norm{\mb{a}_i}^2$ for any vectors $\mb{a}_i$ and any integer $n$ and noting that $\abs{\frac{1-\beta^{t+1-\tau}}{1-\beta}} \leq \frac{1}{1-\beta}$; (b) follows because $0< t-t_0 < I$; (c) follows from Lemma \ref{lm:from-smooth-f};  and (d) follows because $\frac{1}{N}\sum_{i=1}^N\mbb{E}\big[\Vert \nabla f_i(\bar{\mb{x}}^{(\tau)}) - \nabla f(\bar{\mb{x}}^{(\tau)})\Vert^2 \big] \leq \kappa^2$ by Assumption \ref{ass:basic} and $0< t-t_0 <I$.

	Substituting \eqref{eq:nesterov-diff-avg-per-node-eq4}-\eqref{eq:nesterov-diff-avg-per-node-eq5} into \eqref{eq:nesterov-diff-avg-per-node-eq3} yields
	\begin{align}
	\frac{1}{N}\sum_{i=1}^{N}\mbb{E}[\norm{\bar{\mb{x}}^{(t)}  - \mb{x}_i^{(t)}}^2] \leq   \frac{2\gamma^2 I \sigma^2}{(1-\beta)^2} + \frac{12L^2\gamma^2I}{(1-\beta)^2}\sum_{\tau=t_0}^{t-1} \frac{1}{N}\sum_{i=1}^N \mbb{E}\big[\Vert \mb{x}_i^{(\tau)} - \bar{\mb{x}}^{(\tau)}\Vert^2\big] +  \frac{6\gamma^2I^2 \kappa^2}{(1-\beta)^2} \label{eq:nesterov-diff-avg-per-node-eq6}
	\end{align}
	Recall that for each $t$, the index $t_0$ in the above equation is the largest integer such that $t_0 < t$,  $t_0~\text{mod}~I = 0$ and $t-t_0 <  I$. Summing \eqref{eq:nesterov-diff-avg-per-node-eq6} over $t\in \{0,1, \ldots, T\}$ that are not a multiple of $I$ and noting that $\bar{\mb{x}}^{(t)}  - \mb{x}_i^{(t)}=\mb{0}$ if $t$ is a multiple of $I$ yields
	\begin{align}
	\sum_{t=0}^{T-1}\frac{1}{N}\sum_{i=1}^{N}\mbb{E}[\norm{\bar{\mb{x}}^{(t)}  - \mb{x}_i^{(t)}}^2] \leq   \frac{2 \gamma^2 I \sigma^2}{(1-\beta)^2}T+ \frac{12L^2\gamma^2I^2}{(1-\beta)^2}\sum_{t=0}^{T-1}\frac{1}{N}\sum_{i=1}^{N}\mbb{E}[\norm{\bar{\mb{x}}^{(t)}  - \mb{x}_i^{(t)}}^2] +  \frac{6\gamma^2I^2 \kappa^2}{(1-\beta)^2}T\nonumber
	\end{align}
	Collecting common terms and dividing both sides by $1-\frac{12L^2\gamma^2I^2}{(1-\beta)^2}$ yields
	\begin{align}
	\sum_{t=0}^{T-1}\frac{1}{N}\sum_{i=1}^{N}\mbb{E}[\norm{\bar{\mb{x}}^{(t)}  - \mb{x}_i^{(t)}}^2] \leq  \frac{1}{1-\frac{12L^2\gamma^2I^2}{(1-\beta)^2}} \frac{2 \gamma^2 I \sigma^2}{(1-\beta)^2}T + \frac{1}{1- \frac{12L^2\gamma^2I^2}{(1-\beta)^2}} \frac{6\gamma^2I^2 \kappa^2}{(1-\beta)^2}T \label{eq:diff-avg-per-node-eq7}
	\end{align}
\end{proof}

\subsubsection{Main Proof of Theorem \ref{thm:nesterov-rate}} \label{sec:main-pf-thm-nesterov-rate}

It is easy to realize that Lemmas \ref{lm:y-diff}, \ref{lm:y-x-diff} and \ref{lm:nesterov-diff-avg-per-node} developed above are the respective counterparts for Lemmas \ref{lm:z-diff}, \ref{lm:z-x-diff} and \ref{lm:diff-avg-per-node}.  The main proof of Theorem \ref{thm:nesterov-rate} follows similar steps as the proof of Theorem \ref{thm:polyak-rate} in Section \ref{sec:pf-thm-polyak-rate} with the minor changes that Lemmas \ref{lm:z-diff}, \ref{lm:z-x-diff} and \ref{lm:diff-avg-per-node} should be replaced by Lemmas \ref{lm:y-diff}, \ref{lm:y-x-diff} and \ref{lm:nesterov-diff-avg-per-node}, respectively.

Below is the main proof of Theorem \ref{thm:nesterov-rate}.

Fix $t\geq 0$. By the smoothness of function $f(\cdot)$ (in Assumption \ref{ass:basic}), we have 
\begin{align}
\mbb{E}[f(\bar{\mb{y}}^{(t+1)})] \leq \mbb{E}[f(\bar{\mb{y}}^{(t)})] + \mbb{E}[\langle \nabla f(\bar{\mb{y}}^{(t)}), \bar{\mb{y}}^{(t+1)} - \bar{\mb{y}}^{(t)}\rangle] + \frac{L}{2} \mbb{E}[\norm{\bar{\mb{y}}^{(t+1)} - \bar{\mb{y}}^{(t)}}^2]  \label{eq:pf-thm-nesterov-rate-eq1}
\end{align}

By Lemma \ref{lm:y-diff}, we have 
\begin{align}
&\mbb{E}[\langle \nabla f(\bar{\mb{y}}^{(t)}), \bar{\mb{y}}^{(t+1)} - \bar{\mb{y}}^{(t)}\rangle] \nonumber\\
=& -\frac{\gamma}{1-\beta} \mbb{E}[\langle \nabla f(\bar{\mb{y}}^{(t)}), \frac{1}{N}\sum_{i=1}^N\mb{g}_i^{(t)}\rangle]\nonumber\\
\overset{(a)}{=}& -\frac{\gamma}{1-\beta} \mbb{E}[\langle \nabla f(\bar{\mb{y}}^{(t)}), \frac{1}{N}\sum_{i=1}^N\nabla f_i(\mb{x}_i^{(t)})\rangle] \nonumber \\
=& -\frac{\gamma}{1-\beta} \mbb{E}[\langle \nabla f(\bar{\mb{y}}^{(t)}) - \nabla f(\bar{\mb{x}}^{(t)}), \frac{1}{N}\sum_{i=1}^N\nabla f_i(\mb{x}_i^{(t)})\rangle] -\frac{\gamma}{1-\beta} \mbb{E}[\langle \nabla f(\bar{\mb{x}}^{(t)}), \frac{1}{N}\sum_{i=1}^N\nabla f_i(\mb{x}_i^{(t)})\rangle] \label{eq:pf-thm-nesterov-rate-eq2}
\end{align}
where (a) follows because $\bar{\mb{y}}^{(t)}$ and $\mb{x}_i^{(t)}$ are determined by $\bs{\xi}^{[t-1]} = [\bs{\xi}^{(0)}, \ldots, \bs{\xi}^{(t-1)}]$, which is independent of $\bs{\xi}^{(t)}$, and $\mbb{E}[\mb{g}_i^{(t)} | \bs{\xi}^{[t-1]}] = \mbb{E}[\mb{g}_i^{(t)}] = \nabla f_i (\mb{x}_i^{(t)})$.

We note that 
\begin{align}
&-\frac{\gamma}{1-\beta} \langle \nabla f(\bar{\mb{y}}^{(t)})-\nabla f(\bar{\mb{x}}^{(t)}), \frac{1}{N}\sum_{i=1}^N\nabla f_i(\mb{x}_i^{(t)})\rangle  \nonumber\\
\overset{(a)}{\leq}& \frac{1-\beta}{2\beta^3 L} \norm{\nabla f(\bar{\mb{y}}^{(t)})-\nabla f(\bar{\mb{x}}^{(t)})}^2 + \frac{L \gamma^2 \beta^3}{2(1-\beta)^3} \norm{\frac{1}{N}\sum_{i=1}^N\nabla f_i (\mb{x}_i^{(t)})}^2 \nonumber \\
\overset{(b)}{\leq}& \frac{(1-\beta)L}{2\beta^3} \norm{ \bar{\mb{y}}^{(t)}- \bar{\mb{x}}^{(t)}}^2 +\frac{\beta^3 L \gamma^2}{2(1-\beta)^3} \norm{\frac{1}{N}\sum_{i=1}^N\nabla f_i (\mb{x}_i^{(t)})}^2 \label{eq:pf-thm-nesterov-rate-eq3}
\end{align}
where (a) follows by applying the basic inequality $\langle \mb{a}, \mb{b}\rangle \leq \frac{1}{2}\norm{\mb{a}}^2 + \frac{1}{2}\norm{\mb{b}}^2$ with $\mb{a} =- \frac{\sqrt{1-\beta}}{\sqrt{L}\beta^{3/2}} [\nabla f(\bar{\mb{y}}^{(t)})-\nabla f(\bar{\mb{x}}^{(t)})]$ and $\mb{b} = \frac{\gamma \sqrt{L}\beta^{3/2}}{(1-\beta)^{3/2}} \frac{1}{N}\sum_{i=1}^N\nabla f_i(\mb{x}_i^{(t)})$; and (b) follows from the smoothness of function $f(\cdot)$.

Applying the basic identity $\langle \mb{a}, \mb{b}\rangle = \frac{1}{2}[\norm{\mb{a}}^2 + \norm{\mb{b}}^2 - \norm{\mb{a}-\mb{b}}^2]$ with $\mb{a} = \nabla f(\bar{\mb{x}}^{(t)})$ and $\mb{b} =\frac{1}{N}\sum_{i=1}^N\nabla f_i (\mb{x}_i^{(t)})$ yields
\begin{align}
&\langle \nabla f(\bar{\mb{x}}^{(t)}), \frac{1}{N}\sum_{i=1}^N\nabla f_i(\mb{x}_i^{(t)})\rangle \nonumber\\
=&  \frac{1}{2} \Big(\norm{\nabla f(\bar{\mb{x}}^{(t)})}^2 + \norm{\frac{1}{N}\sum_{i=1}^N\nabla f_i(\mb{x}_i^{(t)})}^2 - \norm{\nabla f(\bar{\mb{x}}^{(t)})-\frac{1}{N}\sum_{i=1}^N\nabla f_i(\mb{x}_i^{(t)})}^2 \Big) \nonumber \\
\overset{(a)}{\geq} & \frac{1}{2} \Big(\norm{\nabla f(\bar{\mb{x}}^{(t)})}^2 + \norm{\frac{1}{N}\sum_{i=1}^N\nabla f_i(\mb{x}_i^{(t)})}^2 -  L^2 \frac{1}{N} \sum_{i=1}^N \Vert \bar{\mb{x}}^{(t)} - \mb{x}_i^{(t)}\Vert^2 \Big) \label{eq:pf-thm-nesterov-rate-eq4}
\end{align}
where (a) follows because $\Vert\nabla f(\bar{\mb{x}}^{(t)}) - \frac{1}{N}\sum_{j=1}^N \nabla f_j (\mb{x}_j^{(t)})\Vert^2 = \Vert \frac{1}{N} \sum_{i=1}^N \nabla f_i(\bar{\mb{x}}^{(t)}) - \frac{1}{N}\sum_{i=1}^N \nabla f_i (\mb{x}_i^{(t)})\Vert^2 \leq \frac{1}{N} \sum_{i=1}^{N} \Vert \nabla f_i(\bar{\mb{x}}^{(t)})  - \nabla f_i (\mb{x}_i^{(t)})\Vert^2 \leq \frac{1}{N} \sum_{i=1}^N L^2 \Vert \bar{\mb{x}}^{(t)} - \mb{x}_i^{(t)}\Vert^2$, where the first inequality follows from the convexity of $\Vert \cdot \Vert^2$ and Jensen's inequality and the second inequality follows from the smoothness of each $f_i(\cdot)$.

Substituting \eqref{eq:pf-thm-nesterov-rate-eq3}-\eqref{eq:pf-thm-nesterov-rate-eq4} into \eqref{eq:pf-thm-nesterov-rate-eq2} yields
\begin{align}
&\mbb{E}[\langle \nabla f(\bar{\mb{y}}^{(t)}), \bar{\mb{y}}^{(t+1)} - \bar{\mb{y}}^{(t)}\rangle] \nonumber\\
\leq &\frac{(1-\beta)L}{2\beta^3} \mbb{E}[\norm{\bar{\mb{y}}^{(t)} - \bar{\mb{x}}^{(t)} }^2] + \big( \frac{L \gamma^2\beta^3}{2(1-\beta)^3} - \frac{\gamma}{2(1-\beta)}\big) \mbb{E}[\norm{\frac{1}{N}\sum_{i=1}^N\nabla f_i (\mb{x}_i^{(t)})}^2]  - \frac{\gamma}{2(1-\beta)}\mbb{E}[\norm{\nabla f(\bar{\mb{x}}^{(t)})}^2] \nonumber \\ &+ \frac{\gamma L^2}{2(1-\beta)} \frac{1}{N}\sum_{i=1}^N\mbb{E}[ \Vert \bar{\mb{x}}^{(t)} - \mb{x}_i^{(t)}\Vert^2]  \label{eq:pf-thm-nesterov-rate-eq5}
\end{align}

By Lemma \ref{lm:y-diff}, we have 
\begin{align}
\mbb{E}[\norm{\bar{\mb{z}}^{(t+1)} - \bar{\mb{z}}^{(t)}}^2] = \frac{\gamma^2}{(1-\beta)^2} \mbb{E}[\norm{\frac{1}{N}\sum_{i=1}^N \mb{g}_i^{(t)}}^2]  \label{eq:pf-thm-nesterov-rate-eq6}
\end{align}

Substituting \eqref{eq:pf-thm-nesterov-rate-eq5}-\eqref{eq:pf-thm-nesterov-rate-eq6} into \eqref{eq:pf-thm-nesterov-rate-eq1} yields

\begin{align}
\mbb{E}[f(\bar{\mb{y}}^{(t+1)})] \leq &\mbb{E}[f(\bar{\mb{y}}^{(t)})] + \frac{(1-\beta)L}{2\beta^3}  \mbb{E}[\norm{\bar{\mb{y}}^{(t)} - \bar{\mb{x}}^{(t)} }^2] + \big( \frac{L \gamma^2\beta^3}{2(1-\beta)^3} - \frac{\gamma}{2(1-\beta)}\big) \mbb{E}[\norm{\frac{1}{N}\sum_{i=1}^N\nabla f_i (\mb{x}_i^{(t)})}^2] \nonumber\\ &- \frac{\gamma}{2(1-\beta)}\mbb{E}[\norm{\nabla f(\bar{\mb{x}}^{(t)})}^2] + \frac{\gamma L^2}{2(1-\beta)} \frac{1}{N}\sum_{i=1}^N\mbb{E}[ \Vert \bar{\mb{x}}^{(t)} - \mb{x}_i^{(t)}\Vert^2] + \frac{\gamma^2L}{2(1-\beta)^2} \mbb{E}[\norm{\frac{1}{N}\sum_{i=1}^N \mb{g}_i^{(t)}}^2]
\end{align}
Dividing both sides by $\frac{\gamma}{2(1-\beta)}$ and rearranging terms yields
\begin{align}
\mbb{E}[\norm{\nabla f(\bar{\mb{x}}^{(t)})}^2] \leq &\frac{2(1-\beta)}{\gamma} \big(\mbb{E}[f(\bar{\mb{y}}^{(t)})] -\mbb{E}[f(\bar{\mb{y}}^{(t+1)})]  \big) - (1- \frac{L \gamma\beta^3}{(1-\beta)^2})\mbb{E}[\norm{\frac{1}{N}\sum_{i=1}^N\nabla f_i (\mb{x}_i^{(t)})}^2]  \nonumber\\
&+ \frac{(1-\beta)^2 L}{ \gamma\beta^3} \mbb{E}[\norm{ \bar{\mb{y}}^{(t)} - \bar{\mb{x}}^{(t)} }^2] +  \frac{\gamma L}{(1-\beta)} \mbb{E}[\norm{\frac{1}{N}\sum_{i=1}^N \mb{g}_i^{(t)}}^2] + L^2 \frac{1}{N}\sum_{i=1}^N\mbb{E}[ \Vert \bar{\mb{x}}^{(t)} - \mb{x}_i^{(t)}\Vert^2]
\end{align}
Summing over $t\in\{0,1,\ldots, T-1\}$ 
\begin{align}
&\sum_{t=0}^{T-1}\mbb{E}[\norm{\nabla f(\bar{\mb{x}}^{(t)})}^2] \nonumber \\
\leq &\frac{2(1-\beta)}{\gamma } \big(\mbb{E}[f(\bar{\mb{y}}^{(0)})] -\mbb{E}[f(\bar{\mb{y}}^{(T)})]  \big) - \Big(1- \frac{L\gamma\beta^3}{(1-\beta)^2}\Big) \sum_{t=0}^{T-1}\mbb{E}[\norm{\frac{1}{N}\sum_{i=1}^N\nabla f_i (\mb{x}_i^{(t)})}^2] \nonumber\\
&+ \frac{(1-\beta)^2 L}{\gamma\beta^3} \sum_{t=0}^{T-1}\mbb{E}[\norm{ \bar{\mb{y}}^{(t)} - \bar{\mb{x}}^{(t)} }^2] + \frac{L\gamma}{(1-\beta)} \sum_{t=0}^{T-1}\mbb{E}[\norm{\frac{1}{N}\sum_{i=1}^N \mb{g}_i^{(t)}}^2] + L^2 \sum_{t=0}^{T-1} \frac{1}{N}\sum_{i=1}^N\mbb{E}[ \Vert \bar{\mb{x}}^{(t)} - \mb{x}_i^{(t)}\Vert^2] \nonumber \\
\overset{(a)}{\leq} &\frac{2(1-\beta)}{\gamma } \big(\mbb{E}[f(\bar{\mb{y}}^{(0)})] -\mbb{E}[f(\bar{\mb{y}}^{(T)})]  \big) - \Big(1- \frac{L\gamma\beta^3}{(1-\beta)^2}\Big) \sum_{t=0}^{T-1}\mbb{E}[\norm{\frac{1}{N}\sum_{i=1}^N\nabla f_i (\mb{x}_i^{(t)})}^2]  \nonumber \\ & + \frac{L\gamma}{(1-\beta)^2} \sum_{t=0}^{T-1}\mbb{E}[\norm{\frac{1}{N}\sum_{i=1}^N \mb{g}_i^{(t)}}^2] + L^2 \sum_{t=0}^{T-1} \frac{1}{N}\sum_{i=1}^N\mbb{E}[ \Vert \bar{\mb{x}}^{(t)} - \mb{x}_i^{(t)}\Vert^2] \nonumber\\ 
\overset{(b)}{\leq} &\frac{2(1-\beta)}{\gamma } \big(\mbb{E}[f(\bar{\mb{y}}^{(0)})] -\mbb{E}[f(\bar{\mb{y}}^{(T)})]  \big) -  \Big(1- \frac{ (1+\beta^3)L\gamma}{(1-\beta)^2}\Big) \sum_{t=0}^{T-1}\mbb{E}[\norm{\frac{1}{N}\sum_{i=1}^N\nabla f_i (\mb{x}_i^{(t)})}^2]  +  \frac{L\gamma }{(1-\beta)^2}  \frac{\sigma^2}{N}T \nonumber \\ &+ \frac{1}{1-\frac{12L^2\gamma^2I^2}{(1-\beta)^2}} \frac{4 L^2\gamma^2 I \sigma^2}{(1-\beta)^2} T+ \frac{1}{1- \frac{12L^2\gamma^2I^2}{(1-\beta)^2}} \frac{6L^2\gamma^2I^2 \kappa^2}{(1-\beta)^2}T  \nonumber\\
\overset{(c)}{\leq} &\frac{2(1-\beta)}{\gamma } \big(f(\bar{\mb{x}}^{(0)}) - f^\ast \big) +  \frac{L\gamma }{(1-\beta)^2}  \frac{\sigma^2}{N}T+ \frac{1}{1-\frac{12L^2\gamma^2I^2}{(1-\beta)^2}} \frac{4 L^2\gamma^2 I \sigma^2}{(1-\beta)^2} T+ \frac{1}{1- \frac{12L^2\gamma^2I^2}{(1-\beta)^2}} \frac{6L^2\gamma^2I^2 \kappa^2}{(1-\beta)^2}T
\end{align}
where (a) follows by using Lemma \ref{lm:y-x-diff} and $\frac{\beta L\gamma}{(1-\beta)^2} + \frac{L\gamma}{1-\beta} = \frac{L\gamma}{(1-\beta)^2}$; (b) follows by applying Lemma \ref{lm:nesterov-diff-avg-per-node} and by noting that $\mbb{E}[\norm{\frac{1}{N}\sum_{i=1}^N \mb{g}_i^{(t)}}^2] \leq  \frac{1}{N}\sigma^{2}  +  \mathbb{E} [ \Vert \frac{1}{N} \sum_{i=1}^{N} \nabla f_{i}(\mathbf{x}_{i}^{(t)})\Vert^{2}]$ by Lemma \ref{lm:expected-gradient} ; and (c) follows because $\gamma $ is chosen to ensure $1- \frac{ (1+\beta^3)L\gamma}{(1-\beta)^2} \geq 0$, $\bar{\mb{y}}^{(0)} = \bar{\mb{x}}^{(0)}$ by the definition in \eqref{eq:def-y}, and $f^\ast$ is the minimum value of problem \eqref{eq:sto-opt}.

Dividing both sides by $T$ yields
\begin{align}
&\frac{1}{T} \sum_{t=0}^{T-1}\mbb{E}[\norm{\nabla f(\bar{\mb{x}}^{(t)})}^2] \nonumber \\
\leq&\frac{2(1-\beta)}{\gamma T} \big(f(\bar{\mb{x}}^{(0)}) - f^\ast \big) +  \frac{L\gamma }{(1-\beta)^2}  \frac{\sigma^2}{N}+ \frac{1}{1-\frac{12L^2\gamma^2I^2}{(1-\beta)^2}} \frac{4 L^2\gamma^2 I \sigma^2}{(1-\beta)^2} + \frac{1}{1- \frac{12L^2\gamma^2I^2}{(1-\beta)^2}} \frac{6L^2\gamma^2I^2 \kappa^2}{(1-\beta)^2} \nonumber\\
\overset{(a)}{\leq}&\frac{2(1-\beta)}{\gamma T} \big(f(\bar{\mb{x}}^{(0)}) - f^\ast \big) +  \frac{L\gamma }{(1-\beta)^2}  \frac{\sigma^2}{N}+  \frac{3 L^2\gamma^2 I \sigma^2}{(1-\beta)^2} +  \frac{9L^2\gamma^2I^2 \kappa^2}{(1-\beta)^2} \nonumber\\
=& O(\frac{1}{\gamma T})  + O(\frac{\gamma}{N} \sigma^2) + O(\gamma^2 I \sigma^2)+ O(\gamma^2 I^2 \kappa^2)
\end{align}
where (a) follows because $I \leq \frac{1-\beta}{6L\gamma}$ is chosen to ensure $\frac{1}{1-\frac{12L^2\gamma^2I^2}{(1-\beta)^2}} \leq \frac{3}{2}$.

\subsection{Proof of Theorem \ref{thm:decentralized-rate}} \label{sec:pf-thm-decentralized-rate}
This section provides the complete proof for Algorithm \ref{alg:prsgd-decentralized} with Option I (Polyak's momentum).  An extension to Algorithm \ref{alg:prsgd-decentralized} with Option II can be similarly done as our extension in Section \ref{sec:pf-thm-nesterov-rate} for Algorithm \ref{alg:prsgd-momentum} from Option I to Option II.

Let $\bar{\mb{u}}^{(t)}$ and $\bar{\mb{x}}^{(t)}$ (using the forms in \eqref{eq:def-u-bar} and \eqref{eq:def-x-bar}, respectively) be the $N$ node averages of local variables $\mb{u}_i^{(t)}$ and $\mb{x}_i^{(t)}$ from Algorithm \ref{alg:prsgd-decentralized} with Option I.  It is not difficult to show that $\bar{\mb{u}}^{(t)}$ and $\bar{\mb{x}}^{(t)}$ satisfiy the following fact.

\begin{Fact}\label{fact:decentralized-bar-dynamic}
Let $\bar{\mb{u}}^{(t)} \defeq \frac{1}{N} \sum_{i=1}^{N} \mb{u}_{i}^{(t)}$ and $\bar{\mb{x}}^{(t)} \defeq \frac{1}{N}\sum_{i=1}^N \mb{x}_{i}^{(t)}$ be node averages of local variables from Algorithm \ref{alg:prsgd-decentralized} with Option I. For all $ t \geq 1$, we have 
\begin{align}
\begin{cases}
\bar{\mb{u}}^{(t)} &= \beta \bar{\mb{u}}^{(t-1)} + \frac{1}{N}\sum_{i=1}^N \mb{g}_i^{(t-1)} \\
\bar{\mb{x}}^{(t)}  &= \bar{\mb{x}}^{(t-1)}  - \gamma  \bar{\mb{u}}^{(t)}
\end{cases} \label{eq:bar-polyak-decentralized}
\end{align}
\end{Fact}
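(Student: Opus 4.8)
The plan is to exploit the fact that the mixing matrix $\mb{W}$ is doubly stochastic, so that the node-averaging operation commutes with the local aggregation step \eqref{eq:prsgd-avg-decentralized}. First I would record the elementary cancellation: since each row of $\mb{W}$ sums to one, we have $\sum_{i=1}^N W_{ji} = 1$ for every $j$, and therefore
\begin{align}
\bar{\mb{u}}^{(t)} = \frac{1}{N}\sum_{i=1}^N \mb{u}_i^{(t)} = \frac{1}{N}\sum_{i=1}^N \sum_{j=1}^N \tilde{\mb{u}}_j^{(t)} W_{ji} = \frac{1}{N}\sum_{j=1}^N \tilde{\mb{u}}_j^{(t)}\Big(\sum_{i=1}^N W_{ji}\Big) = \frac{1}{N}\sum_{j=1}^N \tilde{\mb{u}}_j^{(t)}, \nonumber
\end{align}
and the identical computation applied to the second line of \eqref{eq:prsgd-avg-decentralized} gives $\bar{\mb{x}}^{(t)} = \frac{1}{N}\sum_{j=1}^N \tilde{\mb{x}}_j^{(t)}$. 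In words: the decentralized mixing leaves the global average of both the momentum buffers and the iterates unchanged.

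Next I would substitute the local update rules of Option I in \eqref{eq:decentralized-polyak}. Averaging $\tilde{\mb{u}}_i^{(t)} = \beta \mb{u}_i^{(t-1)} + \mb{g}_i^{(t-1)}$ over $i$ and using the first identity above yields $\bar{\mb{u}}^{(t)} = \beta \bar{\mb{u}}^{(t-1)} + \frac{1}{N}\sum_{i=1}^N \mb{g}_i^{(t-1)}$, which is the first equation of \eqref{eq:bar-polyak-decentralized}. Likewise, averaging $\tilde{\mb{x}}_i^{(t)} = \mb{x}_i^{(t-1)} - \gamma \tilde{\mb{u}}_i^{(t)}$ and invoking both identities gives $\bar{\mb{x}}^{(t)} = \bar{\mb{x}}^{(t-1)} - \gamma \bar{\mb{u}}^{(t)}$, the second equation, valid for all $t\geq 1$. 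This completes the argument.

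There is no real obstacle here: the entire content is the doubly-stochastic cancellation $\sum_{i=1}^N W_{ji} = 1$, after which the recursion follows from linearity of averaging. The point worth emphasizing is that $\bar{\mb{u}}^{(t)}$ and $\bar{\mb{x}}^{(t)}$ are deliberately reused with the meanings from \eqref{eq:def-u-bar}--\eqref{eq:def-x-bar}, so that Fact \ref{fact:decentralized-bar-dynamic} makes the node-average dynamics of Algorithm \ref{alg:prsgd-decentralized} with Option I formally identical to \eqref{eq:bar-polyak} for Algorithm \ref{alg:prsgd-momentum}. This is exactly what later permits the proof of Theorem \ref{thm:decentralized-rate} to reuse the auxiliary sequence $\bar{\mb{z}}^{(t)}$ together with Lemmas \ref{lm:z-diff}--\ref{lm:z-x-diff} verbatim; only the per-node deviation bound (the analogue of Lemma \ref{lm:diff-avg-per-node}) requires a genuinely new argument, there driven by the spectral gap parameter $\rho$ of Assumption \ref{ass:mix-matrix} rather than the synchronization interval $I$.
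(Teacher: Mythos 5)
Your proof is correct and follows essentially the same route as the paper's: use double stochasticity of $\mb{W}$ to show the node average is invariant under the mixing step \eqref{eq:prsgd-avg-decentralized}, then average the Option I local updates \eqref{eq:decentralized-polyak} and apply linearity. (Your write-up is in fact cleaner than the paper's, which contains a superscript typo $\tilde{\mb{u}}_i^{(t-1)}$ in its final step where $\tilde{\mb{u}}_i^{(t)}$ is intended.)
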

\begin{proof}
By the definition of $\bar{\mb{u}}^{(t)}$, we have 
\begin{align}
\bar{\mb{u}}^{(t)} =& \frac{1}{N} \sum_{i=1}^{N} \mb{u}_{i}^{(t)} \nonumber \\
\overset{(a)}{=}& \frac{1}{N} \sum_{i=1}^N \sum_{j=1}^N \tilde{\mb{u}}_{j}^{(t)} W_{ji} \nonumber\\
=&  \frac{1}{N} \sum_{j=1}^N \tilde{\mb{u}}_{j}^{(t)} [\sum_{j=1}^N W_{ji}] \nonumber\\
\overset{(b)}{=} &\frac{1}{N} \sum_{i=1}^N \tilde{\mb{u}}_{i}^{(t)} \label{eq:pf-fact-decentralized-bar-eq1}\\
\overset{(c)}{=} & \beta \frac{1}{N}\sum_{i=1}^{N}\mb{u}_{i}^{(t-1)} + \frac{1}{N} \sum_{i=1}^N \mb{g}_i^{(t-1)} \nonumber\\
\overset{(d)}{=}& \beta \bar{\mb{u}}^{(t-1)} + \frac{1}{N} \sum_{i=1}^N \mb{g}_i^{(t-1)} \nonumber
\end{align}
where (a) follows by substituting the first equation in \eqref{eq:prsgd-avg-decentralized}; (b) follows by recalling that $\sum_{j=1}^N W_{ji}=1$ for doubly stochastic matrix $\mb{W}$; (c) follows by substituting the first equation in \eqref{eq:decentralized-polyak}; and (d) follows from the definition of $\bar{\mb{u}}^{(t)}$.

Using a similar argument, we can show
\begin{align*}
\bar{\mb{x}}^{(t)} \overset{(a)}{=}& \frac{1}{N} \sum_{i=1}^{N} \mb{x}_{i}^{(t)} \nonumber \\
\overset{(b)}{=}& \frac{1}{N} \sum_{i=1}^N \sum_{j=1}^N \tilde{\mb{x}}_{j}^{(t)} W_{ji} \nonumber\\
=& \frac{1}{N} \sum_{j=1}^N  \tilde{\mb{x}}_{j}^{(t)} [\sum_{i=1}^NW_{ji}] \nonumber\\
\overset{(c)}{=}& \frac{1}{N}\sum_{i=1}^{N}\tilde{\mb{x}}_{i}^{(t)} \nonumber\\
\overset{(d)}{=} &  \frac{1}{N}\sum_{i=1}^{N} [\mb{x}_{i}^{(t-1)} - \gamma \tilde{\mb{u}}_i^{(t-1)} ]\nonumber\\
=& \bar{\mb{x}}^{(t-1)} -\gamma  \bar{\mb{u}}^{(t-1)} \nonumber
\end{align*}
where (a) follows from the definition of $\bar{\mb{x}}^{(t)}$; (b) follows by substituting the second equation in \eqref{eq:prsgd-avg-decentralized}; (c) follows byrecalling that $\sum_{j=1}^N W_{ji}=1$ for doubly stochastic matrix $\mb{W}$; and (d) follows by using the definition of $\bar{\mb{x}}^{(t)}$ and equation \eqref{eq:pf-fact-decentralized-bar-eq1}.

\end{proof}

It is remarkable that Fact \ref{fact:decentralized-bar-dynamic} implies that even if we decentralized local averaging in Algorithm \ref{alg:prsgd-decentralized}, the yielded global averages $\{\bar{\mb{u}}^{(t)}\}_{t\geq 0}$ and $\{\bar{\mb{x}}^{(t)}\}_{t\geq 0}$ follow the same dynamics as the global averages in Algorithm \ref{alg:prsgd-momentum}. 

Let $\bar{\mb{x}}^{(t)}= \frac{1}{N}\sum_{i=1}^N \mb{x}_{i}^{(t)}$ be node averages of local variables $\mb{x}_{i}^{(t)}$ from Algorithm \ref{alg:prsgd-decentralized} with Option I.  We again define the auxiliary sequence $\{\bar{\mb{z}}^{(t)}\}_{t\geq 0}$ via 
\begin{align}
\bar{\mb{z}}^{(t)} \defeq \left \{  \begin{array}{ll} \bar{\mb{x}}^{(t)}, & \quad t=0 \\
\frac{1}{1-\beta} \bar{\mb{x}}^{(t)} - \frac{\beta}{1-\beta}\bar{\mb{x}}^{(t-1)}, &\quad t\geq 1 \end{array} \right. \label{eq:def-z-decentralized}
\end{align}

Since \eqref{eq:bar-polyak-decentralized} and \eqref{eq:def-z-decentralized} are respectively identical to \eqref{eq:bar-polyak} and \eqref{eq:def-z} for Algorithm \ref{alg:prsgd-momentum} with Option I, the following two lemmas can be proven using exactly the same steps in the proofs for Lemmas \ref{lm:z-diff} and \ref{lm:z-x-diff}.

\begin{Lem}\label{lm:z-diff-decentralized}
Consider the sequence $\{\bar{\mb{z}}^{(t)}\}_{t\geq 0}$ defined in \eqref{eq:def-z-decentralized}. Algorithm \ref{alg:prsgd-decentralized} with Option I ensures that for all $t\geq 0$, we have 
\begin{align}
\bar{\mb{z}}^{(t+1)} - \bar{\mb{z}}^{(t)} = -\frac{\gamma}{1-\beta}\frac{1}{N}\sum_{i=1}^N \mb{g}_i^{(t)}.
\end{align}
\end{Lem}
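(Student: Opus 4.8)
The plan is to recognize that Lemma \ref{lm:z-diff-decentralized} reduces, line for line, to Lemma \ref{lm:z-diff}. The key observation is already packaged in Fact \ref{fact:decentralized-bar-dynamic}: the node averages $\bar{\mb{u}}^{(t)}$ and $\bar{\mb{x}}^{(t)}$ generated by Algorithm \ref{alg:prsgd-decentralized} with Option I obey the recursion \eqref{eq:bar-polyak-decentralized}, which has exactly the same form as \eqref{eq:bar-polyak}, and the auxiliary sequence in \eqref{eq:def-z-decentralized} is given by the identical formula as \eqref{eq:def-z}. Since the proof of Lemma \ref{lm:z-diff} uses \emph{only} the two recursions in \eqref{eq:bar-polyak}, the definition \eqref{eq:def-z}, and the initialization $\bar{\mb{u}}^{(0)}=\mb{0}$, the very same argument goes through verbatim for Algorithm \ref{alg:prsgd-decentralized}.

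Concretely, I would split into the cases $t=0$ and $t\geq 1$. For $t=0$, use $\bar{\mb{u}}^{(0)}=\mb{0}$ and the second line of \eqref{eq:bar-polyak-decentralized} to get $\bar{\mb{x}}^{(1)}-\bar{\mb{x}}^{(0)} = -\gamma\bar{\mb{u}}^{(1)} = -\gamma\frac{1}{N}\sum_{i=1}^N\mb{g}_i^{(0)}$, and then \eqref{eq:def-z-decentralized} gives $\bar{\mb{z}}^{(1)}-\bar{\mb{z}}^{(0)} = \frac{1}{1-\beta}(\bar{\mb{x}}^{(1)}-\bar{\mb{x}}^{(0)}) = -\frac{\gamma}{1-\beta}\frac{1}{N}\sum_{i=1}^N\mb{g}_i^{(0)}$. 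For $t\geq 1$, expand $\bar{\mb{z}}^{(t+1)}-\bar{\mb{z}}^{(t)}$ via \eqref{eq:def-z-decentralized} into $\frac{1}{1-\beta}(\bar{\mb{x}}^{(t+1)}-\bar{\mb{x}}^{(t)}) - \frac{\beta}{1-\beta}(\bar{\mb{x}}^{(t)}-\bar{\mb{x}}^{(t-1)})$, substitute $\bar{\mb{x}}^{(t+1)}-\bar{\mb{x}}^{(t)}=-\gamma\bar{\mb{u}}^{(t+1)}$ and $\bar{\mb{x}}^{(t)}-\bar{\mb{x}}^{(t-1)}=-\gamma\bar{\mb{u}}^{(t)}$ from \eqref{eq:bar-polyak-decentralized}, and finally use $\bar{\mb{u}}^{(t+1)}=\beta\bar{\mb{u}}^{(t)}+\frac{1}{N}\sum_{i=1}^N\mb{g}_i^{(t)}$ to cancel the $\bar{\mb{u}}^{(t)}$ contributions, leaving $-\frac{\gamma}{1-\beta}\frac{1}{N}\sum_{i=1}^N\mb{g}_i^{(t)}$.

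The main (and essentially only) substantive ingredient is Fact \ref{fact:decentralized-bar-dynamic} itself, which is already proven: the row-sums-to-one property of the doubly stochastic mixing matrix $\mb{W}$ makes the local mixing step \eqref{eq:prsgd-avg-decentralized} invisible at the level of global averages, so $\frac{1}{N}\sum_i\mb{u}_i^{(t)}=\frac{1}{N}\sum_i\tilde{\mb{u}}_i^{(t)}$ and likewise for $\mb{x}$, which is what forces \eqref{eq:bar-polyak-decentralized} to coincide with \eqref{eq:bar-polyak}. Once that reduction is noted there is no obstacle; the remaining computation is purely algebraic and identical to the centralized case. I would therefore state the proof as: ``this follows from exactly the same computation as in the proof of Lemma \ref{lm:z-diff}, using Fact \ref{fact:decentralized-bar-dynamic} and \eqref{eq:def-z-decentralized} in place of \eqref{eq:bar-polyak} and \eqref{eq:def-z},'' optionally reproducing the two-case derivation above for completeness.
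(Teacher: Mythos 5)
Your proposal is correct and matches the paper's own argument exactly: the paper likewise observes that, by Fact \ref{fact:decentralized-bar-dynamic}, the global averages from Algorithm \ref{alg:prsgd-decentralized} satisfy the recursion \eqref{eq:bar-polyak-decentralized}, which is identical to \eqref{eq:bar-polyak}, and that \eqref{eq:def-z-decentralized} is identical to \eqref{eq:def-z}, so the lemma follows by exactly the same steps as the proof of Lemma \ref{lm:z-diff}. Your explicit two-case computation is the same one the paper carries out for Lemma \ref{lm:z-diff}, so nothing is missing.
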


\begin{Lem} \label{lm:z-x-diff-decentralized}
	Let $\{\bar{\mb{x}}^{(t)}\}_{t\geq 0}$ defined in \eqref{eq:def-x-bar} be the global node averages of local solutions from Algorithm \ref{alg:prsgd-decentralized} with Option I.  Let $\{\bar{\mb{z}}^{(t)}\}_{t\geq 0}$ be defined in \eqref{eq:def-z-decentralized}. For all $T\geq 1$, Algorithm \ref{alg:prsgd-decentralized} with Option I ensures that 
	\begin{align}
		\sum_{t=0}^{T-1} \norm{ \bar{\mb{z}}^{(t)} - \bar{\mb{x}}^{(t)} }^2 \leq \frac{\gamma^2 \beta^2}{(1-\beta)^4}  \sum_{t=0}^{T-1} \Big\Vert \Big[\frac{1}{N} \sum_{i=1}^N \mb{g}_i^{(t)} \Big] \Big\Vert^2
	\end{align}
\end{Lem}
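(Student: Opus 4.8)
The plan is to recognize that Lemma~\ref{lm:z-x-diff-decentralized} is, line for line, the decentralized analogue of Lemma~\ref{lm:z-x-diff}, and that every ingredient used in the latter's proof is already available. The key observation is Fact~\ref{fact:decentralized-bar-dynamic}: it shows that the node averages $\bar{\mb{u}}^{(t)}$ and $\bar{\mb{x}}^{(t)}$ produced by Algorithm~\ref{alg:prsgd-decentralized} with Option~I obey the recursion \eqref{eq:bar-polyak-decentralized}, which is syntactically identical to \eqref{eq:bar-polyak}. Together with $\bar{\mb{u}}^{(0)} = \mb{0}$ (since $\mb{u}_i^{(0)} = \mb{0}$ for all $i$) and the fact that $\bar{\mb{z}}^{(t)}$ in \eqref{eq:def-z-decentralized} is defined exactly as in \eqref{eq:def-z}, this is enough to carry out the argument verbatim.

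Concretely, I would proceed as follows. First, unroll the first line of \eqref{eq:bar-polyak-decentralized} to obtain $\bar{\mb{u}}^{(t)} = \sum_{\tau=0}^{t-1} \beta^{t-1-\tau}\big[\tfrac{1}{N}\sum_{i=1}^N \mb{g}_i^{(\tau)}\big]$ for $t \geq 1$. Second, use the definition \eqref{eq:def-z-decentralized} together with the second line of \eqref{eq:bar-polyak-decentralized} to write $\bar{\mb{z}}^{(t)} - \bar{\mb{x}}^{(t)} = \tfrac{\beta}{1-\beta}[\bar{\mb{x}}^{(t)} - \bar{\mb{x}}^{(t-1)}] = -\tfrac{\beta\gamma}{1-\beta}\bar{\mb{u}}^{(t)}$ for $t \geq 1$. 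Third, combine these two identities, and for each fixed $t \geq 1$ apply Jensen's inequality to the convex map $\norm{\cdot}^2$ with the weights $\beta^{t-1-\tau}/s_t$, where $s_t \defeq \sum_{\tau=0}^{t-1}\beta^{t-1-\tau} = \tfrac{1-\beta^t}{1-\beta} \leq \tfrac{1}{1-\beta}$, giving $\norm{\bar{\mb{z}}^{(t)} - \bar{\mb{x}}^{(t)}}^2 \leq \tfrac{\gamma^2\beta^2}{(1-\beta)^3}\sum_{\tau=0}^{t-1}\beta^{t-1-\tau}\norm{\tfrac{1}{N}\sum_{i=1}^N \mb{g}_i^{(\tau)}}^2$. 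Fourth, sum over $t \in \{0,1,\ldots,T-1\}$ (the $t=0$ term vanishes since $\bar{\mb{z}}^{(0)} = \bar{\mb{x}}^{(0)}$), interchange the order of summation, and bound the inner geometric sum $\sum_{l=\tau+1}^{T-1}\beta^{l-1-\tau} \leq \tfrac{1}{1-\beta}$ to pick up the remaining factor $\tfrac{1}{1-\beta}$, which yields the claimed bound with constant $\tfrac{\gamma^2\beta^2}{(1-\beta)^4}$. Alternatively, and even more economically, since $\bar{\mb{z}}^{(t)} - \bar{\mb{x}}^{(t)} = -\tfrac{\beta\gamma}{1-\beta}\bar{\mb{u}}^{(t)}$ depends only on the averaged-sequence recursion—which Lemma~\ref{lm:z-diff-decentralized} and Fact~\ref{fact:decentralized-bar-dynamic} certify to be the same as in the centralized case—the conclusion of Lemma~\ref{lm:z-x-diff} transfers without change.

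There is essentially no genuine obstacle here: the only point that must be verified, and it is exactly the content of Fact~\ref{fact:decentralized-bar-dynamic}, is that the decentralized per-iteration mixing step \eqref{eq:prsgd-avg-decentralized} leaves the global averages $\bar{\mb{u}}^{(t)}$, $\bar{\mb{x}}^{(t)}$ unchanged, which follows from $\mb{W}$ being doubly stochastic ($\sum_{j=1}^N W_{ji} = 1$). Once that reduction is in place, the proof is a direct transcription of the proof of Lemma~\ref{lm:z-x-diff} with \eqref{eq:bar-polyak} replaced by \eqref{eq:bar-polyak-decentralized} and \eqref{eq:def-z} by \eqref{eq:def-z-decentralized}, so I would present it in that compressed form rather than re-deriving every step.
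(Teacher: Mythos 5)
Your proposal is correct and matches the paper's approach exactly: the paper likewise observes that Fact~\ref{fact:decentralized-bar-dynamic} makes \eqref{eq:bar-polyak-decentralized} and \eqref{eq:def-z-decentralized} identical to \eqref{eq:bar-polyak} and \eqref{eq:def-z}, and then states that Lemma~\ref{lm:z-x-diff-decentralized} follows by repeating the proof of Lemma~\ref{lm:z-x-diff} verbatim. The steps you transcribe (unrolling $\bar{\mb{u}}^{(t)}$, the identity $\bar{\mb{z}}^{(t)}-\bar{\mb{x}}^{(t)}=-\tfrac{\beta\gamma}{1-\beta}\bar{\mb{u}}^{(t)}$, Jensen's inequality with weights $\beta^{t-1-\tau}/s_t$, and the final geometric-sum bound) are precisely those of the paper's proof of Lemma~\ref{lm:z-x-diff}.
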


To extend the proof in Section \ref{sec:main-pf-thm-polyak-rate} (for Theorem \ref{alg:prsgd-momentum}) to analyze the convergence rate for Algorithm \ref{alg:prsgd-decentralized}, the outstanding challenge is to provide a tight upper bound for quantity $\sum_{t=0}^{T-1} \frac{1}{N} \sum_{i=1}^N \mbb{E}[\norm{\bar{\mb{x}}^{(t)} - \mb{x}_i^{(t)}}^2]$, i.e., to develop an counterpart of Lemma \ref{lm:diff-avg-per-node}.

For each $t\geq 0$, let 
\begin{align}
\begin{cases}
\mb{G}^{(t)} \defeq \big[\mb{g}_1^{(t)}, \mb{g}_2^{(t)}, \ldots, \mb{g}_N^{(t)}\big] \\
\mb{U}^{(t)} \defeq \big[\mb{u}_1^{(t)}, \mb{u}_2^{(t)}, \ldots, \mb{u}_N^{(t)}\big]\\
\widetilde{\mb{U}}^{(t)} \defeq \big[\tilde{\mb{u}}_1^{(t)}, \tilde{\mb{u}}_2^{(t)}, \ldots, \tilde{\mb{u}}_N^{(t)}\big]\\
\mb{X}^{(t)} \defeq \big[\mb{x}_1^{(t)}, \mb{x}_2^{(t)}, \ldots, \mb{x}_N^{(t)}\big]   
\end{cases}\label{eq:def-matrix-var}
\end{align}
be  $m \times N$ matrices that concatenate local variables $\mb{g}_i^{(t)}$, $\mb{u}_i^{(t)}$, $\tilde{\mb{u}}_i^{(t)}$ and $\mb{x}_i^{(t)}$ for all $N$ nodes.  Recall that the Frobenius norm for any $m\times N$ matrix $\mb{Z}$ satisfies $\norm{\mb{Z}}_F^2 = \sum_{i=1}^N \norm{\mb{z}_i}^2$ where $\norm{\mb{z}_i}^2$ is the Euclidean norm of the $i$-th column of matrix $\mb{Z}$. It can be easily verified that  $\sum_{i=1}^N \norm{ \mb{x}_i^{(t)} - \bar{\mb{x}}^{(t)} }^2 = \norm{\mb{X}^{(t)}(\mb{I} - \mb{Q})}_F^2 $ where 
\begin{align}
\mb{Q} \defeq \frac{1}{N}\mb{1}_N \mb{1}_N\tran \label{eq:def-Q-matrix}
\end{align}
is an $N\times N$ matrix where all entries are $1/N$ and $\mb{I}$ is the identity matrix for which the dimensions are obvious in its context.  

Due to the asymmetry in computing local averages for different nodes in \eqref{eq:prsgd-avg-decentralized},  it is easier to provide the desired counterpart of Lemma \ref{lm:diff-avg-per-node} by studying the equivalent matrix form $\norm{\mb{X}^{(t)}(\mb{I}- \mb{Q})}_F^2$ .  The technique of introducing matrix forms to analyze the convergence rate has been previously used in \cite{Lian17NIPS, Tang18ArXiv, WangJoshi18ArXiv} for the analysis of decentralized SGD without momentum.

The following useful facts are simple in matrix analysis \cite{book_MatrixAnalysis}. Similar facts are used in \cite{Lian17NIPS, WangJoshi18ArXiv}.

\begin{Fact}\label{fact:frobenius-norm-inequality}
Let $\mb{A}_i, i \in\{1,2,\ldots, n\}$ be $n$ arbitrary real square matrices. It follows that $\norm{\sum_{i=1}^n \mb{A}_i}_F^2 \leq \sum_{i=1}^{n} \sum_{j=1}^n \norm{\mb{A}_i}_F  \norm{\mb{A}_i}_F$.
\end{Fact}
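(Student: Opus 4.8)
The plan is to derive the inequality from the elementary fact that $\norm{\cdot}_F$ is the norm induced by the Frobenius inner product $\langle \mb{A},\mb{B}\rangle_F \defeq \text{tr}(\mb{A}\tran\mb{B})$, so that $\norm{\mb{A}}_F^2 = \langle \mb{A},\mb{A}\rangle_F$ for every real square matrix $\mb{A}$. First I would expand the left-hand side by bilinearity of $\langle\cdot,\cdot\rangle_F$:
\begin{align}
\Big\Vert \sum_{i=1}^n \mb{A}_i \Big\Vert_F^2 \;=\; \Big\langle \sum_{i=1}^n \mb{A}_i,\ \sum_{j=1}^n \mb{A}_j \Big\rangle_F \;=\; \sum_{i=1}^n\sum_{j=1}^n \langle \mb{A}_i,\mb{A}_j\rangle_F. \nonumber
\end{align}
Then I would bound each summand by the Cauchy--Schwarz inequality for this inner product, $\langle \mb{A}_i,\mb{A}_j\rangle_F \le \norm{\mb{A}_i}_F\,\norm{\mb{A}_j}_F$, and sum over all ordered pairs $(i,j)$, which yields $\norm{\sum_{i=1}^n \mb{A}_i}_F^2 \le \sum_{i=1}^n\sum_{j=1}^n \norm{\mb{A}_i}_F\,\norm{\mb{A}_j}_F = \big(\sum_{i=1}^n \norm{\mb{A}_i}_F\big)^2$, which is the claimed bound. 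An equally short route is to invoke the triangle inequality for the norm $\norm{\cdot}_F$, i.e.\ $\norm{\sum_{i=1}^n \mb{A}_i}_F \le \sum_{i=1}^n \norm{\mb{A}_i}_F$, and then square both sides, expanding the square of the finite sum as the double sum $\sum_{i=1}^n\sum_{j=1}^n \norm{\mb{A}_i}_F\,\norm{\mb{A}_j}_F$.

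Since this is standard matrix analysis, I do not anticipate any real obstacle; the only point worth being careful about is that the double sum on the right-hand side runs over \emph{both} indices $i$ and $j$, so the correct summand is $\norm{\mb{A}_i}_F\,\norm{\mb{A}_j}_F$ rather than $\norm{\mb{A}_i}_F\,\norm{\mb{A}_i}_F$. Downstream, this fact will be combined with the identity $\sum_{i=1}^N\norm{\mb{x}_i^{(t)} - \bar{\mb{x}}^{(t)}}^2 = \norm{\mb{X}^{(t)}(\mb{I}-\mb{Q})}_F^2$ and an unrolling of the decentralized recursion \eqref{eq:prsgd-avg-decentralized} to bound the consensus error $\sum_{t=0}^{T-1}\tfrac1N\sum_{i=1}^N\mbb{E}[\norm{\bar{\mb{x}}^{(t)}-\mb{x}_i^{(t)}}^2]$ for Algorithm \ref{alg:prsgd-decentralized}, i.e.\ to produce the analogue of Lemma \ref{lm:diff-avg-per-node} in the decentralized setting.
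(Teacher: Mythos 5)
Your proposal is correct and follows essentially the same route as the paper's own proof: expand $\norm{\sum_i \mb{A}_i}_F^2$ as the double sum of traces $\text{tr}(\mb{A}_i\tran \mb{A}_j)$ (equivalently, Frobenius inner products) and bound each term by Cauchy--Schwarz, $\abs{\text{tr}(\mb{A}_i\tran\mb{A}_j)}\leq\norm{\mb{A}_i}_F\norm{\mb{A}_j}_F$. You are also right that the summand in the stated bound should read $\norm{\mb{A}_i}_F\norm{\mb{A}_j}_F$ rather than $\norm{\mb{A}_i}_F\norm{\mb{A}_i}_F$; this is a typo in the statement, and the paper's own proof concludes with the corrected form.
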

\begin{proof}
Recall by the definition of Frobenius norm, we have $\norm{\mb{A}}_F^2 = \text{tr}(\mb{A}\tran \mb{A})$ where $\text{tr}(\cdot)$ denote the trace operator of a square matrix. It follows that 
\begin{align*}
\norm{\sum_{i=1}^n \mb{A}_i}_F^2 = ~& \text{tr}((\sum_{i=1}^n \mb{A}_i)\tran (\sum_{j=1}^n \mb{A}_j)) \\
=~& \sum_{i=1}^n \sum_{j=1}^n \text{tr}(\mb{A}_i\tran \mb{A}_j) \\
\overset{(a)}{\leq}& \sum_{i=1}^n \sum_{j=1}^n  \norm{\mb{A}_i}_F \norm{\mb{A}_j}_F
\end{align*}
where (a) follows from the simple fact that $\abs{\text{tr}(\mb{A}\tran \mb{B})} \leq  \norm{\mb{A}}_F \norm{\mb{B}}_F$ for any two square matrices.
\end{proof}

\begin{Fact}\label{fact:matrix-identity}
Let $\mb{Q}$ be defined in \eqref{eq:def-Q-matrix}. For any symmetric doubly stochastic matrix $\mb{W}$ under Assumption \ref{ass:mix-matrix}, we have 
\begin{enumerate}
\item $\mb{Q}\mb{W} = \mb{W}\mb{Q}$
\item $(\mb{I} - \mb{Q}) \mb{W} = \mb{W}(\mb{I} - \mb{Q})$
\item For any integer $k\geq 1$, we have $\norm{(\mb{I} - \mb{Q})\mb{W}^{k}} \leq \rho^{k/2}$ where $\rho <1$ is the constant in Assumption \ref{ass:mix-matrix} and $\norm{\cdot}$ denotes the spectrum norm for a matrix.
\end{enumerate}
\end{Fact}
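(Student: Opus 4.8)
The plan is to derive all three items from two elementary facts about $\mb{W}$ and $\mb{Q}$: first, that $\mb{W}$ fixes the all-ones vector on both sides, i.e. $\mb{W}\mb{1}_N = \mb{1}_N$ (row sums equal $1$) and $\mb{1}_N\tran\mb{W} = \mb{1}_N\tran$ (column sums equal $1$, which is exactly where double stochasticity of $\mb{W}$ enters, and is also implied by symmetry together with the first identity); and second, that $\mb{Q} = \frac1N\mb{1}_N\mb{1}_N\tran$ is the orthogonal projector onto $\mathrm{span}\{\mb{1}_N\}$, in particular $\mb{Q}^2 = \mb{Q}$ since $\mb{1}_N\tran\mb{1}_N = N$.

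For item 1, I would simply compute $\mb{Q}\mb{W} = \frac1N\mb{1}_N(\mb{1}_N\tran\mb{W}) = \frac1N\mb{1}_N\mb{1}_N\tran = \mb{Q}$ and likewise $\mb{W}\mb{Q} = \frac1N(\mb{W}\mb{1}_N)\mb{1}_N\tran = \mb{Q}$, so that $\mb{Q}\mb{W} = \mb{W}\mb{Q} = \mb{Q}$. Item 2 is then immediate, since both $(\mb{I}-\mb{Q})\mb{W}$ and $\mb{W}(\mb{I}-\mb{Q})$ equal $\mb{W} - \mb{Q}$ by item 1.

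For item 3, I would use $(\mb{I}-\mb{Q})^2 = \mb{I}-\mb{Q}$ together with the commutation relation from item 2 to telescope: $(\mb{I}-\mb{Q})\mb{W}^k = (\mb{I}-\mb{Q})^k\mb{W}^k = \big((\mb{I}-\mb{Q})\mb{W}\big)^k = (\mb{W}-\mb{Q})^k$, where the middle equality uses that $(\mb{I}-\mb{Q})$ and $\mb{W}$ commute so the factors can be regrouped. It remains to bound $\norm{\mb{W}-\mb{Q}}$. Since $\mb{W}$ is symmetric, I would take its spectral decomposition $\mb{W} = \sum_{i=1}^N \lambda_i(\mb{W})\mb{v}_i\mb{v}_i\tran$ with orthonormal $\mb{v}_i$; Assumption \ref{ass:mix-matrix} forces the eigenvalue $1$ to be simple, so $\mb{v}_1 = \mb{1}_N/\sqrt{N}$ and hence $\mb{Q} = \mb{v}_1\mb{v}_1\tran$. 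Therefore $\mb{W}-\mb{Q} = \sum_{i=2}^N \lambda_i(\mb{W})\mb{v}_i\mb{v}_i\tran$ is symmetric with spectral norm $\max_{i\geq 2}\abs{\lambda_i(\mb{W})} = \max\{\abs{\lambda_2(\mb{W})},\abs{\lambda_N(\mb{W})}\} \leq \sqrt{\rho}$, and submultiplicativity of the spectral norm gives $\norm{(\mb{W}-\mb{Q})^k} \leq \norm{\mb{W}-\mb{Q}}^k \leq \rho^{k/2}$. (Alternatively, one can read off $(\mb{I}-\mb{Q})\mb{W}^k = \sum_{i\geq 2}\lambda_i(\mb{W})^k\mb{v}_i\mb{v}_i\tran$ directly from the decomposition and conclude in one line.)

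I do not anticipate a genuine obstacle: this is standard finite-dimensional linear algebra. The only two points needing care are justifying $\mb{1}_N\tran\mb{W} = \mb{1}_N\tran$ (this is precisely the use of double stochasticity, not merely stochasticity), and, in item 3, applying the regrouping $(\mb{I}-\mb{Q})^k\mb{W}^k = \big((\mb{I}-\mb{Q})\mb{W}\big)^k$ in the correct order by invoking the commutation relation rather than assuming it implicitly.
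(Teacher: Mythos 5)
Your proof is correct, and it takes a mildly different route from the paper's. For items 1 and 2 the paper diagonalizes both $\mb{W}$ and $\mb{Q}$ in a common orthonormal eigenbasis and lets the diagonal factors commute, whereas you observe directly from double stochasticity that $\mb{1}_N\tran\mb{W} = \mb{1}_N\tran$ and $\mb{W}\mb{1}_N = \mb{1}_N$, hence $\mb{Q}\mb{W} = \mb{W}\mb{Q} = \mb{Q}$ --- a more elementary argument that in fact yields the stronger identity that both products equal $\mb{Q}$ itself. For item 3 the paper writes $(\mb{I}-\mb{Q})\mb{W}^k = \mb{P}\tran(\mb{I}-\bs{\Gamma})\bs{\Lambda}^k\mb{P}$ and reads the bound off the diagonal $\mathrm{Diag}\{0,\lambda_2^k,\ldots,\lambda_N^k\}$; you instead use idempotence of $\mb{I}-\mb{Q}$ and the commutation relation to telescope $(\mb{I}-\mb{Q})\mb{W}^k = (\mb{W}-\mb{Q})^k$ and then apply submultiplicativity of the spectral norm to $\norm{\mb{W}-\mb{Q}}\leq\sqrt{\rho}$. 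Both arguments ultimately rest on the same key fact --- that the eigenvalue $1$ is simple with eigenvector $\mb{1}_N/\sqrt{N}$, so that $\mb{Q}$ is exactly the spectral projector onto that eigenspace --- and you correctly justify this from the assumption $\max\{\abs{\lambda_2(\mb{W})},\abs{\lambda_N(\mb{W})}\}\leq\sqrt{\rho}<1$. Your version buys a cleaner derivation of the commutation identities without any decomposition, at the cost of an extra (harmless) appeal to submultiplicativity; the alternative you mention in parentheses is essentially the paper's proof.
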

\begin{proof}
Recall that $\mb{W}$ is a symmetric matrix.  Let $\mb{W} = \mb{P}\tran\bs{\Lambda}\mb{P}$ be the eigenvalue decomposition for $\mb{W}$ where $\bs{\Lambda} = \text{Diag}\{\lambda_1(\mb{W}), \ldots, \lambda_N(\mb{W})\}$ and $\mb{P}$ is a unitary matrix where each column is an eigenvector of $\mb{W}$.  Since $\frac{1}{\sqrt{N}}\mb{1}_N$ is the eigenvector corresponding to eigenvalue $\lambda_1(\mb{W})=1$ for matrix $\mb{W}$, thus the first column of $\mb{P}$ is $\frac{1}{
\sqrt{N}}\mb{1}_N$. By \eqref{eq:def-Q-matrix}, we further have an eigenvalue decomposition of $\mb{Q}$ given by $\mb{Q} = \mb{P}\bs{\Gamma}\mb{P}\tran$ where $\bs{\Gamma} = \text{Diag}\{1, 0, \ldots, 0\}$. Thus, we have 

\begin{enumerate}
\item $\mb{Q}\mb{W}  = \mb{P}\tran\bs{\Gamma}\mb{P}  \mb{P}\tran\bs{\Lambda}\mb{P} = \mb{P}\tran\bs{\Gamma}\bs{\Lambda}\mb{P} = \mb{P}\tran\bs{\Lambda}\bs{\Gamma}\mb{P} = \mb{P}\tran\bs{\Lambda}\mb{P}\mb{P}\tran\bs{\Gamma}\mb{P} = \mb{W}\mb{Q}$
\item $(\mb{I} - \mb{Q}) \mb{W} = \mb{W} - \mb{Q}\mb{W} \overset{(a)}{=} \mb{W} - \mb{W}\mb{Q} =\mb{W}(\mb{I} - \mb{Q})$ where (a) follows from part (1) of this fact.
\item Note that $(\mb{I} - \mb{Q})\mb{W}^{k} =  \mb{P}\tran(\mb{I}-\bs{\Gamma})\mb{P} (\mb{P}\tran\bs{\Lambda}\mb{P})^k= \mb{P}\tran (\mb{I}-\bs{\Gamma})\bs{\Lambda}^k \mb{P}$ where $(\mb{I}-\bs{\Gamma}) \bs{\Lambda}^k= \text{Diag}\{0, (\lambda_2(\mb{W}))^k, \ldots, (\lambda_N(\mb{W}))^k\}$. By Assumption \ref{ass:mix-matrix}, we have $\max\{\abs{(\lambda_2(\mb{W}))^k}, \abs{(\lambda_N(\mb{W}))^k}\} \leq (\sqrt{\rho})^k$. 
\end{enumerate}
\end{proof}

\begin{Lem} \label{lm:x-bar-x-diff-decentralized}
	Consider problem \eqref{eq:sto-opt} under Assumptions \ref{ass:basic} and \ref{ass:mix-matrix}.  Let $\{\bar{\mb{x}}^{(t)}\}_{t\geq 0}$ defined in \eqref{eq:def-x-bar} be the node averages of local solutions from Algorithm \ref{alg:prsgd-decentralized} with Option I.  If $\gamma$ is chosen to satisfy $\frac{16L^2\gamma^2}{(1-\beta)^2(1-\sqrt{\rho})^2} < 1$, then for all $T\geq 1$, we have 
	\begin{align*}
	&\sum_{t=0}^{T-1}\frac{1}{N}\sum_{i=1}^{N}\mbb{E}[\norm{\bar{\mb{x}}^{(t)}  - \mb{x}_i^{(t)}}^2] \\
	\leq &\frac{1}{1- \frac{16L^2\gamma^2}{(1-\beta)^2(1-\sqrt{\rho})^2}}\frac{2\gamma^2\sigma^2}{(1-\beta)^2(1-\rho)}T + \frac{1}{1- \frac{16L^2\gamma^2}{(1-\beta)^2(1-\sqrt{\rho})^2}} \frac{8\gamma^2}{(1-\beta)^2(1-\sqrt{\rho})^2}\sum_{t=0}^{T-1}\mbb{E}[\norm{\frac{1}{N} \sum_{i=1}^N \nabla f_i (\mb{x}_i^{(t)})}^2] \nonumber\\ &+ \frac{1}{1- \frac{16L^2\gamma^2}{(1-\beta)^2(1-\sqrt{\rho})^2}}  \frac{2\gamma^2\kappa^2}{(1-\beta)^2(1-\sqrt{\rho})^2}T
	\end{align*}
\end{Lem}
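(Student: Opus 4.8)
The plan is to mimic the proof of Lemma~\ref{lm:diff-avg-per-node} but carried out in the matrix notation of \eqref{eq:def-matrix-var}, using the spectral contraction of the mixing matrix (Fact~\ref{fact:matrix-identity}) in place of the ``restart every $I$ steps'' argument. First I would write Algorithm~\ref{alg:prsgd-decentralized} with Option~I in matrix form: with $\mb{Q}$ as in \eqref{eq:def-Q-matrix}, the updates \eqref{eq:decentralized-polyak} and \eqref{eq:prsgd-avg-decentralized} read $\mb{U}^{(t)}=(\beta\mb{U}^{(t-1)}+\mb{G}^{(t-1)})\mb{W}$ and $\mb{X}^{(t)}=\mb{X}^{(t-1)}\mb{W}-\gamma\mb{U}^{(t)}$, and the quantity of interest is $\sum_{i=1}^N\norm{\mb{x}_i^{(t)}-\bar{\mb{x}}^{(t)}}^2=\norm{\mb{X}^{(t)}(\mb{I}-\mb{Q})}_F^2$. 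Since $\mb{U}^{(0)}=\mb{0}$ and $\mb{X}^{(0)}(\mb{I}-\mb{Q})=\mb{0}$ (identical initialization), and since $\mb{W}(\mb{I}-\mb{Q})=(\mb{I}-\mb{Q})\mb{W}$ and $(\mb{I}-\mb{Q})^2=\mb{I}-\mb{Q}$ by Fact~\ref{fact:matrix-identity}, unrolling both recursions yields the closed form
\begin{align}
\mb{X}^{(t)}(\mb{I}-\mb{Q}) = -\gamma\sum_{s=0}^{t-1}\frac{1-\beta^{t-s}}{1-\beta}\,\mb{G}^{(s)}(\mb{I}-\mb{Q})\mb{W}^{t-s}, \nonumber
\end{align}
the analogue of \eqref{eq:pf-diff-avg-per-node-eq1}--\eqref{eq:pf-diff-avg-per-node-eq2}; note that the coefficient $c_s\defeq\frac{1-\beta^{t-s}}{1-\beta}\le\frac{1}{1-\beta}$ is exactly the one appearing there.

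Next I would split $\mb{G}^{(s)}=(\mb{G}^{(s)}-\nabla\mb{F}^{(s)})+\nabla\mb{F}^{(s)}$, where $\nabla\mb{F}^{(s)}\defeq[\nabla f_1(\mb{x}_1^{(s)}),\dots,\nabla f_N(\mb{x}_N^{(s)})]$, and bound $\norm{\mb{X}^{(t)}(\mb{I}-\mb{Q})}_F^2$ by twice the squared Frobenius norm of each piece, repeatedly invoking $\norm{\mb{A}\mb{B}}_F\le\norm{\mb{A}}_F\norm{\mb{B}}$ together with $\norm{(\mb{I}-\mb{Q})\mb{W}^{k}}\le\rho^{k/2}$ from Fact~\ref{fact:matrix-identity} (keeping $(\mb{I}-\mb{Q})$ adjacent to $\mb{W}^{t-s}$). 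For the \emph{noise} piece $\sum_s c_s(\mb{G}^{(s)}-\nabla\mb{F}^{(s)})(\mb{I}-\mb{Q})\mb{W}^{t-s}$ the cross terms vanish in expectation because $\mbb{E}[\mb{G}^{(s)}-\nabla\mb{F}^{(s)}\mid\bs{\xi}^{[s-1]}]=\mb{0}$, leaving $\sum_s c_s^2\rho^{t-s}\mbb{E}[\norm{\mb{G}^{(s)}-\nabla\mb{F}^{(s)}}_F^2]\le\frac{N\sigma^2}{(1-\beta)^2}\sum_s\rho^{t-s}$, which after dividing by $N$ and summing a geometric series contributes $O\!\big(\frac{\gamma^2\sigma^2}{(1-\beta)^2(1-\rho)}\big)$ per step. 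For the \emph{gradient} piece $\sum_s c_s\nabla\mb{F}^{(s)}(\mb{I}-\mb{Q})\mb{W}^{t-s}$, I would apply Fact~\ref{fact:frobenius-norm-inequality} followed by the weighted Cauchy--Schwarz inequality with weights $\rho^{(t-s_1)/2}\rho^{(t-s_2)/2}$ and $\sum_s\rho^{(t-s)/2}\le\frac{1}{1-\sqrt\rho}$, obtaining a bound of the form $\frac{1}{(1-\beta)^2(1-\sqrt\rho)}\sum_s\rho^{(t-s)/2}\norm{\nabla\mb{F}^{(s)}}_F^2$; then I bound $\norm{\nabla\mb{F}^{(s)}}_F^2=\sum_i\norm{\nabla f_i(\mb{x}_i^{(s)})}^2\le 2\sum_i\norm{\nabla f_i(\mb{x}_i^{(s)})-\frac{1}{N}\sum_j\nabla f_j(\mb{x}_j^{(s)})}^2+2N\norm{\frac{1}{N}\sum_j\nabla f_j(\mb{x}_j^{(s)})}^2$ and feed the first term into Lemma~\ref{lm:from-smooth-f}, which produces a $6L^2\sum_i\norm{\mb{x}_i^{(s)}-\bar{\mb{x}}^{(s)}}^2$ contribution and a $3N\kappa^2$ contribution.

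Finally I would take expectations, sum over $t\in\{0,1,\dots,T-1\}$, and swap the order of the $t$- and $s$-summations; each geometric tail $\sum_{t>s}\rho^{(t-s)/2}\le\frac{1}{1-\sqrt\rho}$ (respectively $\sum_{t>s}\rho^{t-s}\le\frac{1}{1-\rho}$ for the noise piece) contributes an extra factor, giving the $\frac{2\gamma^2\sigma^2}{(1-\beta)^2(1-\rho)}T$, the $\frac{\gamma^2\kappa^2}{(1-\beta)^2(1-\sqrt\rho)^2}T$, and the $\frac{\gamma^2}{(1-\beta)^2(1-\sqrt\rho)^2}\sum_t\mbb{E}[\norm{\frac{1}{N}\sum_i\nabla f_i(\mb{x}_i^{(t)})}^2]$ terms, plus a term proportional to $\frac{L^2\gamma^2}{(1-\beta)^2(1-\sqrt\rho)^2}\sum_t\frac{1}{N}\sum_i\mbb{E}[\norm{\bar{\mb{x}}^{(t)}-\mb{x}_i^{(t)}}^2]$ arising from the deviation contribution. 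Moving this last, self-referential, term to the left-hand side and dividing by $1-\frac{16L^2\gamma^2}{(1-\beta)^2(1-\sqrt\rho)^2}>0$ — legitimate under the stated smallness condition on $\gamma$ — yields the claim (after, if needed, slightly loosening numerical constants). The hard part will be the bookkeeping of the double unrolling: one must ensure the powers of $\mb{W}$ combine correctly ($\mb{W}^{\tau-s}\mb{W}^{t-\tau}=\mb{W}^{t-s}$) and that $(\mb{I}-\mb{Q})$ is attached to $\mb{W}^{t-s}$ rather than to $\nabla\mb{F}^{(s)}$ so that $\rho^{(t-s)/2}$ is available — it is precisely this choice that forces the $\norm{\frac{1}{N}\sum_i\nabla f_i(\mb{x}_i^{(t)})}^2$ term into the bound (and hence into the hypothesis on $\gamma$ in Theorem~\ref{thm:decentralized-rate}). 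Everything else is a decentralized-SGD analogue of computations already done for Lemma~\ref{lm:diff-avg-per-node}.
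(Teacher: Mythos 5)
Your plan follows the paper's proof essentially step for step: the same matrix-form unrolling of $\mb{X}^{(t)}(\mb{I}-\mb{Q})$ into a weighted sum of $\mb{G}^{(\tau)}(\mb{I}-\mb{Q})\mb{W}^{t-1-\tau}$ (note your closed form has an off-by-one in the power of $\mb{W}$, which should be $t-1-s$ rather than $t-s$), the same noise/gradient split with the martingale argument killing the cross terms of the noise piece, the same use of Facts \ref{fact:frobenius-norm-inequality} and \ref{fact:matrix-identity} with the $\rho^{k/2}$ contraction and the AM--GM symmetrization for the gradient piece, and the same final step of summing over $t$, swapping the summations, and absorbing the self-referential consensus term under the smallness condition on $\gamma$. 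The only substantive deviation is your bound on $\norm{\nabla\mb{F}^{(\tau)}}_F^2$ via a two-way split plus Lemma \ref{lm:from-smooth-f} (yielding coefficients $12L^2$, $6N\kappa^2$, $2N$), whereas the paper uses a four-way split through the matrices $\mb{J}^{(\tau)}$ and $\mb{Q}$ (yielding $8L^2$, $4N\kappa^2$, $4N$); your route therefore recovers the lemma only up to constants and would require strengthening the hypothesis on $\gamma$ (to roughly $24L^2\gamma^2/((1-\beta)^2(1-\sqrt{\rho})^2)<1$) to keep the denominator positive, which is harmless for the downstream theorem but does not reproduce the stated bound verbatim.
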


\begin{proof}
Recall the definitions of $\mb{U}^{(t)}$ and $\widetilde{\mb{U}}^{(t)}$ in \eqref{eq:def-matrix-var}, for all $t\geq 1$, we have 
\begin{align}
\mb{U}^{(t)} \overset{(a)}{=}~& \widetilde{\mb{U}}^{(t)} \mb{W}  \nonumber\\
 \overset{(b)}{=}~& [\beta \mb{U}^{(t-1)} + \mb{G}^{(t-1)}]\mb{W} \label{eq:pf-lm-decentralized-node-diff-eq1}
\end{align}
where (a) follows from the first equation in \eqref{eq:prsgd-avg-decentralized} and (b) follows from the first equation in \eqref{eq:decentralized-polyak}.

Recursively applying the above equation for $t$ times yields
\begin{align}
\mb{U}^{(t)} =~& \beta^t \mb{U}^{(0)} \mb{W}^t + \sum_{\tau=0}^{t-1}  \mb{G}^{(\tau)} (\beta\mb{W})^{t-1-\tau}  \nonumber \\
\overset{(a)}{=}~& \sum_{\tau=0}^{t-1}  \mb{G}^{(\tau)} (\beta\mb{W})^{t-1-\tau}  \label{eq:pf-lm-decentralized-node-diff-eq2}
\end{align}
where (a) follows from $\mb{U}^{(0)}  = \mb{0}$.

Recall the definitions of $\mb{X}^{(t)}$ and  $\widetilde{\mb{U}}^{(t)}$ in \eqref{eq:def-matrix-var}, for all $t\geq 1$, we have
\begin{align}
\mb{X}^{(t)} \overset{(a)}{=}~& [\mb{X}^{(t-1)} - \gamma \widetilde{\mb{U}}^{(t)}] \mb{W}  \nonumber\\
\overset{(b)}{=}~& [\mb{X}^{(t-1)} - \gamma [\beta \mb{U}^{(t-1)}+\mb{G}^{(t-1)}]] \mb{W} \nonumber\\
=~&  \mb{X}^{(t-1)}\mb{W} - \gamma [\beta \mb{U}^{(t-1)}+\mb{G}^{(t-1)}]\mb{W} \nonumber\\
 \overset{(c)}{=}~& \mb{X}^{(t-1)}\mb{W} - \gamma \mb{U}^{(t)}
\end{align}
where (a) follows from the second equation in \eqref{eq:decentralized-polyak}; (b) follows from the first equation in \eqref{eq:decentralized-polyak}; and (c) follows from \eqref{eq:pf-lm-decentralized-node-diff-eq1}. 

Multiplying both sides by $\mb{I} - \mb{Q}$ with $\mb{Q}$ defined in \eqref{eq:def-Q-matrix}  yields
\begin{align}
\mb{X}^{(t)}(\mb{I} - \mb{Q})  =~& \mb{X}^{(t-1)}\mb{W}(\mb{I} - \mb{Q}) - \gamma \mb{U}^{(t)}(\mb{I} - \mb{Q})  \nonumber \\
\overset{(a)}{=}~& \mb{X}^{(t-1)}(\mb{I} - \mb{Q}) \mb{W}- \gamma \mb{U}^{(t)}(\mb{I} - \mb{Q}) \nonumber
\end{align}
where (a) follows because $\mb{W}(\mb{I} - \mb{Q}) = (\mb{I} - \mb{Q}) \mb{W}$ by Fact \ref{fact:matrix-identity}.

For all $t\geq 1$, recursively applying the above equation for $t$ times (using $\mb{W}(\mb{I} - \mb{Q}) = (\mb{I} - \mb{Q})\mb{W}$ when needed) yields
\begin{align}
\mb{X}^{(t)}(\mb{I} - \mb{Q}) =~& \mb{X}^{(0)}(\mb{I} - \mb{Q}) \mb{W}^t -\gamma \sum_{\tau=1}^{t} \mb{U}^{(\tau)}(\mb{I} - \mb{Q}) \mb{W}^{t-\tau} \nonumber\\
\overset{(a)}{=}~&-\gamma \sum_{\tau=1}^{t} \mb{U}^{(\tau)}(\mb{I} - \mb{Q}) \mb{W}^{t-\tau} \nonumber\\
\overset{(b)}{=}~&-\gamma  \sum_{\tau=1}^{t} \sum_{j=0}^{\tau-1}  \mb{G}^{(j)} (\beta\mb{W})^{\tau-1-j} (\mb{I} - \mb{Q}) \mb{W}^{t-\tau} \nonumber\\
\overset{(c)}{=}~& -\gamma  \sum_{\tau=1}^{t} \sum_{j=0}^{\tau-1}  \mb{G}^{(j)} \beta^{\tau-1-j}\mb{W}^{t-1-j} (\mb{I} - \mb{Q}) \nonumber\\
=~& -\gamma \sum_{k=0}^{t-1} \mb{G}^{(k)} [\sum_{j=k+1}^{t} \beta^{j-1-k}] \mb{W}^{t-1-k} (\mb{I}-\mb{Q}) \nonumber\\
=~& -\gamma \sum_{\tau=0}^{t-1} \frac{1-\beta^{t-\tau}}{1-\beta}\mb{G}^{(\tau)} (\mb{I}-\mb{Q}) \mb{W}^{t-1-\tau}  \label{eq:pf-lm-decentralized-node-diff-eq3}
\end{align}
where (a) follows because $\mb{X}^{(0)}(\mb{I} - \mb{Q})  = \mb{0}$ due to all columns of $\mb{X}^{(0)}$ are identical; (b) follows by substituting \eqref{eq:pf-lm-decentralized-node-diff-eq2}; (c) follows because $\mb{W}(\mb{I} - \mb{Q})= (\mb{I} - \mb{Q}) \mb{W}$ by Fact \ref{fact:matrix-identity}.

For all $t\geq 0$, denote
\begin{align}
\mb{H}^{(t)} \defeq \big[\nabla f_1(\mb{x}_1^{(t)}), \nabla f_2(\mb{x}_2^{(t)}), \ldots, \nabla f_N(\mb{x}_N^{(t)})\big] 
\end{align} 

From \eqref{eq:pf-lm-decentralized-node-diff-eq3}, for all $t\geq1$, we have
\begin{align}
&\mbb{E}[\norm{\mb{X}^{(t)}(\mb{I} - \mb{Q})}_F^2] \nonumber \\
=~& \gamma^2 \mbb{E}[\norm{\sum_{\tau=0}^{t-1} \frac{1-\beta^{t-\tau}}{1-\beta}\mb{G}^{(\tau)}(\mb{I}-\mb{Q})\mb{W}^{t-1-\tau} }_F^2] \nonumber\\
\overset{(a)}{\leq} & 2\gamma^2 \mbb{E}[\norm{\sum_{\tau=0}^{t-1} \frac{1-\beta^{t-\tau}}{1-\beta}(\mb{G}^{(\tau)}-\mb{H}^{(\tau)})(\mb{I}-\mb{Q})\mb{W}^{t-1-\tau} }_F^2] + 2\gamma^2 \mbb{E}[\norm{\sum_{\tau=0}^{t-1} \frac{1-\beta^{t-\tau}}{1-\beta}\mb{H}^{(\tau)}(\mb{I}-\mb{Q})\mb{W}^{t-1-\tau} }_F^2] \label{eq:pf-lm-decentralized-node-diff-eq4}
\end{align}
where (a) follows from the basic inequality $\norm{\mb{A}_1+\mb{A}_2}_F^2 \leq 2 \norm{\mb{A}_1}_F^2 + 2 \norm{\mb{A}_2}_F^2$.

Now we develop the respective upper bounds of the two terms on the right side of \eqref{eq:pf-lm-decentralized-node-diff-eq4}. We note that
\begin{align}
&\mbb{E}[\norm{\sum_{\tau=0}^{t-1} \frac{1-\beta^{t-\tau}}{1-\beta}(\mb{G}^{(\tau)}-\mb{H}^{(\tau)}) (\mb{I}-\mb{Q})\mb{W}^{t-1-\tau}}_F^2] \nonumber\\
\overset{(a)}{=}& \sum_{\tau=0}^{t-1} \mbb{E}[\norm{ \frac{1-\beta^{t-\tau}}{1-\beta}(\mb{G}^{(\tau)}-\mb{H}^{(\tau)})(\mb{I}-\mb{Q})\mb{W}^{t-1-\tau} }_F^2] \nonumber \\
\overset{(b)}{\leq}& \frac{1}{(1-\beta)^2} \sum_{\tau=0}^{t-1} \rho^{t-1-\tau}\mbb{E}[\norm{\mb{G}^{(\tau)}-\mb{H}^{(\tau)}}_F^2] \nonumber\\
\overset{(c)}{\leq}& \frac{1}{(1-\beta)^2} \sum_{\tau=0}^{t-1} \rho^{t-1-\tau} N\sigma^2  \nonumber \\
\overset{(d)}{\leq}& \frac{N\sigma^2}{(1-\beta)^2(1-\rho)}   \label{eq:pf-lm-decentralized-node-diff-eq5}
\end{align}
where (a) follows because $\mbb{E}[\mb{G}^{(\tau_2)}-\mb{H}^{(\tau_2)} | \mb{G}^{(\tau_1)}-\mb{H}^{(\tau_1)}] = \mb{0}$ for any $\tau_2 > \tau_1$; (b) follows because $\abs{ \frac{1-\beta^{t-\tau}}{1-\beta}} \leq \frac{1}{1-\beta}$, $\norm{\mb{A}\mb{B}}_F \leq \norm{\mb{A}}_F \norm{\mb{B}}$ for any two matrices $\mb{A}, \mb{B}$ and $\norm{(\mb{I}-\mb{Q})\mb{W}^{t-1-\tau} } \leq \rho^{(t-1-\tau)/2}$ by Fact  \ref{fact:matrix-identity}; and (c) follows by noting $\mbb{E}[\norm{\mb{G}^{(\tau)}-\mb{H}^{(\tau)}}_F^2] = \sum_{i=1}^N\mbb{E}[\norm{\mb{g}_i^{(\tau)} - \nabla f_i (\mb{x}_i^{(\tau)})}^2] \leq N\sigma^2$ where the last inequality follows by Assumption \ref{ass:basic}; and (d) follows because $\sum_{\tau=0}^{t-1}\rho^{t-1-\tau} =\frac{1-\rho^t}{1-\rho} \leq \frac{1}{1-\rho}$.

We also note that  
\begin{align}
&\mbb{E}[\norm{\sum_{\tau=0}^{t-1} \frac{1-\beta^{t-\tau}}{1-\beta}\mb{H}^{(\tau)}(\mb{I}-\mb{Q})\mb{W}^{t-1-\tau} }_F^2] \nonumber\\
\overset{(a)}{\leq}~ & \sum_{\tau=0}^{t-1} \sum_{\tau^\prime = 0}^{t-1} \mbb{E}[\norm{ \frac{1-\beta^{t-\tau}}{1-\beta}\mb{H}^{(\tau)}(\mb{I}-\mb{Q})\mb{W}^{t-1-\tau}}_F \norm{ \frac{1-\beta^{t-\tau}}{1-\beta}\mb{H}^{(\tau^\prime)}(\mb{I}-\mb{Q})\mb{W}^{t-1-\tau^\prime} }_F] \nonumber\\
\overset{(b)}{\leq}~& \frac{1}{(1-\beta)^2} \sum_{\tau=0}^{t-1} \sum_{\tau^\prime = 0}^{t-1} \rho^{t-1-(\tau + \tau^\prime)/2} \mbb{E}[\norm{\mb{H}^{(\tau)}}_F \norm{\mb{H}^{(\tau^\prime)}}_F] \nonumber\\
\overset{(c)}{\leq}~& \frac{1}{(1-\beta)^2} \sum_{\tau=0}^{t-1} \sum_{\tau^\prime = 0}^{t-1} \rho^{t-1-(\tau + \tau^\prime)/2} \big(\frac{1}{2}\mbb{E}[\norm{\mb{H}^{(\tau)}}_F^2] + \frac{1}{2} \mbb{E}[\norm{\mb{H}^{(\tau^\prime)}}_F^2]\big) \nonumber\\
\overset{(d)}{=}~& \frac{1}{(1-\beta)^2} \sum_{\tau=0}^{t-1} \sum_{\tau^\prime = 0}^{t-1} \rho^{t-1-(\tau + \tau^\prime)/2}\mbb{E}[\norm{\mb{H}^{(\tau)}}_F^2] \nonumber\\
\overset{(e)}{\leq}~& \frac{1}{(1-\beta)^2(1-\sqrt{\rho})} \sum_{\tau=0}^{t-1}  \rho^{(t-1-\tau)/2} \mbb{E}[\norm{\mb{H}^{(\tau)}}_F^2]   \label{eq:pf-lm-decentralized-node-diff-eq6}
\end{align}
where (a) follows from Fact \ref{fact:frobenius-norm-inequality} ; (b) follows because $\abs{ \frac{1-\beta^{t-\tau}}{1-\beta}} \leq \frac{1}{1-\beta}$, $\norm{\mb{A}\mb{B}}_F \leq \norm{\mb{A}}_F \norm{\mb{B}}$ for any two matrices $\mb{A}, \mb{B}$ and $\norm{(\mb{I}-\mb{Q})\mb{W}^{t-1-\tau} } \leq \rho^{(t-1-\tau)/2}$ by Fact  \ref{fact:matrix-identity}; (c) follows from the basic identity $xy \leq \frac{1}{2}x^2 + \frac{1}{2}y^2$ for any two real number $x,y$; (d) follows by noting the symmetry between $\tau$ and $\tau^\prime$ in the expression from last line; (e) follows because $\sum_{\tau^\prime = 0}^{t-1}\rho^{t-1-(\tau + \tau^\prime)/2} \leq \frac{\rho^{(t-1-\tau)/2}}{1-\sqrt{\rho}}$.

For all $t\geq 0$, we denote
\begin{align}
\mb{J}^{(t)} \defeq \big[\nabla f_1(\bar{\mb{x}}^{(t)}), \nabla f_2(\bar{\mb{x}}^{(t)}), \ldots, \nabla f_N(\bar{\mb{x}}^{(t)})\big] 
\end{align} 

Then, for all $\tau\geq 0$, we have  
\begin{align}
&\mbb{E}[\norm{\mb{H}^{(\tau)}}_F^2]  \nonumber \\
=~&\mbb{E}[\norm{\mb{H}^{(\tau)} -\mb{J}^{(\tau)} +\mb{J}^{(\tau)} -\mb{J}^{(\tau)}\mb{Q} +\mb{J}^{(\tau)}\mb{Q} -\mb{H}^{(\tau)}\mb{Q} +\mb{H}^{(\tau)}\mb{Q}}_F^2] \nonumber\\
\overset{(a)}{\leq}~& 4\mbb{E}[\norm{ \mb{H}^{(\tau)} -\mb{J}^{(\tau)}}_F^2] + 
 4 \mbb{E}[\norm{ \mb{J}^{(\tau)} -\mb{J}^{(\tau)}\mb{Q}}_F^2] +4\mbb{E}[\norm{(\mb{J}^{(\tau)} -\mb{H}^{(\tau)})\mb{Q}}_F^2]  + 4 \mbb{E}[\norm{ \mb{H}^{(\tau)}\mb{Q}}_F^2]  \nonumber \\
 \overset{(b)}{\leq}~& 4L^2\mbb{E}[\norm{ \mb{X}^{(\tau)}(\mb{I} - \mb{Q})}_F^2]  + 4N\kappa^2 + 4L^2\mbb{E}[\norm{ \mb{X}^{(\tau)}(\mb{I} - \mb{Q})}_F^2] + 4N  \mbb{E}[\norm{\frac{1}{N} \sum_{i=1}^N \nabla f_i (\mb{x}_i^{(\tau)})}^2] \nonumber \\
 =~& 8L^2\mbb{E}[\norm{ \mb{X}^{(\tau)}(\mb{I} - \mb{Q})}_F^2]  + 4N\kappa^2 + 4N \mbb{E}[\norm{\frac{1}{N} \sum_{i=1}^N \nabla f_i (\mb{x}_i^{(\tau)})}^2]   \label{eq:pf-lm-decentralized-node-diff-eq7}
\end{align}
where (a) follows by applying the basic inequality $\norm{\sum_{i=1}^n\mb{A}_i}_F^2 \leq n\sum_{i=1}^n \norm{\mb{A}_i}_F^2$ for matrices of the same dimension with $n=4$; (b) follows because $\norm{ \mb{H}^{(\tau)} -\mb{J}^{(\tau)}}_F^2 = \sum_{i=1}^N \norm{\nabla f_i(\mb{x}_i^{(\tau)}) - \nabla f_i(\bar{\mb{x}}^{(\tau)})}^2 \leq L^2 \sum_{i=1}^N \norm{\mb{x}_i^{(\tau)} -\bar{\mb{x}}^{(\tau)} }^2 = L^2 \norm{ \mb{X}^{(\tau)}(\mb{I} - \mb{Q})}_F^2$ where the inequality step follows from the smoothness of each $f_i(\cdot)$ by Assumption \ref{ass:basic}, $\norm{\mb{J}^{(\tau)} -\mb{J}^{(\tau)}\mb{Q}}_F^2 = \sum_{i=1}^N \norm{\nabla f_i(\bar{\mb{x}}^{(\tau)}) - \nabla f(\bar{\mb{x}}^{(\tau)}) }^2 \leq N\kappa^2$ where the second step follows from Assumption \ref{ass:basic}, $\norm{(\mb{J}^{(\tau)} -\mb{H}^{(\tau)})\mb{Q}}_F^2 \leq \norm{ \mb{H}^{(\tau)} -\mb{J}^{(\tau)}}_F^2$.

Substituting \eqref{eq:pf-lm-decentralized-node-diff-eq7} into \eqref{eq:pf-lm-decentralized-node-diff-eq6} yields
{\small
\begin{align}
&\mbb{E}[\norm{\sum_{\tau=0}^{t-1} \frac{1-\beta^{t-\tau}}{1-\beta}\mb{H}^{(\tau)}(\mb{I}-\mb{Q})\mb{W}^{t-1-\tau} }_F^2] \nonumber\\
\leq &  \frac{1}{(1-\beta)^2(1-\sqrt{\rho})} \Big[ \Big(8L^2\sum_{\tau=0}^{t-1}  \rho^{(t-1-\tau)/2}  \mbb{E}[\norm{ \mb{X}^{(\tau)}(\mb{I} - \mb{Q})}_F^2]\Big)  +  \Big(4N\sum_{\tau=0}^{t-1}  \rho^{(t-1-\tau)/2} \mbb{E}[\norm{\frac{1}{N} \sum_{i=1}^N \nabla f_i (\mb{x}_i^{(\tau)})}^2] \Big) \nonumber\\ &\qquad \qquad \qquad \qquad+ N\kappa^2 \sum_{\tau=0}^{t-1}  \rho^{(t-1-\tau)/2}  \Big]  \nonumber\\
\leq & \frac{1}{(1-\beta)^2(1-\sqrt{\rho})} \Big[ \Big(8L^2\sum_{\tau=0}^{t-1}  \rho^{(t-1-\tau)/2}  \mbb{E}[\norm{ \mb{X}^{(\tau)}(\mb{I} - \mb{Q})}_F^2]\Big)  +  \Big(4N\sum_{\tau=0}^{t-1}  \rho^{(t-1-\tau)/2} \mbb{E}[\norm{\frac{1}{N} \sum_{i=1}^N \nabla f_i (\mb{x}_i^{(\tau)})}^2] \Big) \nonumber\\ &\qquad \qquad \qquad \qquad+ N\kappa^2 \frac{1}{1-\sqrt{\rho}}  \Big] \label{eq:pf-lm-decentralized-node-diff-eq8}
\end{align}
}%

Substituting \eqref{eq:pf-lm-decentralized-node-diff-eq5} and \eqref{eq:pf-lm-decentralized-node-diff-eq8} into \eqref{eq:pf-lm-decentralized-node-diff-eq4} yields
{\small
\begin{align*}
&\mbb{E}[\norm{\mb{X}^{(t)}(\mb{I} - \mb{Q})}_F^2] \nonumber \\
\leq & \frac{2N\gamma^2\sigma^2}{(1-\beta)^2(1-\rho)}  +  \frac{2\gamma^2}{(1-\beta)^2(1-\sqrt{\rho})} \Big[ \Big(8L^2\sum_{\tau=0}^{t-1}  \rho^{(t-1-\tau)/2}  \mbb{E}[\norm{ \mb{X}^{(\tau)}(\mb{I} - \mb{Q})}_F^2]\Big)  +  \Big(4N\sum_{\tau=0}^{t-1}  \rho^{(t-1-\tau)/2} \mbb{E}[\norm{\frac{1}{N} \sum_{i=1}^N \nabla f_i (\mb{x}_i^{(\tau)})}^2] \Big) \Big]  \nonumber\\
& +    \frac{2N\gamma^2\kappa^2}{(1-\beta)^2(1-\sqrt{\rho})^2} 
\end{align*}
}%

Summing over $t\in\{1,2,\ldots, T-1\}$ and noting that $\norm{\mb{X}^{(0)}(\mb{I} - \mb{Q})}_F^2 = 0$ yields
\begin{align}
&\sum_{t=0}^{T-1}\mbb{E}[\norm{\mb{X}^{(t)}(\mb{I} - \mb{Q})}_F^2] \nonumber \\
\leq & \frac{2N\gamma^2\sigma^2}{(1-\beta)^2(1-\rho)}T  +  \frac{16L^2\gamma^2}{(1-\beta)^2(1-\sqrt{\rho})} \sum_{t=1}^{T-1}\sum_{\tau=0}^{t-1}  \rho^{(t-1-\tau)/2}  \mbb{E}[\norm{ \mb{X}^{(\tau)}(\mb{I} - \mb{Q})}_F^2]  \nonumber \\ &+  \frac{8N\gamma^2}{(1-\beta)^2(1-\sqrt{\rho})} \sum_{t=1}^{T-1}  \sum_{\tau=0}^{t-1}  \rho^{(t-1-\tau)/2} \mbb{E}[\norm{\frac{1}{N} \sum_{i=1}^N \nabla f_i (\mb{x}_i^{(\tau)})}^2]  +    \frac{2N\gamma^2\kappa^2}{(1-\beta)^2(1-\sqrt{\rho})^2}T \nonumber\\
\overset{(a)}{\leq}&\frac{2N\gamma^2\sigma^2}{(1-\beta)^2(1-\rho)}T + \frac{16L^2\gamma^2}{(1-\beta)^2(1-\sqrt{\rho})} \sum_{t=0}^{T-1}\frac{1-\rho^{(T-1-t)/2}}{1-\sqrt{\rho}} \mbb{E}[\norm{ \mb{X}^{(t)}(\mb{I} - \mb{Q})}_F^2]  \nonumber\\ &+  \frac{8N\gamma^2}{(1-\beta)^2(1-\sqrt{\rho})}\sum_{t=0}^{T-1}\frac{1-\rho^{(T-1-t)/2}}{1-\sqrt{\rho}} \mbb{E}[\norm{\frac{1}{N} \sum_{i=1}^N \nabla f_i (\mb{x}_i^{(t)})}^2]  +   \frac{2N\gamma^2\kappa^2}{(1-\beta)^2(1-\sqrt{\rho})^2}T \nonumber\\
\leq &\frac{2N\gamma^2\sigma^2}{(1-\beta)^2(1-\rho)}T + \frac{16L^2\gamma^2}{(1-\beta)^2(1-\sqrt{\rho})^2} \sum_{t=0}^{T-1} \mbb{E}[\norm{ \mb{X}^{(t)}(\mb{I} - \mb{Q})}_F^2]  \nonumber\\ &+  \frac{8N\gamma^2}{(1-\beta)^2(1-\sqrt{\rho})^2}\sum_{t=0}^{T-1}\mbb{E}[\norm{\frac{1}{N} \sum_{i=1}^N \nabla f_i (\mb{x}_i^{(t)})}^2]  +   \frac{2N\gamma^2\kappa^2}{(1-\beta)^2(1-\sqrt{\rho})^2}T \nonumber
\end{align}
where (a) follows after simplifying the double summations (by swapping the order of two summations and collecting coefficients for common terms)

Rearranging terms and dividing both sides by $N(1- \frac{16L^2\gamma^2}{(1-\beta)^2(1-\sqrt{\rho})^2})$ yields

\begin{align*}
&\sum_{t=0}^{T-1}\frac{1}{N}\mbb{E}[\norm{\mb{X}^{(t)}(\mb{I} - \mb{Q})}_F^2] \nonumber \\
\leq &\frac{1}{1- \frac{16L^2\gamma^2}{(1-\beta)^2(1-\sqrt{\rho})^2}}\frac{2\gamma^2\sigma^2}{(1-\beta)^2(1-\rho)}T + \frac{1}{1- \frac{16L^2\gamma^2}{(1-\beta)^2(1-\sqrt{\rho})^2}} \frac{8\gamma^2}{(1-\beta)^2(1-\sqrt{\rho})^2}\sum_{t=0}^{T-1}\mbb{E}[\norm{\frac{1}{N} \sum_{i=1}^N \nabla f_i (\mb{x}_i^{(t)})}^2] \nonumber\\ &+ \frac{1}{1- \frac{16L^2\gamma^2}{(1-\beta)^2(1-\sqrt{\rho})^2}}  \frac{2\gamma^2\kappa^2}{(1-\beta)^2(1-\sqrt{\rho})^2}T
\end{align*}

Finally, this lemma follows by recalling that $\norm{\mb{X}^{(t)}(\mb{I} - \mb{Q})}_F^2 = \sum_{i=1}^{N} \norm{\bar{\mb{x}}^{(t)}  - \mb{x}_i^{(t)}}^2$.
\end{proof}

\begin{Rem}
Comparing Lemma \ref{lm:x-bar-x-diff-decentralized} with Lemma \ref{lm:diff-avg-per-node}, we note that the upper bound in Lemma \ref{lm:x-bar-x-diff-decentralized} includes an additional term involving $\mbb{E}[\norm{\frac{1}{N} \sum_{i=1}^N \nabla f_i (\mb{x}_i^{(t)})}^2]$, which is the cost of asymmetry in local averages.
\end{Rem}

To simplify the lengthy coefficient $\frac{1}{1- \frac{16L^2\gamma^2}{(1-\beta)^2(1-\sqrt{\rho})^2}}$ in Lemma \ref{lm:x-bar-x-diff-decentralized}, we have the following trivial corollary.

\begin{Cor}\label{cor:x-bar-x-diff-decentralized}
	Consider problem \eqref{eq:sto-opt} under Assumptions \ref{ass:basic} and \ref{ass:mix-matrix}.  Let $\{\bar{\mb{x}}^{(t)}\}_{t\geq 0}$ defined in \eqref{eq:def-x-bar} be the node averages of local solutions from Algorithm \ref{alg:prsgd-decentralized} with Option I.  If $\gamma \leq \frac{(1-\beta)(1-\sqrt{\rho})}{4\sqrt{2}L}$, then for all $T\geq 1$, we have 
	\begin{align*}
	&\sum_{t=0}^{T-1}\frac{1}{N}\sum_{i=1}^{N}\mbb{E}[\norm{\bar{\mb{x}}^{(t)}  - \mb{x}_i^{(t)}}^2] \\
	\leq &\frac{4\gamma^2\sigma^2}{(1-\beta)^2(1-\rho)}T +  \frac{16\gamma^2}{(1-\beta)^2(1-\sqrt{\rho})^2}\sum_{t=0}^{T-1}\mbb{E}[\norm{\frac{1}{N} \sum_{i=1}^N \nabla f_i (\mb{x}_i^{(t)})}^2] +   \frac{4\gamma^2\kappa^2}{(1-\beta)^2(1-\sqrt{\rho})^2}T
	\end{align*}
\end{Cor}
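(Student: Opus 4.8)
The plan is to obtain this as an immediate consequence of Lemma~\ref{lm:x-bar-x-diff-decentralized} by showing that the prescribed step size makes the awkward prefactor $\frac{1}{1-\frac{16L^2\gamma^2}{(1-\beta)^2(1-\sqrt{\rho})^2}}$ at most $2$. First I would observe that $\gamma \leq \frac{(1-\beta)(1-\sqrt{\rho})}{4\sqrt{2}L}$ is equivalent to $\gamma^2 \leq \frac{(1-\beta)^2(1-\sqrt{\rho})^2}{32L^2}$, which upon multiplying through by $\frac{16L^2}{(1-\beta)^2(1-\sqrt{\rho})^2}$ gives $\frac{16L^2\gamma^2}{(1-\beta)^2(1-\sqrt{\rho})^2} \leq \frac{1}{2}$.

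This single inequality does two things at once. On the one hand $\frac{1}{2} < 1$, so the standing hypothesis of Lemma~\ref{lm:x-bar-x-diff-decentralized} is met and we may invoke its conclusion. On the other hand it gives $1 - \frac{16L^2\gamma^2}{(1-\beta)^2(1-\sqrt{\rho})^2} \geq \frac{1}{2}$, hence $\frac{1}{1-\frac{16L^2\gamma^2}{(1-\beta)^2(1-\sqrt{\rho})^2}} \leq 2$. Then I would simply bound each of the three occurrences of this prefactor in the statement of Lemma~\ref{lm:x-bar-x-diff-decentralized} by $2$: the $\sigma^2$ term $\frac{2\gamma^2\sigma^2}{(1-\beta)^2(1-\rho)}T$ becomes $\frac{4\gamma^2\sigma^2}{(1-\beta)^2(1-\rho)}T$, the gradient-norm term with coefficient $\frac{8\gamma^2}{(1-\beta)^2(1-\sqrt{\rho})^2}$ picks up coefficient $\frac{16\gamma^2}{(1-\beta)^2(1-\sqrt{\rho})^2}$, and the $\kappa^2$ term $\frac{2\gamma^2\kappa^2}{(1-\beta)^2(1-\sqrt{\rho})^2}T$ becomes $\frac{4\gamma^2\kappa^2}{(1-\beta)^2(1-\sqrt{\rho})^2}T$, which is exactly the claimed bound.

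There is essentially no obstacle here: the corollary is a cosmetic restatement whose only content is the arithmetic check that the factor $4\sqrt{2}$ (rather than, say, $4$) in the denominator of the step-size bound forces the prefactor below $2$. If I wanted a cleaner-looking constant I could instead take any $\gamma \leq c\,\frac{(1-\beta)(1-\sqrt{\rho})}{L}$ with $c$ small enough that $16c^2 \leq \tfrac12$, and the proof would be identical. The one point worth noting explicitly is that the very same step-size bound simultaneously validates the hypothesis of Lemma~\ref{lm:x-bar-x-diff-decentralized} and controls the prefactor, so no additional restriction on $\gamma$ is required.
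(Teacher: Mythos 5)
Your proposal is correct and is exactly the paper's argument: the paper's proof is the one-line observation that $\gamma \leq \frac{(1-\beta)(1-\sqrt{\rho})}{4\sqrt{2}L}$ forces $\frac{16L^2\gamma^2}{(1-\beta)^2(1-\sqrt{\rho})^2} \leq \frac{1}{2}$, so the prefactor in Lemma \ref{lm:x-bar-x-diff-decentralized} is at most $2$, and doubling each coefficient gives the stated bound. Your additional remark that the same step-size condition simultaneously validates the lemma's hypothesis is a correct and worthwhile detail that the paper leaves implicit.
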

\begin{proof}
It follows because $\frac{1}{1- \frac{16L^2\gamma^2}{(1-\beta)^2(1-\sqrt{\rho})^2}} < 2$ under our choice of $\gamma$.
\end{proof}

\subsubsection{Main Proof of Theorem \ref{thm:decentralized-rate}}

Now, we are ready to present the main proof of Theorem \ref{thm:decentralized-rate}. Since Lemmas \ref{lm:z-diff-decentralized} and \ref{lm:z-x-diff-decentralized} under Algorithm \ref{alg:prsgd-decentralized} are respectively identical to Lemmas \ref{lm:z-diff} and \ref{lm:z-x-diff}, repeating the steps from \eqref{eq:pf-thm-rate-eq1} to \eqref{eq:pf-thm-rate-eq6} in Section \ref{sec:main-pf-thm-polyak-rate} yields:

\begin{align}
\mbb{E}[\norm{\nabla f(\bar{\mb{x}}^{(t)})}^2] \leq &\frac{2(1-\beta)}{\gamma} \big(\mbb{E}[f(\bar{\mb{z}}^{(t)})] -\mbb{E}[f(\bar{\mb{z}}^{(t+1)})]  \big) - (1- \frac{L \gamma\beta}{(1-\beta)^2})\mbb{E}[\norm{\frac{1}{N}\sum_{i=1}^N\nabla f_i (\mb{x}_i^{(t)})}^2]  \nonumber\\
&+ \frac{(1-\beta)^2 L}{\beta \gamma} \mbb{E}[\norm{ \bar{\mb{z}}^{(t)} - \bar{\mb{x}}^{(t)} }^2] +  \frac{L\gamma}{(1-\beta)} \mbb{E}[\norm{\frac{1}{N}\sum_{i=1}^N \mb{g}_i^{(t)}}^2] + L^2 \frac{1}{N}\sum_{i=1}^N\mbb{E}[ \Vert \bar{\mb{x}}^{(t)} - \mb{x}_i^{(t)}\Vert^2]  
\end{align}

Summing over $t\in \{0,1,\ldots, T-1\}$  yields

\begin{align}
&\sum_{t=0}^{T-1}\mbb{E}[\norm{\nabla f(\bar{\mb{x}}^{(t)})}^2] \nonumber \\
\leq &\frac{2(1-\beta)}{\gamma } \big(\mbb{E}[f(\bar{\mb{z}}^{(0)})] -\mbb{E}[f(\bar{\mb{z}}^{(T)})]  \big) - \Big(1- \frac{L\gamma \beta}{(1-\beta)^2}\Big) \sum_{t=0}^{T-1}\mbb{E}[\norm{\frac{1}{N}\sum_{i=1}^N\nabla f_i (\mb{x}_i^{(t)})}^2] \nonumber\\
&+ \frac{(1-\beta)^2 L}{\beta \gamma} \sum_{t=0}^{T-1}\mbb{E}[\norm{ \bar{\mb{z}}^{(t)} - \bar{\mb{x}}^{(t)} }^2] + \frac{L\gamma}{(1-\beta)} \sum_{t=0}^{T-1}\mbb{E}[\norm{\frac{1}{N}\sum_{i=1}^N \mb{g}_i^{(t)}}^2] + L^2 \sum_{t=0}^{T-1} \frac{1}{N}\sum_{i=1}^N\mbb{E}[ \Vert \bar{\mb{x}}^{(t)} - \mb{x}_i^{(t)}\Vert^2] \nonumber\\
\overset{(a)}{\leq} &\frac{2(1-\beta)}{\gamma } \big(\mbb{E}[f(\bar{\mb{z}}^{(0)})] -\mbb{E}[f(\bar{\mb{z}}^{(T)})]  \big) - \Big(1- \frac{L\gamma\beta}{(1-\beta)^2}-  \frac{16L^2\gamma^2}{(1-\beta)^2(1-\sqrt{\rho})^2}\Big) \sum_{t=0}^{T-1}\mbb{E}[\norm{\frac{1}{N}\sum_{i=1}^N\nabla f_i (\mb{x}_i^{(t)})}^2]  \nonumber \\ & + \frac{L\gamma}{(1-\beta)^2} \sum_{t=0}^{T-1}\mbb{E}[\norm{\frac{1}{N}\sum_{i=1}^N \mb{g}_i^{(t)}}^2] +\frac{4L^2\gamma^2\sigma^2}{(1-\beta)^2(1-\rho)}T +   \frac{4L^2\gamma^2\kappa^2}{(1-\beta)^2(1-\sqrt{\rho})^2}T\nonumber \\
\overset{(b)}{\leq} &\frac{2(1-\beta)}{\gamma } \big(\mbb{E}[f(\bar{\mb{z}}^{(0)})] -\mbb{E}[f(\bar{\mb{z}}^{(T)})]  \big) - \Big(1- \frac{(1+\beta)L\gamma}{(1-\beta)^2}-  \frac{16L^2\gamma^2}{(1-\beta)^2(1-\sqrt{\rho})^2}\Big) \sum_{t=0}^{T-1}\mbb{E}[\norm{\frac{1}{N}\sum_{i=1}^N\nabla f_i (\mb{x}_i^{(t)})}^2]  \nonumber \\ & + \frac{L\gamma}{(1-\beta)^2}\frac{\sigma^2}{N}T +\frac{4L^2\gamma^2\sigma^2}{(1-\beta)^2(1-\rho)}T +   \frac{4L^2\gamma^2\kappa^2}{(1-\beta)^2(1-\sqrt{\rho})^2}T\nonumber \\
\overset{(c)}{\leq} &\frac{2(1-\beta)}{\gamma } \big(f(\bar{\mb{x}}^{(0)}) -f^\ast]  \big)  + \frac{L\gamma}{(1-\beta)^2}\frac{\sigma^2}{N}T +\frac{4L^2\gamma^2\sigma^2}{(1-\beta)^2(1-\rho)}T +   \frac{4L^2\gamma^2\kappa^2}{(1-\beta)^2(1-\sqrt{\rho})^2}T\nonumber 
\end{align}

where (a) follows by applying Lemma \ref{lm:z-x-diff-decentralized} (noting that $\frac{\beta L\gamma}{(1-\beta)^2} + \frac{L\gamma}{1-\beta} = \frac{L\gamma}{(1-\beta)^2}$) and Corollary \ref{cor:x-bar-x-diff-decentralized} (noting that $\gamma$ satisfies the condition in Corollary \ref{cor:x-bar-x-diff-decentralized});  (b) follows by noting that $\mbb{E}[\norm{\frac{1}{N}\sum_{i=1}^N \mb{g}_i^{(t)}}^2] \leq  \frac{1}{N}\sigma^{2}  +  \mathbb{E} [ \Vert \frac{1}{N} \sum_{i=1}^{N} \nabla f_{i}(\mathbf{x}_{i}^{(t)})\Vert^{2}]$ by Lemma \ref{lm:expected-gradient} ; and (c) follows because $\gamma $ is chosen to ensure $1- \frac{(1+\beta)L\gamma}{(1-\beta)^2}-  \frac{16L^2\gamma^2}{(1-\beta)^2(1-\sqrt{\rho})^2} \geq 0$, $\bar{\mb{z}}^{(0)} = \bar{\mb{x}}^{(0)}$ by definition in \eqref{eq:def-z-decentralized}, and $f^\ast$ is the minimum value of problem \eqref{eq:sto-opt}.

Dividing both sides by $T$ yields

\begin{align}
&\frac{1}{T} \sum_{t=0}^{T-1}\mbb{E}[\norm{\nabla f(\bar{\mb{x}}^{(t)})}^2] \nonumber \\
\leq&\frac{2(1-\beta)}{\gamma T} \big(f(\bar{\mb{x}}^{(0)}) - f^\ast \big) + \frac{L\gamma}{(1-\beta)^2}\frac{\sigma^2}{N} +\frac{4L^2\gamma^2\sigma^2}{(1-\beta)^2(1-\rho)} +   \frac{4L^2\gamma^2\kappa^2}{(1-\beta)^2(1-\sqrt{\rho})^2} \nonumber\\
=& O(\frac{1}{\gamma T})  + O(\frac{\gamma}{N} \sigma^2) + O(\gamma^2 \sigma^2)+ O(\gamma^2 \kappa^2)
\end{align}

\subsection{More Experiments} \label{sec:more-exp}

\subsubsection{Experiments on Algorithm \ref{alg:prsgd-momentum} with Constant Learning Rates}\label{sec:exp-clr}
\begin{figure*}[t!]
	\begin{subfigure}[t]{0.5\textwidth}
		\includegraphics[height=2.3in]{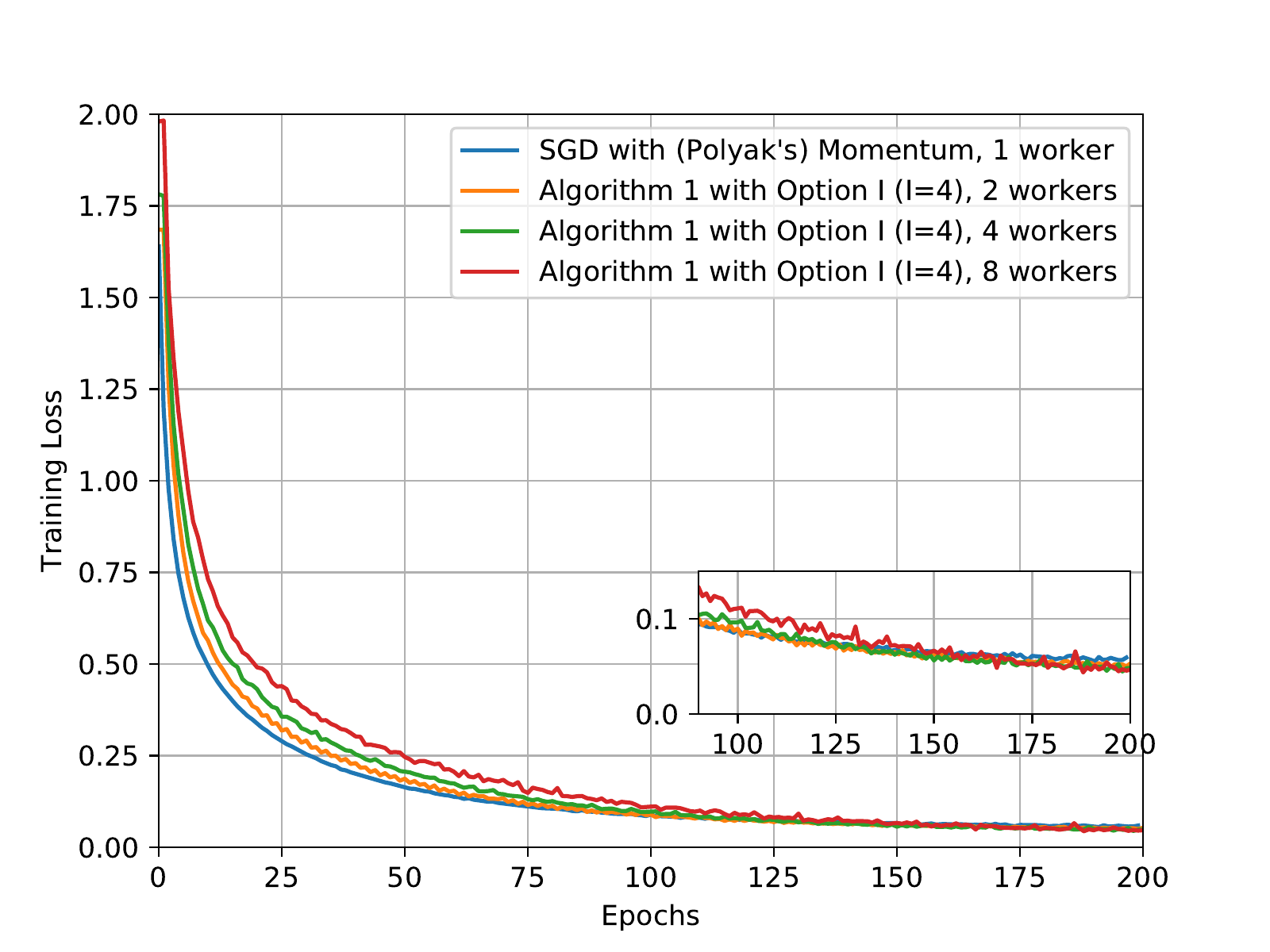}
		\caption{Training loss v.s. epochs. }
	\end{subfigure}
	~
	\begin{subfigure}[t]{0.5\textwidth}
		\includegraphics[height=2.3in]{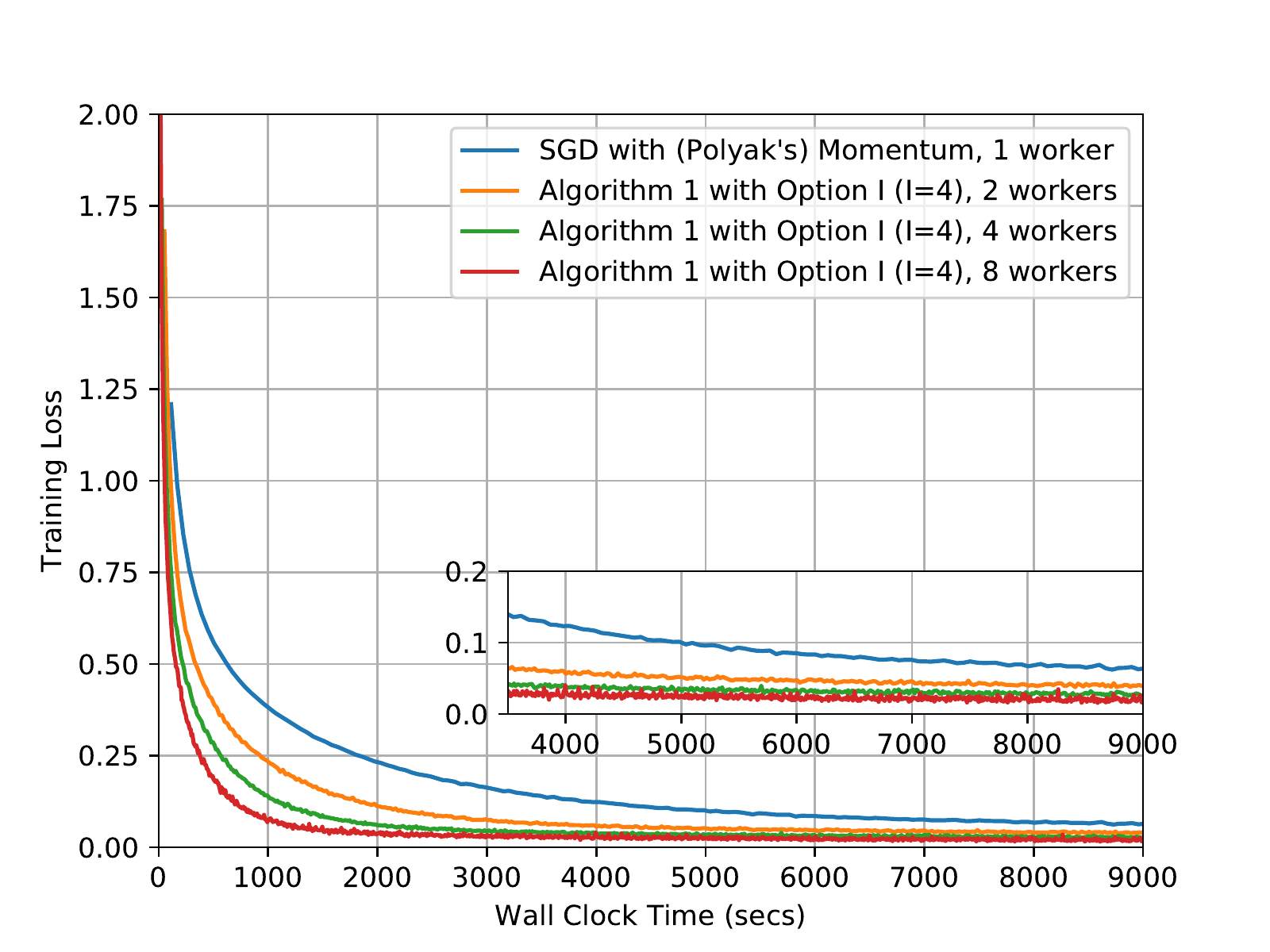}
		\caption{Training loss v.s. wall clock time. }
	\end{subfigure}
	\caption{Algorithm \ref{alg:prsgd-momentum} with constant learning rates: linear speedup verification with ResNet56 over CIFAR10. }
	\label{fig:polyak_clr}
\end{figure*}

Recall our theory, e.g., Corollary \ref{cor:prsgd-momentum-rate}, establishes the linear speedup property of Algorithm \ref{alg:prsgd-momentum} (with $I > 0$) using constant learning rates.  We now verify our theory by training ResNet56 over CIFAR10 using learning rates faithful to our theory. We run SGD with Polyak's momentum using momentum coefficient $0.9$, batch size $64$ and constant learning rate $0.01$.  We also run Algorithm \ref{alg:prsgd-momentum} with Option I and $I=4$ over $N\in\{2,4,8\}$ GPUs. The local batch size at each GPU is $64$ and the learning rate is set to $0.01\sqrt{N}$. Figure \ref{fig:polyak_clr}(a) plots the convergence of training loss in terms of the number of epochs that are jointly accessed by all used GPUs. Figure \ref{fig:polyak_clr}(a) verifies the linear speedup property of our Algorithm \ref{alg:prsgd-momentum} since each GPU in Algorithm \ref{alg:prsgd-momentum} only uses $1/N$ of the number of epochs, which are used by the single worker momentum SGD, to converge to the same loss value.  A more straightforward verification is given in Figure \ref{fig:polyak_clr}(b) where the x-axis is the wall clock time.

\subsubsection{Comparisons between Algorithm \ref{alg:prsgd-momentum} and Existing Model Averaging with Momentum SGD} \label{sec:naive-ma}

Our Algorithm \ref{alg:prsgd-momentum} is quite similar to a common practice, known as ``model averaging", used for training deep neural networks with multiple workers. When the deep neural works are trained with momentum SGD,  Microsoft's CNTK framework \cite{CNTK} and recent work \cite{WangJoshi18ArXiv2} suggest that each worker should additionally clear its local momentum buffer by reseting it to $0$ when they average their local models.  In contrast, our Algorithm \ref{alg:prsgd-momentum} averages both local momentum buffers and local models.  There is no convergence guarantees shown in the literature for the model averaging strategy suggested in \cite{CNTK,WangJoshi18ArXiv2}.  We run both Algorithm \ref{alg:prsgd-momentum} and the model averaging with Polyak's momentum SGD suggested in \cite{CNTK,WangJoshi18ArXiv2}, referred as ``model averaging with cleared momentum" to train ResNet56 over CIFAR10.  In the experiment, both Algorithm \ref{alg:prsgd-momentum} and ``model averaging with cleared momentum" perform one synchronization step, e.g., \eqref{eq:prsgd-restart} in Algorithm \ref{alg:prsgd-momentum},  every $I=16$ iterations.  Figure \ref{fig:naive_ma} shows that Algorithm \ref{alg:prsgd-momentum} converges slightly faster. More impressively, the test accuracy attained by Algorithm \ref{alg:prsgd-momentum} is roughly $1\%$ better.

\begin{figure*}[t!]
\begin{subfigure}[t]{0.5\textwidth}
\includegraphics[height=2.5in]{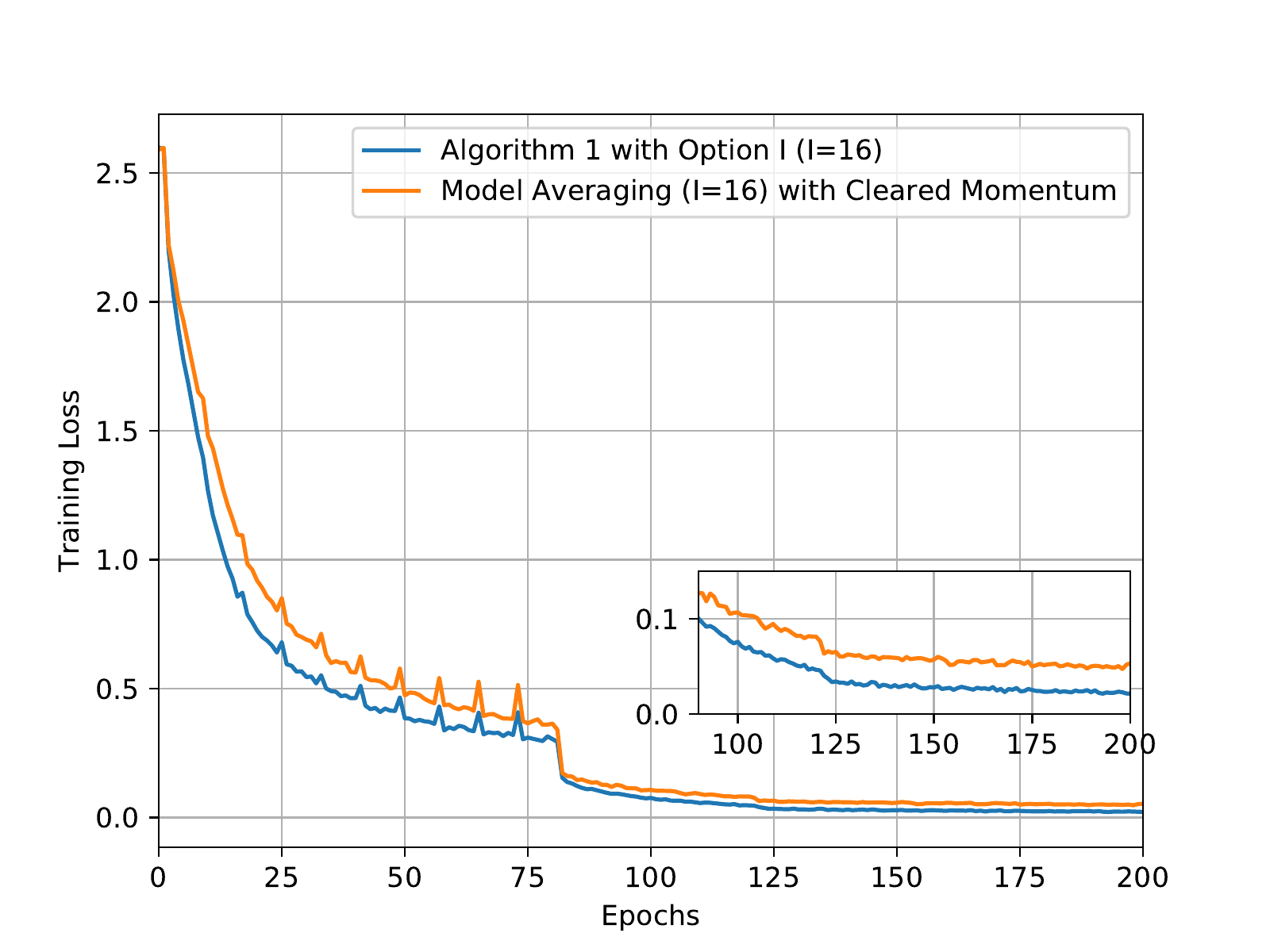}
\caption{Training loss v.s. epochs }
 \end{subfigure}
 ~
 \begin{subfigure}[t]{0.5\textwidth}
\includegraphics[height=2.5in]{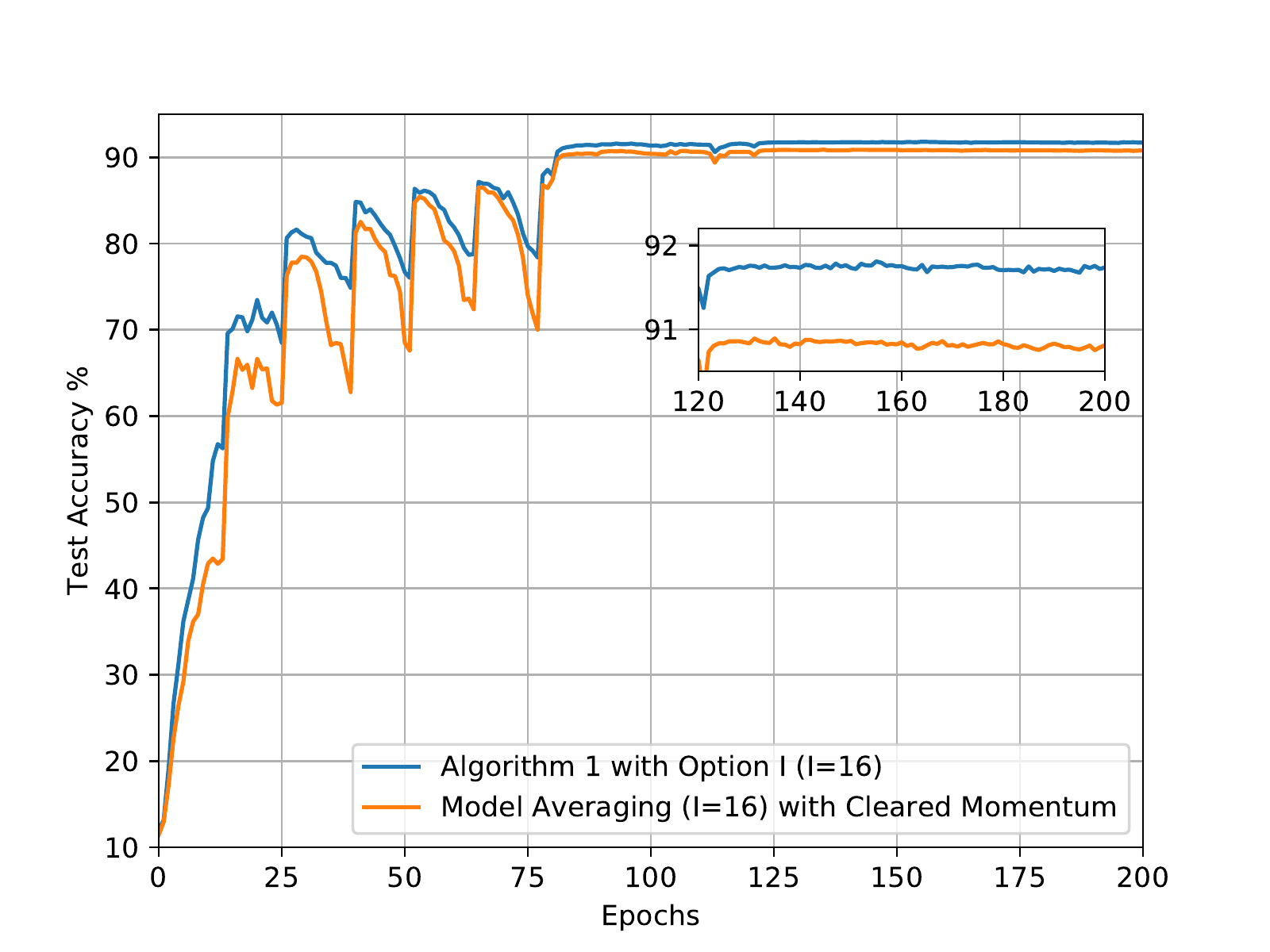}
\caption{Test accuracy v.s. epochs. }
 \end{subfigure}
  \caption{Comparisons between Algorithm \ref{alg:prsgd-momentum} and existing model averaging with momentum SGD.}
   \label{fig:naive_ma}
\end{figure*}

\subsubsection{Experiments with ImageNet} \label{sec:exp-imagenet}

\begin{figure*}[t!]
\begin{subfigure}[t]{0.5\textwidth}
\includegraphics[height=2.5in]{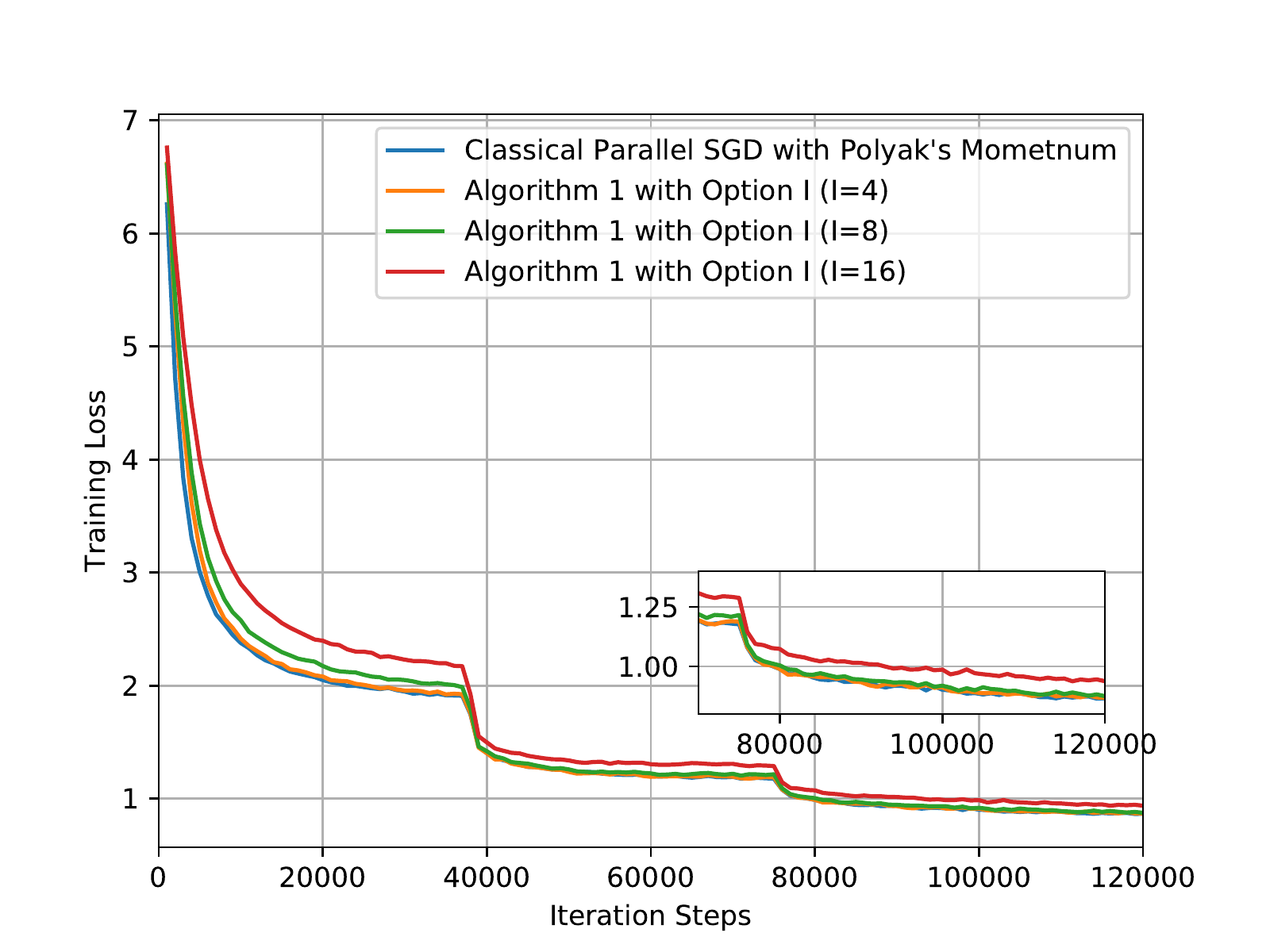}
\caption{Training loss v.s. iteration steps. }
 \end{subfigure}
 ~
 \begin{subfigure}[t]{0.5\textwidth}
\includegraphics[height=2.5in]{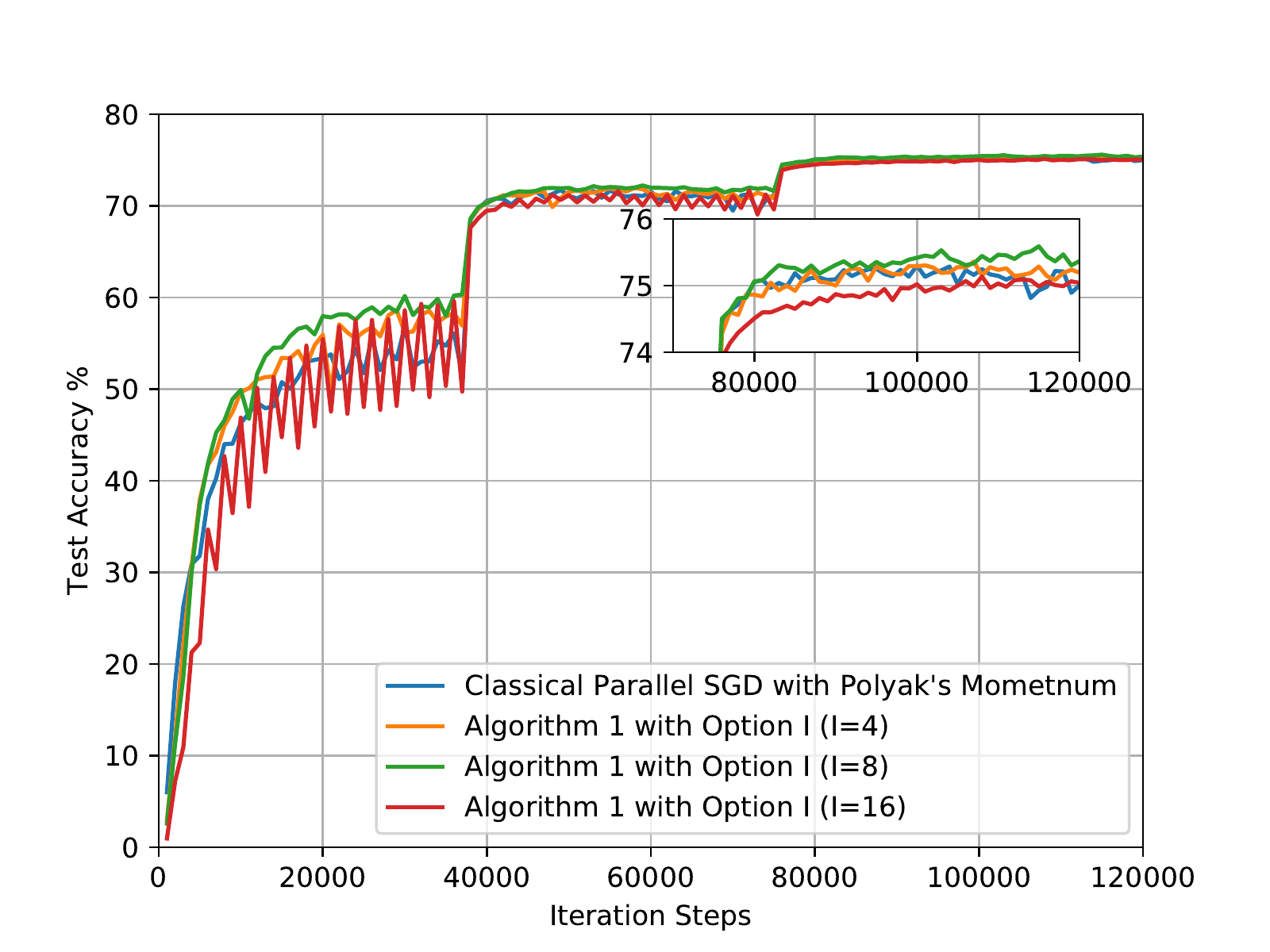}
\caption{Test accuracy v.s. iteration steps. }
 \end{subfigure}
  \caption{Algorithm \ref{alg:prsgd-momentum} with Option I: convergence v.s. iteration steps for ResNet50 over ImageNet.}
   \label{fig:imagenet-step}
\end{figure*} 

\begin{figure*}[t!]
\begin{subfigure}[t]{0.5\textwidth}
\includegraphics[height=2.5in]{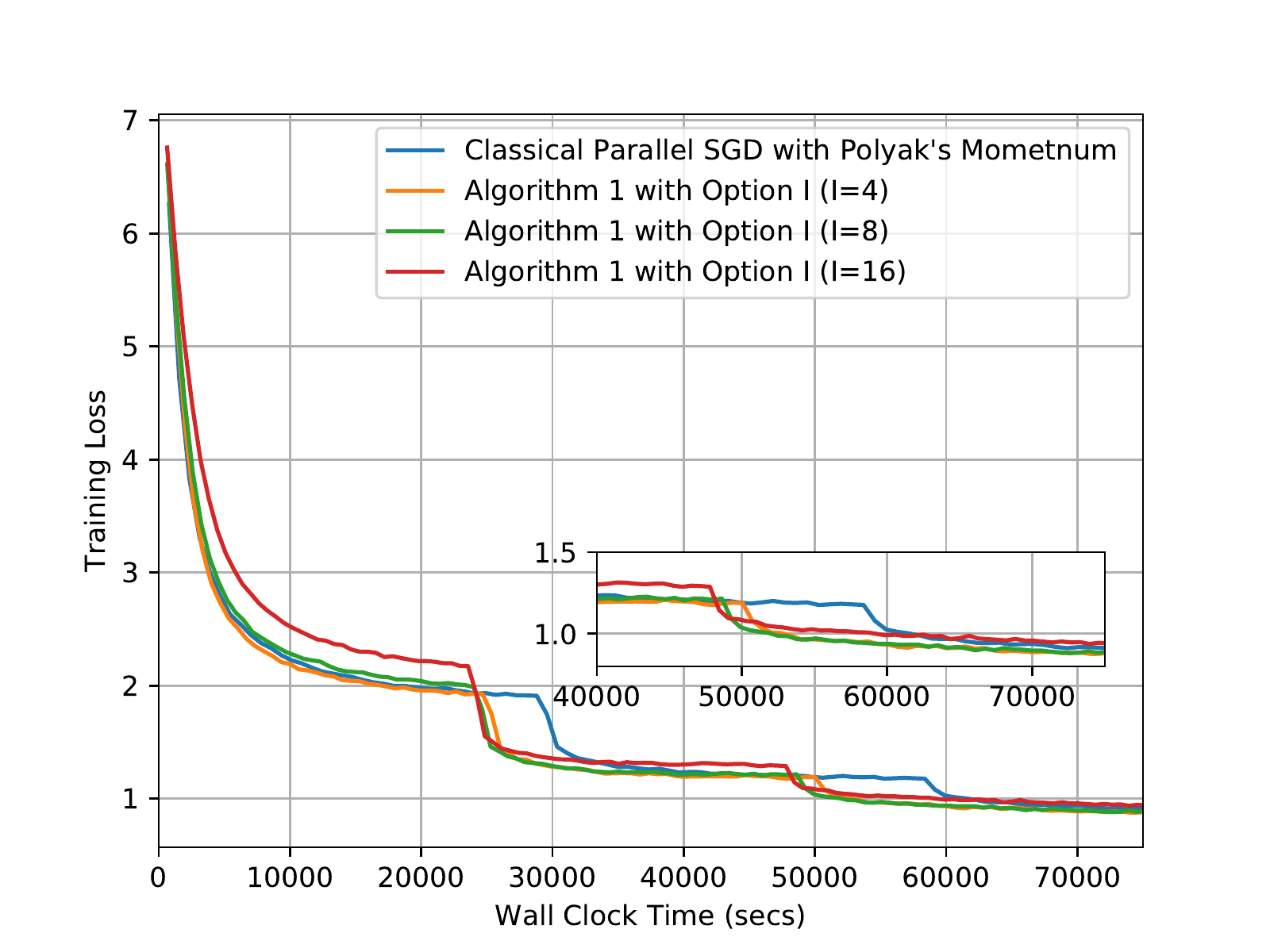}
\caption{Training loss v.s. wall clock time. }
 \end{subfigure}
 ~
 \begin{subfigure}[t]{0.5\textwidth}
\includegraphics[height=2.5in]{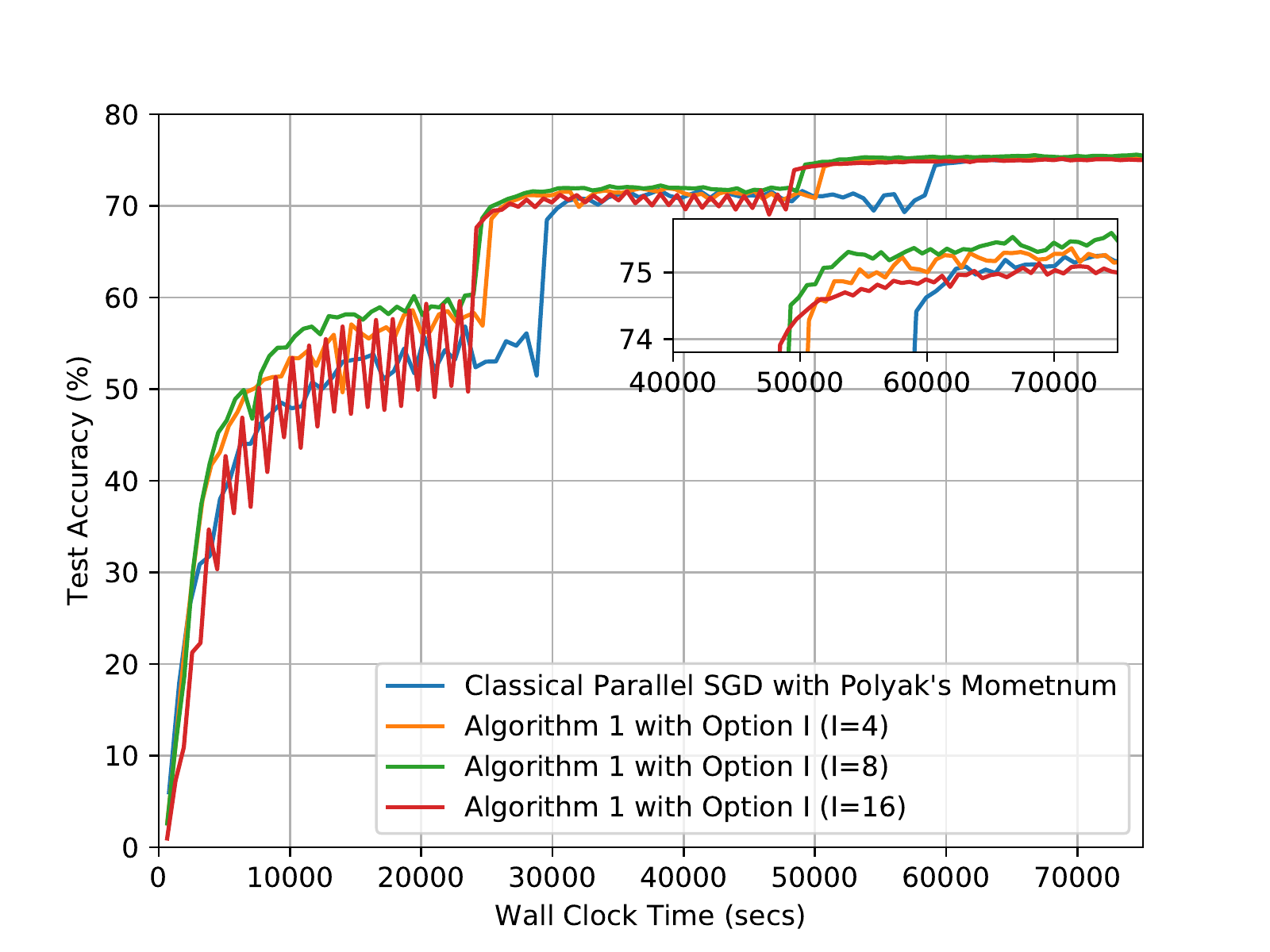}
\caption{Test accuracy v.s. wall clock time. }
 \end{subfigure}
  \caption{Algorithm \ref{alg:prsgd-momentum} with Option I: convergence v.s. wall clock time for ResNet50 over ImageNet.}
   \label{fig:imagenet-wall}
\end{figure*} 

We further verify the performance of Algorithm \ref{alg:prsgd-momentum} when used to train ResNet50 over ImageNet \cite{ImageNet}, which is an image classification task significantly harder than CIFAR10.  We run Algorithm \ref{alg:prsgd-momentum} with $I\in \{4,8,16\}$ and the classical parallel SGD with momentum on a machine with $8$ NVIDIA P100 GPUs. The local batch size at each GPU is $128$.  The learning rate is initialized to $0.2$ and is divided by $10$ when all GPUs jointly access $30$ and $60$ epochs.  The momentum coefficient is set to $0.9$. Figure \ref{fig:imagenet-step} plots the convergence of training loss and test accuracy in terms of the number of iteration steps.  This figure again verifies that Algorithm \ref{alg:prsgd-momentum} has the same convergence rate as the classical parallel momentum SGD.  To verify the benefit of skipping communication,  Figure \ref{fig:imagenet-wall} plots the convergence of training loss and test accuracy in terms of the wall clock time. 

\subsubsection{Experiments on Algorithm \ref{alg:prsgd-momentum} with Option II} \label{sec:exp-nesterov}

\begin{figure*}[t!]
\begin{subfigure}[t]{0.5\textwidth}
\includegraphics[height=2.5in]{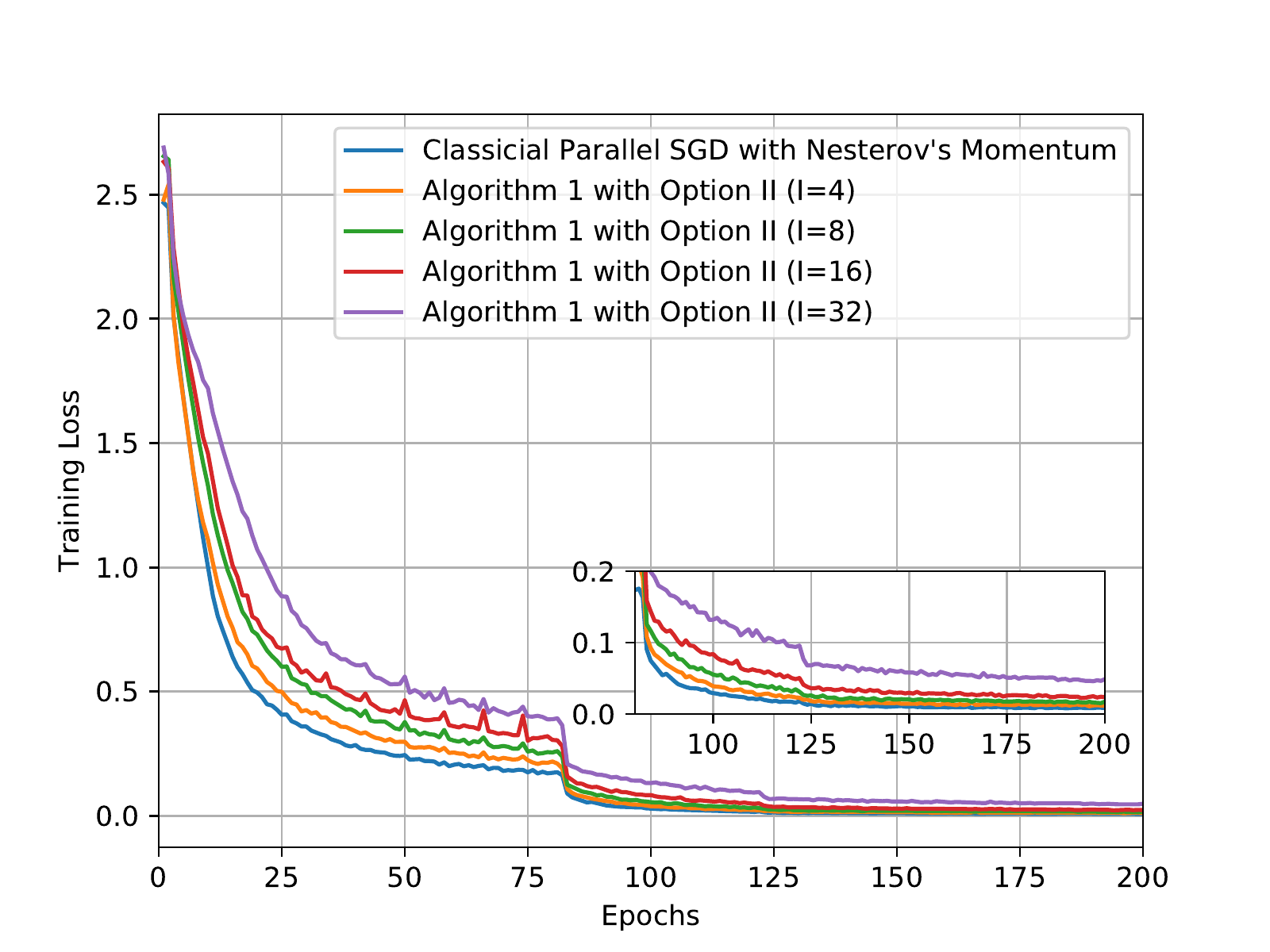}
\caption{Training loss v.s. epochs }
 \end{subfigure}
 ~
 \begin{subfigure}[t]{0.5\textwidth}
\includegraphics[height=2.5in]{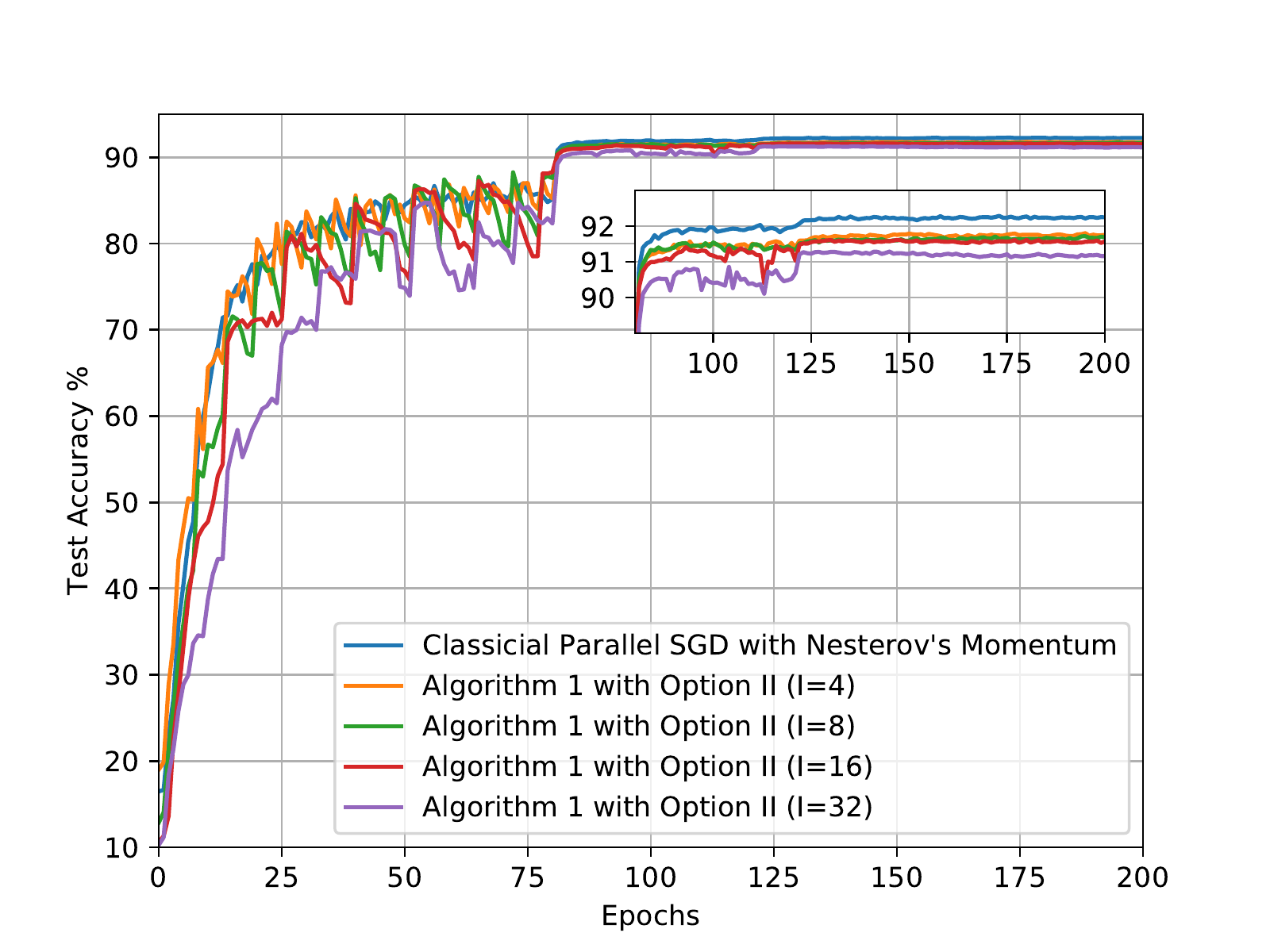}
\caption{Test accuracy v.s. epochs. }
 \end{subfigure}
  \caption{Algorithm \ref{alg:prsgd-momentum} with Option II: convergence v.s. epochs for ResNet56 over CIFAR10. }
   \label{fig:nesterov_epoch}
\end{figure*} 

\begin{figure*}[t!]
\begin{subfigure}[t]{0.5\textwidth}
\includegraphics[height=2.5in]{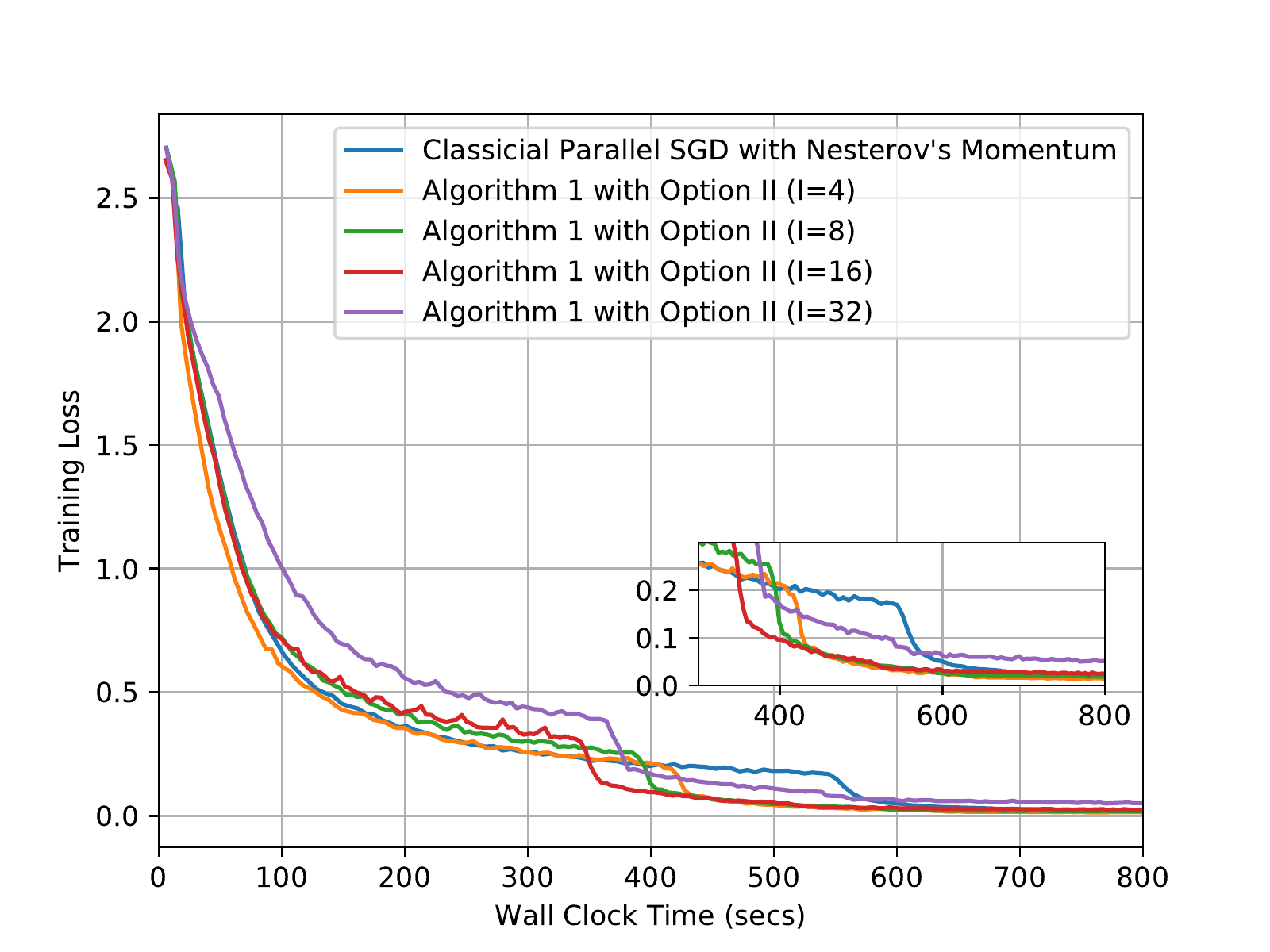}
\caption{Training loss v.s. wall clock time.}
 \end{subfigure}
 ~
 \begin{subfigure}[t]{0.5\textwidth}
\includegraphics[height=2.5in]{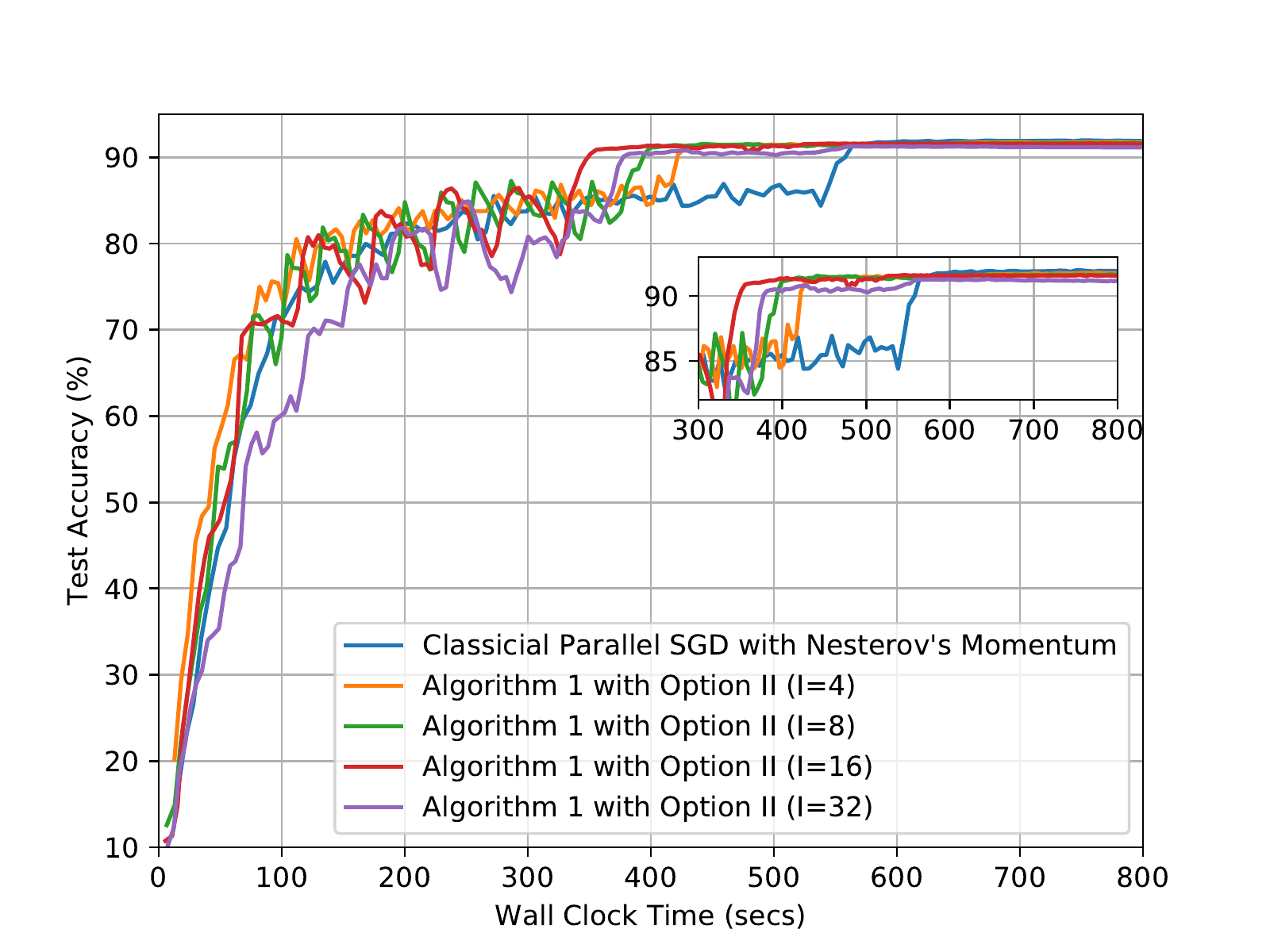}
\caption{Test accuracy v.s. wall clock time. }
 \end{subfigure}
  \caption{Algorithm \ref{alg:prsgd-momentum} with Option II: convergence v.s. wall clock time for ResNet56 over CIFAR10. }
   \label{fig:nesterov_wall}
\end{figure*}

Figures \ref{fig:nesterov_epoch} and \ref{fig:nesterov_wall} verify the convergence of Algorithm \ref{alg:prsgd-momentum} with Option II.  Their experiment configuration is identical to Figures \ref{fig:polyak_epoch} and \ref{fig:polyak_wall} except that Polyak's momentum in all algorithms is replaced with Nesterov's momentum. While Figures \ref{fig:nesterov_epoch} and \ref{fig:nesterov_wall} verify the convergence rate analysis proven in Theorem \ref{thm:nesterov-rate}, our preliminary experiments seem to suggest Nesterov's momentum is less robust to communication skipping since even a small $I$ leads to performance degradation of test accuracy.

\end{document}